\numberwithin{equation}{section}
\newtheorem{theorem}[equation]{Theorem}
\newtheorem{corollary}[equation]{Corollary}
\newtheorem{lemma}[equation]{Lemma}
\newtheorem{proposition}[equation]{Proposition}
\theoremstyle{definition}
\newtheorem{remark}[equation]{Remark}
\newcommand{\CC}{\mathbb{C}}
\newcommand{\DD}{\mathbb{D}}
\newcommand{\EE}{\mathbb{E}}
\newcommand{\GG}{\mathbb{G}}
\newcommand{\II}{\mathbb{I}}
\newcommand{\JJ}{\mathbb{J}}
\newcommand{\LL}{\mathbb{L}}
\newcommand{\NN}{\mathbb{N}}
\newcommand{\QQ}{\mathbb{Q}}
\newcommand{\RR}{\mathbb{R}}
\newcommand{\TT}{\mathbb{T}}
\newcommand{\XX}{\mathbb{X}}
\newcommand{\ZZ}{\mathbb{Z}}
\newcommand{\I}{\mathbb{I}}
\newcommand{\C}{\mathbb{C}}
\newcommand{\N}{\mathbb{N}}
\newcommand{\calF}{\mathcal{F}}
\newcommand{\calB}{\mathcal{B}}
\newcommand{\calO}{\mathcal{O}}
\newcommand{\calN}{\mathcal{N}}
\newcommand{\dif}{\mathrm{d}}
\newcommand{\ex}{\bm{e}}
\newcommand{\ind}[1]{\mathds{1}_{{#1}}}
\newcommand*{\DMO}[1]{\expandafter\DeclareMathOperator\csname #1\endcsname {#1}}
\DeclarePairedDelimiter\abs{\lvert}{\rvert}
\DeclarePairedDelimiter\norm{\lVert}{\rVert}
\DeclarePairedDelimiter\card{\lvert}{\rvert}
\DeclarePairedDelimiter\ceil{\lceil}{\rceil}
\DeclarePairedDelimiterX\spr[2]{\langle}{\rangle}{#1,#2}
\newcommand{\ipr}[2]{#1\cdot#2}
\DeclarePairedDelimiterX\Set[2]{\{}{\}}{#1\colon #2}
\DeclarePairedDelimiterX\Seq[1]{(}{)}{#1}
\def\8{\infty}
\DeclareMathOperator{\support}{supp}
\newcommand{\vertiii}[1]{{\left\lvert\kern-0.25ex\left\lvert\kern-0.25ex\left\lvert #1 
    \right\rvert\kern-0.25ex\right\rvert\kern-0.25ex\right\rvert}} 
\begin{document}

\title[Some remarks on
oscillation inequalities]{Some remarks on
oscillation inequalities}

\author{Mariusz Mirek }
\address[Mariusz Mirek]{
Department of Mathematics,
Rutgers University,
Piscataway, NJ 08854-8019, USA \\
\&  School of Mathematics,
  Institute for Advanced Study,
  Princeton, NJ 08540,
  USA
\&
Instytut Matematyczny,
Uniwersytet Wroc{\l}awski,
Plac Grunwaldzki 2/4,
50-384 Wroc{\l}aw
Poland}
\email{mariusz.mirek@rutgers.edu}

\author{Wojciech S{\l}omian}
\address[Wojciech S{\l}omian]{
	  Faculty of Pure and Applied Mathematics, 
	  Wroc\l{}aw University of Science and Technology\\
	  Wyb{.} Wyspia\'nskiego 27,
	  50-370 Wroc\l{}aw, Poland}
    \email{wojciech.slomian@pwr.edu.pl}

\author{Tomasz Z. Szarek}
\address[Tomasz Z. Szarek]{
BCAM - Basque Center for Applied Mathematics,
48009 Bilbao, Spain 
\&
Instytut Matematyczny,
Uniwersytet Wroc{\l}awski,
Plac Grunwaldzki 2,
50-384 Wroc{\l}aw,
Poland}
\email{tzszarek@bcamath.org}

\thanks{The first author was partially supported by NSF grant DMS-2154712, and by the National Science Centre
in Poland, grant Opus 2018/31/B/ST1/00204. The second author was
partially supported by the National Science Centre in Poland, grant
Opus 2018/31/B/ST1/00204.  The third author was partially supported by
the National Science Centre of Poland, grant Opus 2017/27/B/ST1/01623,
by Juan de la Cierva Incorporaci{\'o}n 2019 grant number IJC2019-039661-I  
funded by
Agencia Estatal de Investigaci{\'o}n, grant PID2020-113156GB-I00/AEI/10.13039/501100011033
and also by the
Basque Government through the BERC 2022-2025 program
and by Spanish Ministry of Sciences, Innovation and
Universities: BCAM Severo Ochoa accreditation SEV-2017-0718.}

\begin{abstract}
In this paper we establish uniform oscillation estimates on $L^p(X)$ with
$p\in(1,\infty)$ for the polynomial ergodic  averages. This result contributes to a 
certain problem about uniform oscillation bounds for ergodic averages  formulated by Rosenblatt and Wierdl in the early 1990's. We
also give a slightly different proof of the uniform oscillation inequality
of Jones, Kaufman, Rosenblatt and Wierdl for bounded
martingales. Finally, we show that oscillations, in contrast to jump
inequalities, cannot be seen as an endpoint for $r$-variation
inequalities.
\end{abstract}


\maketitle
\section{Introduction}
\subsection{Statement of the main results}
For $d, k\in\ZZ_+$ let us consider a polynomial mapping
\begin{equation}\label{polymap}
   P:=(P_1,\dots,P_{d})\colon\ZZ^k\to\ZZ^{d},
\end{equation}
where each $P_j\colon\ZZ^k\to\ZZ$ is a $k$-variate polynomial  with integer coefficients such that $P_j(0)=0$.

Let $\Omega$ be a non-empty convex body (not necessarily symmetric) in $\RR^k$, which simply means that $\Omega$ is a bounded convex open subset of $\RR^k$.
For $t>0$ we define its dilates
\begin{align}
\label{eq:60}
\Omega_t := \{x\in\RR^{k}: t^{-1}x\in\Omega\}.
\end{align}
We will additionally assume that $B(0,c_{\Omega}) \subseteq \Omega \subseteq B(0,1) \subset \RR^{k}$ for some $c_{\Omega}\in(0, 1)$, where $B(x, t)$ denotes an open Euclidean ball in $\RR^k$ centered at $x\in\RR^k$ with radius $t>0$.
This ensures that $\Omega_t\cap\ZZ^k=\{0\}$ for all $t\in(0, 1)$. A typical choice of $\Omega_{t}$ is a ball of radius $t$ associated with some norm on $\RR^{k}$.

For $t\in\XX:=[1, \infty)$, $x\in X$ and $f\in L^0(X)$ (see Section \ref{sec:notation} for appropriate definitions) we can define the corresponding ergodic polynomial averaging operator by
\begin{align}
\label{eq:59}
A_t^Pf(x):
= \frac{1}{\abs{\Omega_t\cap\ZZ^k}}
\sum_{m\in \Omega_t\cap\ZZ^k}
f\big(T_1^{P_1(m)}\cdots T_d^{P_d(m)}   x\big).
\end{align}

The first main result of this note is the following uniform oscillation ergodic theorem.

\begin{theorem}
\label{thm:main}
Let $d, k\in\ZZ_+$, a  polynomial mapping $P$ as in \eqref{polymap} and a set $\Omega_t$ as in \eqref{eq:60} be given. Let $(X, \calB(X), \mu)$ be a
$\sigma$-finite measure space endowed with a family
 of commuting  invertible measure
preserving transformations $T_1,\ldots, T_d: X\to X$. 
Then for every $p\in(1, \infty)$ there exists a constant
$C_{d, k, p, \deg P}>0$ such that for every $f\in L^p(X)$ we have
\begin{align}
\label{eq:64}
\sup_{J\in\ZZ_+}\sup_{I\in \mathfrak S_J(\XX)}
\norm[\big]{O_{I, J}^2(A_{t}^Pf:t\in\XX)}_{L^p(X)}
\le C_{d, k, p, \deg P}\norm{f}_{L^p(X)};
\end{align}
we refer to \eqref{eq:13} for the definition of oscillations.
Moreover, the implied constant in \eqref{eq:64} is independent of the
coefficients of the polynomial mapping $P$.
\end{theorem}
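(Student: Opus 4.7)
The plan is to reduce \eqref{eq:64} via the Calder\'on transference principle to the integer shift model on $\ZZ^d$, where each $A_t^P$ becomes convolution with the probability kernel $K_t=|\Omega_t\cap\ZZ^k|^{-1}\sum_{m\in\Omega_t\cap\ZZ^k}\delta_{P(m)}$, and then to analyze the resulting Fourier multipliers on $\ell^p(\ZZ^d)$ by the circle method. The oscillation seminorm will be split, for an arbitrary $I=(I_j)_{0\le j\le J}\in\mathfrak S_J(\XX)$, into a \emph{short} part collecting those indices $j$ for which $[I_j,I_{j+1})$ is contained in a single dyadic block $[2^n,2^{n+1})$, and a \emph{long} part collecting those indices whose interval crosses at least one dyadic boundary.

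The short part is dominated pointwise by the short square function $\bigl(\sum_n\sup_{s,t\in[2^n,2^{n+1})}|A_t^Pf-A_s^Pf|^2\bigr)^{1/2}$, whose $L^p$ boundedness follows from a short $2$-variation estimate: within each dyadic block the kernels $K_t$ and $K_{2^n}$ admit direct comparison via smooth truncation, and the contributions telescope in $n$. The long part is dominated by the oscillation of the lacunary family $(A_{2^n}^Pf)_{n\in\NN}$. For that dyadic oscillation I invoke the circle method, splitting each multiplier $\widehat{K_{2^n}}$ into a major-arc piece $L_n$ localized near rationals $a/q$ with $q\lesssim 2^{\tau n}$ and a minor-arc remainder $E_n$. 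Weyl-type estimates give $\|E_n\|_{\ell^2\to\ell^2}\lesssim 2^{-\delta n}$ uniformly in the coefficients of $P$, whence the minor-arc oscillation on $\ell^2$ follows by direct summation and the $L^p$ version by interpolation with trivial $\ell^p$ bounds. The major-arc contribution, after rescaling, is approximated by Gauss-sum weighted continuous averaging operators whose oscillations are known; one passes from finitely many denominators to all of them, uniformly in $I$, via Bourgain's multi-frequency inequality coupled with a Rademacher--Menchoff type logarithmic loss.

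The main obstacle will be this last step: obtaining a uniform $O^2$-oscillation bound, with constant independent of $I$. As the paper will later demonstrate, oscillations at the critical exponent cannot be derived from an $r$-variation inequality with $r>2$; one must exploit the intrinsic $\ell^2$-structure of $O^2$ by pairing each supremum $\sup_{t\in[I_j,I_{j+1})}$ with its endpoint value $A_{I_j}^Pf$ and deriving the oscillation estimate from an associated jump inequality, a passage which at the critical exponent is itself delicate. A secondary difficulty is keeping $C_{d,k,p,\deg P}$ independent of the coefficients of $P$, which forces the Weyl estimates and the multi-frequency machinery to be carried out in a coefficient-free form.
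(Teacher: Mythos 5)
Your overall architecture — Calder\'on transference, a long/short splitting of the oscillation, the circle method with Weyl estimates on the minor arcs and Gauss-sum approximations plus a Rademacher--Menshov argument on the major arcs — is the same as the paper's. But two steps of your plan, as stated, would fail.

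First, the short part. You split along plain dyadic blocks $[2^n,2^{n+1})$ and claim the kernels $K_t$, $K_{2^n}$ "admit direct comparison" with contributions that "telescope in $n$". The direct $\ell^1$ comparison gives $\norm{K_t-K_{2^n}}_{\ell^1}\lesssim (t-2^n)2^{-n}$, which is $O(1)$ over a full dyadic block, so the resulting square function $\bigl(\sum_n\sup_{t\in[2^n,2^{n+1})}|A^P_tf-A^P_{2^n}f|^2\bigr)^{1/2}$ picks up one unit per block and diverges. The paper avoids this by splitting along the subexponential scales $2^{n^\tau}$, $\tau<1$ (see \eqref{eq:70}): the lattice-point count then yields a short-variation bound of order $n^{\tau-1}$ per block, which is summable as in \eqref{eq:8}. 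With genuinely dyadic blocks you would instead need an $L^2$-smoothing argument; the "direct comparison" you invoke is not enough.

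Second, and more seriously, your route to the critical $O^2$ bound is to "deriv[e] the oscillation estimate from an associated jump inequality". This is exactly the implication that the paper's Theorem \ref{thm:counter} and the remark following Lemma \ref{lem:CEvarjum} rule out: uniform $\lambda$-jump bounds at exponent $2$ do \emph{not} imply $O^2$ oscillation bounds in the strong $L^p$ sense (only weak-type variational information for exponents strictly above $2$ can be extracted from jumps). The endpoint input has to be an oscillation inequality proved directly: the paper reduces the large-scale major-arc piece to the continuous inequality \eqref{eq:46} and then, via square-function/Littlewood--Paley arguments, to the uniform martingale oscillation inequality of Jones--Kaufman--Rosenblatt--Wierdl (Proposition \ref{prop:1}), which is proved by a weighted Doob inequality plus Khintchine, not by passing through jumps. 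Relatedly, for $p\neq 2$ the "multi-frequency" step cannot rest on Bourgain's $L^2$ lemma; one needs the vector-valued Ionescu--Wainger theory (Theorem \ref{thm:IW}) together with the Magyar--Stein--Wainger sampling principle to transfer the continuous oscillation and square-function estimates to $\ell^p(\ZZ^\Gamma)$ for all $p\in(1,\infty)$.
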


We now give some remarks about Theorem \ref{thm:main}.

\begin{enumerate}[label*={\arabic*}.]

\item Theorem \ref{thm:main} is a contribution to a problem from the
early 1990's of Rosenblatt and Wierdl \cite[Problem 4.12, p. 80]{RW}
about uniform estimates of oscillation inequalities for ergodic
averages. This problem has a long and interesting history, which we
briefly describe below.

\item Inequality \eqref{eq:64} is a useful tool, as it was shown by
Bourgain \cite{B1, B2, B3}, in establishing pointwise convergence
for operators \eqref{eq:59}. Inequality \eqref{eq:64} also  implies, in view of
\eqref{eq:63}, that for all $p\in(1, \infty]$, there exists a constant $C_{d, k, p, \deg P}>0$ (with
$C_{d, k, \infty, \deg P}=1$ for $p=\infty$), such that for every $f\in L^p(X)$ we have
\begin{align}
\label{eq:73}
\norm[\big]{\sup_{t\in\XX}\abs{A_t^Pf}}_{L^p(X)}
\le C_{d, k, p, \deg P}\norm{f}_{L^p(X)}.
\end{align}
The constant in \eqref{eq:73} is also independent of the coefficients
of the polynomial mapping $P$.

\item A non-uniform variant of inequality \eqref{eq:64} for one
dimensional averages \eqref{eq:59} with $d=k=1$ was established by
Bourgain in \cite{B1, B2, B3}. More precisely, Bourgain proved that
for any $\tau>1$, any sequence of integers $I=(I_j:{j\in\NN})$ such
that $I_{j+1}>2I_j$ for all $j\in\NN$, and any $f\in L^2(X)$ one has
\begin{align}
\label{eq:17}
\norm[\big]{O_{I, J}^2(A_{\tau^n}^Pf:n\in\NN)}_{L^2(X)}
\le C_{I, \tau}(J)\norm{f}_{L^2(X)}, \qquad J\in\ZZ_+,
\end{align}
where $C_{I, \tau}(J)$ is a constant depending on $I$ and $\tau$ and
such that $\lim_{J\to\infty} J^{-1/2}C_{I, \tau}(J)=0$.
Interestingly, this non-uniform inequality \eqref{eq:17} suffices to establish
pointwise convergence of the averaging operators from \eqref{eq:59} for any  $f\in L^2(X)$, see \cite{B1, B2, B3}.

\item Not long afterwards, Lacey refined Bourgain's argument
\cite[Theorem 4.23, p. 95]{RW}, and showed that for every $\tau>1$
there is a constant $C_{\tau}>0$ such that for any $f\in L^2(X)$ one
has
\begin{align}
\label{eq:18}
\sup_{J\in\ZZ_+}\sup_{I\in \mathfrak S_J(\LL_{\tau})}\norm[\big]{O_{I, J}^2(A_{t}^Pf:t\in\LL_{\tau})}_{L^2(X)}
\le C_{\tau}\norm{f}_{L^2(X)},
\end{align}
where $\LL_{\tau}:=\{\tau^n:n\in \NN\}$. This was the first uniform
oscillation result in the class of $\tau$-lacunary sequences.  Lacey's observation
naturally  motivated a question about uniform estimates, independent of $\tau>1$, of
oscillation inequalities  in \eqref{eq:18},
which for the Birkhoff averages was explicitly formulated in
\cite[Problem 4.12, p. 80]{RW}.

\item In the groundbreaking paper of Jones, Kaufman, Rosenblatt and
Wierdl \cite{jkrw} the authors established Theorem \ref{thm:main} for
the classical Birkhoff averages with $d=k=1$ and $P_1(n)=n$ giving
affirmative answer to \cite[Problem 4.12, p. 80]{RW}. Here our aim
will be to show that \cite[Problem 4.12, p. 80]{RW} remains true for
Bourgain's polynomial ergodic averages even in the multidimensional setting
as in \eqref{eq:59}.

\item Finally we mention that a non-uniform variant of Theorem
\ref{thm:main} was in fact established in \cite{MST2} (see also
\cite{MSZ3}). Specifically, H{\"o}lder's inequality and inequality \eqref{eq:62} and  $r$-variational estimates for $r>2$ (see definition
\eqref{eq:67}) established in \cite{MST2, MSZ3} yield that for every
$p\in(1, \infty)$ there is a constant $C_p>0$ such that for any $r>2$ and every $f\in L^p(X)$ one has
\begin{align*}
\sup_{I\in \mathfrak S_J(\XX)}
\norm[\big]{O_{I, J}^2(A_{t}^Pf:n\in\XX)}_{L^p(X)}
\le C_p\frac{r}{r-2} J^{\frac{1}{2}-\frac{1}{r}} \norm{f}_{L^p(X)}, \qquad J\in\ZZ_+.
\end{align*}

\item The proof of Theorem \ref{thm:main} will be an elaboration of
methods developed in \cite{MST2, MSZ3}.  The main tools are the
Hardy--Littlewood circle method (major arcs estimates Proposition
\ref{aprox}, lattice points estimates Proposition
\ref{prop:lattice-boundary} and Weyl's inequality from Theorem
\ref{thm:weyl-sums}), the Ionescu--Wainger multiplier theory (Theorem
\ref{thm:IW}, see also \cite{IW}), the Rademacher--Menshov argument (inequality \eqref{eq:69}, see also \cite{MSZ2})
and the sampling principle of Magyar--Stein--Wainger (Proposition
\ref{prop:msw}, see also \cite{MSW}). The details are presented in Section \ref{sec:Radon},
where we closely follow the exposition from \cite{MSZ3}. Another
important ingredient of the proof of Theorem \ref{thm:main} is a
uniform oscillation inequality for martingales. Although this
inequality was originally proved in \cite[Theorem 6.4, p. 930]{jkrw}, a slightly different proof is presented in Section
\ref{sec:notation}, see
Proposition \ref{prop:1} for the details.
\end{enumerate}

The second main theorem of this paper is the following counterexample.
\begin{theorem} \label{thm:counter} Let $1 \le p < \infty$ and
$1 < \rho \le r < \infty$ be fixed. It is \textbf{not true} that the
estimate
\begin{align}
\label{eq:10}
\sup_{\lambda > 0} \| \lambda N_{\lambda}(f(\cdot,t):t\in\NN)^{1/r} \|_{\ell^{p,\infty}(\ZZ)} 
\le
C_{p,\rho,r}
\sup_{I\in\mathfrak S_\infty (\NN)} 
 \| O_{I,\infty}^{\rho}(f(\cdot,t):t\in\NN) \|_{\ell^{p}(\ZZ)}
\end{align}
holds uniformly for every measurable function
$f \colon \ZZ \times \NN \to \RR$.

As a consequence of \eqref{eq:10} the following estimate 
\begin{align}
\label{eq:19}
\| V^r(f(\cdot,t):t\in\NN) \|_{\ell^{p,\infty}(\ZZ)} 
\le
C_{p,\rho,r}
\sup_{I\in\mathfrak S_\infty (\NN)} 
\| O_{I,\infty}^{\rho} (f(\cdot,t):t\in\NN) \|_{\ell^{p}(\ZZ)}
\end{align}
\textbf{cannot hold} uniformly for all measurable functions
$f \colon \ZZ \times \NN \to \RR$. We refer to Section
\ref{sec:notation} for definitions of $\rho$-oscillations
\eqref{eq:13}, $r$-variations \eqref{eq:67} and $\lambda$-jumps
\eqref{eq:71}.
\end{theorem}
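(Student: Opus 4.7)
The plan is to construct, for each integer $n\ge 1$, an explicit test function $f_n \colon \ZZ\times\NN \to\RR$ witnessing the failure of \eqref{eq:10}, and then to deduce the failure of \eqref{eq:19} from the trivial pointwise inequality $V^r(f(\cdot,t):t\in\NN) \ge \sup_{\lambda>0}\lambda N_\lambda(f(\cdot,t):t\in\NN)^{1/r}$. The structural observation driving the whole argument is the different ordering of ``sup over partitions'' and ``$\ell^p$-norm in the spatial variable'' on the two sides of \eqref{eq:10}: on the left each $x\in\ZZ$ is free to exploit its own scale $\lambda$ and its own intrinsic sequence of $\lambda$-jumps pointwise, whereas on the right a single partition $I\in\mathfrak S_\infty(\NN)$ must be fixed \emph{before} the $\ell^p$-norm in $x$ is taken.

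Accordingly, I would design $f_n$ so that at each $x$ in the spatial support the one-dimensional slice $t\mapsto f_n(x,t)$ carries a prescribed packet of well-separated jumps, yielding a pointwise lower bound $\sup_{\lambda>0}\lambda N_\lambda(f_n(x,\cdot))^{1/r}\ge c\,M_n$ with $M_n\to\infty$, while the \emph{positions} of these jumps are shuffled across $x$ in such a way that no single partition $I$ can be matched to more than a small fraction of the $x$'s simultaneously. Concretely, I would split the spatial support of $f_n$ into classes indexed by a time scale and arrange that for each class the slice $f_n(x,\cdot)$ is resonant only with partitions whose local mesh matches that class's scale; because the number of classes grows with $n$, any global $I$ can be resonant with only a bounded number of classes at once.

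With such $f_n$ in hand, the lower bound $\|\sup_{\lambda>0}\lambda N_\lambda(f_n(\cdot,t):t\in\NN)^{1/r}\|_{\ell^{p,\infty}(\ZZ)}\ge c\,M_n\,N_n^{1/p}$ follows immediately from the pointwise construction, where $N_n$ is the cardinality of the spatial support. For the matching upper bound on $\sup_I \|O^{\rho}_{I,\infty}(f_n(\cdot,t):t\in\NN)\|_{\ell^p(\ZZ)}$ I would use a budget-type counting argument: for a fixed $I$, estimate the contribution of each interval $[I_j,I_{j+1})$ to the $\ell^p$-sum by counting the $x$'s whose resonant scale is matched by the length $I_{j+1}-I_j$, and then sum over $j$ to get an upper bound of the form $C M_n' N_n^{1/p}$ with $M_n'/M_n\to 0$ as $n\to\infty$. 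This contradicts \eqref{eq:10}, and the reduction above then contradicts \eqref{eq:19} as well.

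The main obstacle I expect is the uniformity in $I$ of the oscillation estimate: the bound must hold for \emph{every} admissible $I\in\mathfrak S_\infty(\NN)$, and not just for a preferred one, so the budget argument has to be arranged so that no adversarial placement of the breakpoints $I_j$ can recover the lost factor $M_n/M_n'$. A secondary technical point is that the left-hand side of \eqref{eq:10} involves the weak-type norm $\ell^{p,\infty}(\ZZ)$; this is handled by taking $f_n$ essentially simple (a sum of indicators of isolated spikes), so that the pointwise jump functional is concentrated on a single level and its weak and strong $\ell^p$-norms are of the same order.
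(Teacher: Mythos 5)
Your overall strategy --- a multi-scale family of test functions plus a counting (``budget'') argument against an arbitrary partition $I$, followed by the trivial reduction of \eqref{eq:19} to \eqref{eq:10} via $V^r\ge\sup_{\lambda>0}\lambda N_\lambda^{1/r}$ --- is the same as the paper's, and your diagnosis of the quantifier-order issue is correct. However, the two concrete mechanisms you propose are exactly where the difficulty sits, and as described they would fail. First, ``shuffling the positions'' of unit-size jumps across $x$ gains nothing: a single partition of uniform mesh $2$ (blocks $[2m,2m+2)$) detects every tooth of every $0,1,0,1,\dots$ sawtooth regardless of its phase, since $\sup_{t\in[I_j,I_{j+1})}|f(t)-f(I_j)|=1$ as soon as a block contains two consecutive times with different values; so one partition is simultaneously ``resonant'' with all your classes. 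The resource a partition must actually spend is not phase but \emph{block length}. In the paper the slice at the spatial point $(2^k,n)$ is a sawtooth of unit height but slope $2^{-k}$, so that detecting a move of size $2^{-N}$ there forces a gap $I_{l+1}-I_l\ge 2^{k-N}$, while the total budget $\sum_l(I_{l+1}-I_l)$ equals the time horizon $2^{M+j}$ (this is the content of Lemma~\ref{lem:2}). It is the variation of \emph{rates} across spatial points, not of positions, that makes long blocks a scarce global resource.

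Second, your suggestion to make ``the pointwise jump functional concentrated on a single level'' would destroy the counterexample in the extremal case $\rho=r$, which the theorem must cover. If each point carries $K$ jumps of one height $h$, the left-hand side is $\simeq hK^{1/r}N^{1/p}$, while a partition adapted to that single scale yields oscillation $hK^{1/\rho}=hK^{1/r}$ per point: no gain whatsoever. In the paper the entire gain for $\rho=r$ comes from the \emph{number of scales}: the superlevel sets of $N_{2^{-N}}f_{j,M}$ aggregate about $M$ scales into the weak norm, producing the extra factor $(M+1)^{1/p}$ in \eqref{est30}, whereas the budget bound of Lemma~\ref{lem:2} shows that a single $I$ can resonate with only a bounded amount of measure across scales; even then the contradiction requires the delicate joint choice $M\approx 2^{100j}$ and a case analysis in $p$ versus $\rho$. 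As written, your plan gives no source of gain when $\rho=r$, so there is a genuine gap at the heart of the argument.
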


Theorem \ref{thm:counter} states, in particular, that
$\rho$-oscillation \eqref{eq:13} inequalities cannot be seen (at least
in a straightforward way) as endpoint estimates for $r$-variations
\eqref{eq:67}. It also shows that $\rho$-oscillations are incomparable
with uniform $\lambda$-jumps \eqref{eq:71}.  Our motivation to study
inequalities \eqref{eq:10} and \eqref{eq:19} from Theorem
\ref{thm:counter} arose from the desire of better understanding
relations between $\rho$-oscillations, $r$-variations and
$\lambda$-jumps.  We now use martingales to illustrate these
relations.

The $r$-variations \eqref{eq:67} for a family of bounded martingales
$\mathfrak f=(\mathfrak f_n:X\to \CC:n\in\ZZ_+)$ were studied by L{\'e}pingle
\cite{Lep} who showed that for all $r\in(2, \infty)$ and
$p\in(1, \infty)$ there is a constant $C_{p, r}>0$ such that the following inequality
\begin{align}
\label{eq:150}
\norm{V^r(\mathfrak f_n: n\in\ZZ_+)}_{L^p(X)}
\le C_{p, r}\sup_{n\in\ZZ_+}\norm{\mathfrak f_n}_{L^p(X)}
\end{align}
holds with sharp ranges of exponents, see also \cite{JG} for a
counterexample at $r=2$.  In \cite{Lep} a weak type $(1, 1)$ variants of the
inequality \eqref{eq:150} were proved as well. Inequality
\eqref{eq:150} is an extension of Doob's maximal inequality for
martingales and gives a quantitative form of the martingale
convergence theorem.  We also refer to \cite{PX,B3,MSZ1} for
generalizations and different proofs of \eqref{eq:150}.

Bourgain rediscovered inequality \eqref{eq:150} in his seminal paper
\cite{B3}, where it was used to address the issue of pointwise
convergence of ergodic-theoretic averages along polynomial
orbits. This initiated systematic studies of $r$-variations in
harmonic analysis and ergodic theory, which resulted in a vast
literature \cite{JR1,jkrw,jsw,JG, MST2, MSZ1, MSZ2, MSZ3}.  In
applications in analysis and ergodic theory only $r > 2$ and $p > 1$
matter, and in fact this is the best what we can expect due to the
L{\'e}pingle inequality.

It is not difficult to see that for any sequence of measurable
functions $(\mathfrak{a}_n: n\in\ZZ_+)\subseteq \CC$ one has
\begin{align}
\label{eq:24}
\sup_{\lambda>0}\|\lambda N_{\lambda}(\mathfrak{a}_n: n\in\ZZ_+)^{1/r}\|_{L^p(X)}
\le \|V^r(\mathfrak{a}_n: n\in\ZZ_+)\|_{L^p(X)},
\end{align}
see \eqref{eq:72}. Therefore \eqref{eq:24} combined with \eqref{eq:150} imply jump inequalities for martingales 
for any $r>2$. But as it was first shown  by Pisier and Xu \cite{PX} on $L^2(X)$ and by Bourgain \cite[inequality (3.5)]{B3}
on $L^p(X)$ with $p\in(1, \infty)$  endpoint estimates for $r=2$ are also true. More precisely, for every $p\in(1, \infty)$
there exists a constant $C_p>0$ such that
\begin{align}
\label{eq:25}
\sup_{\lambda>0}\|\lambda N_{\lambda}(\mathfrak{f}_n: n\in\ZZ_+)^{1/2}\|_{L^p(X)}
\le C_p\sup_{n\in\ZZ_+}\norm{\mathfrak f_n}_{L^p(X)}.
\end{align}

A remarkable feature of Bourgain's \cite{B3} approach was based on the observation that inequality \eqref{eq:24} can be reversed in the sense that for every $p\in[1,\infty]$ and $1\le \rho<r\le\infty$ one has
\begin{align}
\label{eq:26}
\| V^{r}( \mathfrak{a}_n : n \in \ZZ_+ ) \|_{L^{p,\infty} (X)}
\lesssim_{p,\rho,r}
\sup_{\lambda>0} \| \lambda N_{\lambda}(\mathfrak{a}_n: n\in \ZZ_+)^{1/\rho} \|_{L^{p,\infty}(X)}.
\end{align}
In Lemma \ref{lem:CEvarjum} it is  shown that one cannot replace $L^{p,\8} (X)$  with $L^{p} (X)$ in \eqref{eq:26}.
In fact, \eqref{eq:25} and \eqref{eq:26} allowed Bourgain \cite{B3} to
recover L{\'e}pingle's inequality \eqref{eq:150}.  Inequality
\eqref{eq:26} is a very striking result (see also \eqref{estt1}) which
states that a priori uniform $\lambda$-jump estimates corresponding to
some $\rho\in[1, \infty)$ and $p\in[1, \infty]$ imply weak type
$r$-variational estimates for the same range of $p$ and for any
$r\in(\rho, \infty]$. Therefore, uniform $\lambda$-jump estimates can
be thought of as endpoint estimates for $r$-variations, even though
the $r$-variations may be unbounded at the endpoint in question, as we
have seen in the context of L{\'e}pingle inequality \eqref{eq:150}
with $r=2$.

This gives us a fairly complete picture of relations between
$r$-variations and $\lambda$-jumps, which immediately lead to a
question about similar phenomenona between $\rho$-oscillations and
$r$-variations as well as $\lambda$-jumps.  This problem  has been undertaken in Theorem
\ref{thm:counter} and arose from the following two
observations. On the one hand, for any $r\ge1$ and any sequence of measurable
functions $(\mathfrak{a}_n: n\in\ZZ_+)\subseteq \CC$ one has
\begin{align}
\label{eq:27}
\sup_{J\in\ZZ_+}\sup_{I\in\mathfrak{S}_{J}(\ZZ_+)}\|O^{r}_{I, J}( \mathfrak{a}_n : n \in \ZZ_+ ) \|_{L^{p} (X)}\le \| V^{r}( \mathfrak{a}_n : n \in \ZZ_+ ) \|_{L^{p} (X)},
\end{align}
which follows from \eqref{eq:62}. Thus \eqref{eq:150} combined with \eqref{eq:27} gives
bounds of $r$-oscillations for martingales on $L^p(X)$ for all 
 $r\in(2, \infty)$ and
$p\in(1, \infty)$.
On the other hand,  it was shown by Jones, Kaufman, Rosenblatt and Wierdl \cite[Theorem 6.4, p. 930]{jkrw} that for every $p\in(1, \infty)$ there is a constant $C_p>0$ such that
\begin{align}
\label{eq:29}
\sup_{J\in\ZZ_+}\sup_{I\in\mathfrak{S}_{J}(\ZZ_+)}\|O^{2}_{I, J}( \mathfrak{f}_n : n \in \ZZ_+ ) \|_{L^{p} (X)}
\le C_{p}\sup_{n\in\ZZ_+}\norm{\mathfrak f_n}_{L^p(X)}.
\end{align}
A slightly different proof of this inequality is given in Proposition
\ref{prop:1}.

Inequalities \eqref{eq:27} and \eqref{eq:29} exhibit a similar
phenomenon to the one that we have seen above in the case of
$\lambda$-jumps (see \eqref{eq:24} and \eqref{eq:25}), where
$2$-variations for martingales explode on $L^p(X)$, but corresponding
$\lambda$-jumps (see inequality \eqref{eq:25}); and oscillations (see
inequality \eqref{eq:29}) are bounded.

This observation gave rise to a natural question whether
$2$-oscillation can be interpreted as an endpoint for $r$-variations
for any $r>2$ in the sense of inequality \eqref{eq:26}.  Theorem
\ref{thm:counter} provides an answer in the negative. A detailed proof
of Theorem \ref{thm:counter} is given in Section \ref{sec:basic},
where a concept of the sequential jump counting function has been
introduced, see \eqref{eq:15}. The sequential jumps can be thought of
as analogues of classical jumps \eqref{eq:71} adjusted to
$\rho$-oscillations, see for instance \eqref{eq:200} and Lemma
\ref{lem:oscmodjum}.  Theorem \ref{thm:counter} also shows that the
space induced by $\rho$-oscillations is different from the spaces
induced by $r$-variations and $\lambda$ jumps corresponding to the
parameter $\rho\le r$.

Even though Theorem \ref{thm:counter} shows that $\rho$-oscillation
inequalities cannot be seen (at least in a straightforward way
understood in the sense of inequality \eqref{eq:26}) as endpoint
estimates for $r$-variations, it is still natural to ask whether a
priori bounds for $2$-oscillations imply bounds for $r$-variations for
any $r>2$. It is an intriguing question from the point of view of
pointwise convergence problems. If it were true it would reduce
pointwise convergence problems to study $2$-oscillations, which in
certain cases are simpler as they are closer to square functions.

The paper is organized as follows. In Section \ref{sec:notation} we
set notation, and collect some important facts about oscillations,
variations and jumps as well as prove Proposition \ref{prop:1}. In
Section \ref{sec:Radon} we give a proof of Theorem
\ref{thm:main}. Finally in Section \ref{sec:basic} we prove our
counterexamples from Theorem \ref{thm:counter}.
\section{Notation and basic tools}\label{sec:notation}

We now set up notation and terminology that will be used throughout
the paper. We also gather basic  properties of jumps as well as
oscillation and variation semi-norms that will be used later.

\subsection{Basic notation}
We denote $\ZZ_+:=\{1, 2, \ldots\}$ and $\NN:=\{0,1,2,\ldots\}$. For
$d\in\ZZ_+$ the sets $\ZZ^d$, $\RR^d$, $\CC^d$ and
$\TT^d:=\RR^d/\ZZ^d$ have a standard meaning.  
For any $x\in\RR$ we will
use the floor function
\[
\lfloor x \rfloor: = \max\{ n \in \ZZ : n \le x \}.
\]
We denote $\RR_+:=(0, \infty)$ and $\XX:=[1, \infty)$, and
for every $N\in\RR_+$ we set
\[
[N]:=(0, N]\cap\ZZ=\{1, \ldots, 
\lfloor N\rfloor\},
\]
and we will also write
\begin{align*}
\NN_{\le N}:= [0, N]\cap\NN,\ \: \quad &\text{ and } \quad
\NN_{< N}:= [0, N)\cap\NN,\\
\NN_{\ge N}:= [N, \infty)\cap\NN, \quad &\text{ and } \quad
\NN_{> N}:= (N, \infty)\cap\NN.
\end{align*}
For $\tau\in(0, 1)$ and $u\in\ZZ_+$ we define sets
    \begin{align*}
        \DD_{\tau}:=\{2^{n^\tau}:n\in\NN\}\quad\text{ and }\quad 2^{u\ZZ_+}:=\{2^{un}\colon n\in\ZZ_+\}.
    \end{align*}

We use $\ind{A}$ to denote the indicator function of a set $A$. If $S$ 
is a statement we write $\ind{S}$ to denote its indicator, equal to $1$
if $S$ is true and $0$ if $S$ is false. For instance $\ind{A}(x)=\ind{x\in A}$.

For two nonnegative quantities $A, B$ we write $A \lesssim B$ if there
is an absolute constant $C>0$ such that $A\le CB$. However, the
constant $C$ may change from line to line. If
$A \lesssim B\lesssim A$, then we write $A \simeq B$.  We will write
$\lesssim_{\delta}$ or $\simeq_{\delta}$ to indicate that the
implicit constant depends on $\delta$. For two functions $f:X\to \CC$ and
 $g:X\to [0, \infty)$, we write $f = \mathcal{O}(g)$ if
there exists a constant $C>0$ such that $|f(x)| \le C g(x)$ for all
$x\in X$. We will  write $f = \mathcal{O}_{\delta}(g)$ if the implicit
constant depends on $\delta$.

\subsection{Euclidean spaces} The standard inner product, the corresponding Euclidean norm, and the maximum norm on $\RR^d$ are denoted respectively, for any $x=(x_1,\ldots, x_d)$, $\xi=(\xi_1, \ldots, \xi_d)\in\RR^d$, by 
\begin{align*}
x\cdot\xi:=\sum_{k=1}^dx_k\xi_k, \qquad \text{ and } \qquad
\abs{x}:=\abs{x}_2:=\sqrt{\ipr{x}{x}}, \qquad \text{ and } \qquad |x|_{\infty}:=\max_{k\in[d]}|x_k|.
\end{align*}

For any multi-index $\gamma=(\gamma_1,\dots,\gamma_k)\in\N^k$, by
abuse of notation we will write
$|\gamma|:=\gamma_1+\ldots+\gamma_k$. This will never cause confusions
since the multi-indices will be always denoted by Greek letters.

Throughout the paper the $d$-dimensional torus $\TT^d$ is a priori endowed with the   periodic norm
\begin{align}
\label{eq:99}
\norm{\xi}:=\Big(\sum_{k=1}^d \norm{\xi_k}^2\Big)^{1/2}
\qquad \text{for}\qquad
\xi=(\xi_1,\ldots,\xi_d)\in\TT^d,
\end{align}
where $\norm{\xi_k}=\dist(\xi_k, \ZZ)$ for all $\xi_k\in\TT$ and
$k\in[d]$.  Identifying $\TT^d$ with $[-1/2, 1/2)^d$ we
see that the norm $\norm{\:\cdot\:}$ coincides with the Euclidean norm
$\abs{\:\cdot\:}$ restricted to $[-1/2, 1/2)^d$.

\subsection{Function spaces}
In this paper all vector spaces  will be defined over  $\CC$. 
The triple $(X, \mathcal B(X), \mu)$ denotes
 a measure space $X$ with a $\sigma$-algebra $\mathcal B(X)$ and a
$\sigma$-finite measure $\mu$.  The space of all $\mu$-measurable
 functions  $f:X\to\CC$ will be denoted by $L^0(X)$.
The space of all functions in $L^0(X)$ whose modulus is integrable
with $p$-th power is denoted by $L^p(X)$ for $p\in(0, \infty)$,
whereas $L^{\infty}(X)$ denotes the space of all essentially bounded
functions in $L^0(X)$.
These notions can be extended to functions taking values in a finite
dimensional normed vector space $(B, \|\cdot\|_B)$, for instance
\begin{align*}
L^{p}(X;B)
:=\big\{F\in L^0(X;B):\|F\|_{L^{p}(X;B)} \coloneqq \left\|\|F\|_B\right\|_{L^{p}(X)}<\infty\big\},
\end{align*}
where $L^0(X;B)$ denotes the space of measurable functions from $X$ to
$B$ (up to almost everywhere equivalence). If $B$ is
separable, these notions can be extended to infinite-dimensional
$B$. However, in this paper by appealing to  standard approximation arguments, we will always be able to work in
finite-dimensional settings.

For any $p\in[1,\infty]$ we define a weak-$L^p$ space of measurable functions on $X$ by setting
\begin{equation*}
    L^{p,\infty}(X):=\{f:X\to\CC\colon\norm{f}_{L^{p,\infty}(X)}<\infty\},
\end{equation*}
where for any $p\in[1,\infty)$ we have
\begin{align*}
\norm{f}_{L^{p,\infty}(X)}:=\sup_{\lambda>0}\lambda\mu(\{x\in X:|f(x)|>\lambda\})^{1/p},
\qquad\text{\and}\qquad
\norm{f}_{L^{\infty,\infty}(X)}:=\norm{f}_{L^{\infty}(X)}.
\end{align*}

In our case we will mainly have  $X=\RR^d$ or
$X=\TT^d$ equipped with the Lebesgue measure, and   $X=\ZZ^d$ endowed with the
counting measure. If $X$ is endowed with a counting measure we will
abbreviate $L^p(X)$ to $\ell^p(X)$ and $L^p(X; B)$ to $\ell^p(X; B)$ and $L^{p,\infty}(X)$ to $\ell^{p,\infty}(X)$.

If $T : B_1 \to B_2$ is a continuous linear  map between two normed
vector spaces $B_1$ and  $B_2$, we use $\|T\|_{B_1 \to B_2}$ to denote its
operator norm.

\subsection{Fourier transform}  
We will use convention that $\ex(z)=e^{2\pi {\bm i} z}$ for
every $z\in\CC$, where ${\bm i}^2=-1$. Let $\calF_{\RR^d}$ denote the Fourier transform on $\RR^d$ defined for
any $f \in L^1(\RR^d)$ and for any $\xi\in\RR^d$ as
\begin{align*}
\calF_{\RR^d} f(\xi) := \int_{\RR^d} f(x) \ex(x\cdot\xi) {\rm d}x.
\end{align*}
If $f \in \ell^1(\ZZ^d)$ we define the discrete Fourier
transform (Fourier series) $\calF_{\ZZ^d}$, for any $\xi\in \TT^d$, by setting
\begin{align*}
\calF_{\ZZ^d}f(\xi): = \sum_{x \in \ZZ^d} f(x) \ex(x\cdot\xi).
\end{align*}
Sometimes we shall abbreviate $\calF_{\ZZ^d}f$ to $\hat{f}$.

Let $\GG=\RR^d$ or $\GG=\ZZ^d$. It is well known that their corresponding dual groups are $\GG^*=(\RR^d)^*=\RR^d$ or $\GG^*=(\ZZ^d)^*=\TT^d$ respectively.
For any bounded function $\mathfrak m: \GG^*\to\CC$ and a test function $f:\GG\to\CC$ we define the Fourier multiplier operator  by 
\begin{align}
\label{eq:2}
T_{\GG}[\mathfrak m]f(x):=\int_{\GG^*}\ex(-\xi\cdot x)\mathfrak m(\xi)\calF_{\GG}f(\xi){\rm d}\xi, \quad \text{ for } \quad x\in\GG.
\end{align}
One may think that $f:\GG\to\CC$ is a compactly supported function on $\GG$ (and smooth if $\GG=\RR^d$) or any other function for which \eqref{eq:2} makes sense.

\subsection{Oscillation semi-norms}
 Let $\II\subseteq \RR$ be 
such that $\# \II\ge2$.
For every $J\in\ZZ_+\cup\{\infty\}$
define 
\begin{align*}
\mathfrak S_J(\II):
=
\Set[\big]{(t_i:i\in\NN_{\le J})\subseteq \II}{t_{0}<
t_{1}<\ldots <t_{J}},
\end{align*}
where $\NN_{\le \infty}:=\NN$.
In other
words, $\mathfrak S_J(\II)$ is a family of all strictly increasing
sequences  of length $J+1$
taking their values in the index set $\II$.

Let $(\mathfrak a_{t}(x): t\in\II)\subseteq\CC$ be a 
family of measurable functions defined on $X$. For any $r\in[1, \infty)$, $\JJ\subseteq\II$ and a sequence
$I=(I_i : i\in\NN_{\le J}) \in \mathfrak S_J(\II)$ the 
oscillation seminorm is defined by
\begin{align}
\label{eq:13}
O_{I, J}^r(\mathfrak a_{t}(x): t \in \JJ):=
\Big(\sum_{j=0}^{J-1}\sup_{t\in [I_j, I_{j+1})\cap\JJ}
\abs{\mathfrak a_{t}(x) - \mathfrak a_{I_j}(x)}^r\Big)^{1/r}.
\end{align}
There will be no problems with measurability in \eqref{eq:13} since we will always assume that
$\II\ni t\mapsto \mathfrak a_{t}(x)\in\CC$ is continuous for
$\mu$-almost every $x\in X$, or $\JJ$ is countable. We also use the
convention that the supremum taken over the empty set is zero.  
\begin{remark}
\label{rem:1}
Some remarks concerning definition \eqref{eq:13} are in order.
\begin{enumerate}[label*={\arabic*}.]
\item It is not difficult to see that $O_{I, J}^r(\mathfrak a_{t}: t \in \JJ)$ defines a semi-norm.

\item  Let $\II\subseteq \RR$ be an index set such that $\#{\II}\ge2$, and let $\JJ_1, \JJ_2\subseteq \II$ be disjoint. Then for any family $(\mathfrak a_t:t\in\II)\subseteq \CC$,  any $J\in\ZZ_+$ and any $I\in\mathfrak S_J(\II)$ one has
\begin{align}
\label{eq:61}
O_{I, J}^r(\mathfrak a_{t}: t\in\JJ_1\cup\JJ_2)
\le O_{I, J}^r(\mathfrak a_{t}: t\in\JJ_1)
+O_{I, J}^r(\mathfrak a_{t}: t\in\JJ_2).
\end{align}

\item  Let $(\mathfrak{a}_t(x): t\in\RR)\subseteq\CC$ be a family of measurable functions on a $\sigma$-finite measure space $(X,\calB(X),\mu)$. Let $\II\subseteq\RR$ and $\#\II\geq2$, then for every $p\in[1,\infty)$ and $r\in[1,\infty)$ we have
\begin{align}
\label{eq:63}
\norm{\sup_{t\in\II\setminus\sup\II}|\mathfrak{a}_t|}_{L^p(X)}\leq\sup_{t\in\II}\norm{\mathfrak{a}_t}_{L^p(X)}+\sup_{N\in\ZZ_+}\sup_{I\in\mathfrak{S}_N(\II)}\norm[\big]{O_{I,N}^r(\mathfrak{a}_t:t\in\II)}_{L^p(X)}.
\end{align}
The inequality \eqref{eq:63} states that oscillations always dominate maximal functions.

\item Let $(\mathfrak{a}_t(x): t\in\RR)\subseteq\CC$ be a family of measurable functions on a $\sigma$-finite measure space $(X,\calB(X),\mu)$. Suppose  that there are $p\in[1,\infty)$, $r\in[1,\infty)$ and $0<C_{p,r}<\infty$ such that
\begin{align}
\label{eq:65}
\sup_{N\in\ZZ_+}\sup_{I\in\mathfrak{S}_N(\RR_+)}\norm{O_{I,N}^r(\mathfrak{a}_t:t\in\RR_+)}_{L^p(X)}\leq C_{p,r}.
\end{align}
Then the limit $\lim_{t\to\infty}\mathfrak{a}_t(x)$ exists for
$\mu$-almost every $x\in X$. In other words, the condition \eqref{eq:65}
 implies pointwise almost everywhere convergence of $\mathfrak{a}_t(x)$ as $t\to\infty$.
\end{enumerate}

\end{remark}

We recall some notation from \cite[Section 3, p. 165]{Hyt}. Let
$(X,\calB(X),\mu)$ be a $\sigma$-finite measure space and let $\II$ be
a totally ordered set. A sequence of sub-$\sigma$-algebras
$(\mathcal F_t: t\in\II)$ is called \textit{filtration} if it is increasing and
the measure $\mu$ is $\sigma$-finite on each $\mathcal F_t$.
Recall that a \textit{martingale} adapted to a filtration $(\mathcal F_t: t\in\II)$ is a family of functions $\mathfrak f=(\mathfrak f_t:t\in\II)\subseteq L^1(X,\calB(X),\mu)$ such that $\mathfrak f_{s}=\EE[\mathfrak f_t|\mathcal F_s]$ for every $s, t\in\II$ so that $s\le t$, where $\EE[\cdot|\mathcal F]$ denotes the the conditional
expectation with respect to a sub-$\sigma$-algebra $\mathcal F\subseteq \mathcal B(X)$. We say that a martingale 
$\mathfrak f=(\mathfrak f_t:t\in\II)\subseteq L^p(X,\calB(X),\mu)$ is bounded if
\[
\sup_{t\in\II}\|\mathfrak f_t\|_{L^p(X)}\lesssim_p1.
\]

We now establish oscillation inequalities for bounded martingales in $L^p(X,\calB(X),\mu)$.
\begin{proposition}\label{prop:1}
For every $p\in(1, \infty)$ there exists a constant $C_p>0$ such that for every
bounded martingale $\mathfrak f=(\mathfrak f_n:n\in\ZZ)\subseteq L^p(X,\calB(X),\mu)$ corresponding to a filtration $(\mathcal F_n: n\in\ZZ)$ one has
\begin{align}
\label{eq:66}
\sup_{N\in\ZZ_+} \sup_{I\in\mathfrak S_N(\ZZ)}
\norm[\big]{O^2_{I,N}(\mathfrak f_n:n\in\ZZ)}_{L^p(X)}
&\leq C_p
\sup_{n\in\ZZ}\|\mathfrak f_n\|_{L^p(X)}.
\end{align}
\end{proposition}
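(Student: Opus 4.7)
My plan is to reduce the uniform $2$-oscillation estimate to the scalar Burkholder square function inequality via a vector-valued (Fefferman--Stein type) Doob maximal inequality. Fix $N\in\ZZ_+$ and $I=(I_0<I_1<\cdots<I_N)\in\mathfrak S_N(\ZZ)$, and introduce the ``jump variables'' $G_j := \mathfrak f_{I_{j+1}} - \mathfrak f_{I_j}$ for $0\le j\le N-1$.

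The starting observation is that, by the martingale property, $\mathfrak f_n = \EE[\mathfrak f_{I_{j+1}}\mid \mathcal F_n]$ for every $n\in[I_j,I_{j+1})$; since $\mathfrak f_{I_j}$ is $\mathcal F_n$-measurable for such $n$, this gives
\[
\mathfrak f_n - \mathfrak f_{I_j} = \EE[G_j\mid\mathcal F_n], \qquad n\in[I_j, I_{j+1}).
\]
Hence each block of the oscillation is pointwise dominated by a scalar Doob maximal function,
\[
\sup_{n\in[I_j,I_{j+1})}\abs{\mathfrak f_n - \mathfrak f_{I_j}}
\le G_j^* := \sup_{n\ge I_j}\abs[\big]{\EE[G_j\mid\mathcal F_n]},
\]
so that $O^2_{I,N}(\mathfrak f_n:n\in\ZZ) \le \bigl(\sum_{j=0}^{N-1}(G_j^*)^2\bigr)^{1/2}$ pointwise on $X$.

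Next I would appeal to the $\ell^2$-valued Doob maximal inequality for the family of scalar martingales $(\EE[G_j\mid\mathcal F_n])_{n\ge I_j}$, which for every $p\in(1,\infty)$ yields
\[
\norm[\big]{(G_j^*)_j}_{L^p(X;\ell^2)}
\lesssim_p
\norm[\big]{(G_j)_j}_{L^p(X;\ell^2)}.
\]
For $p=2$ this follows immediately by summing the scalar Doob $L^2$ estimate over $j$; for general $p\in(1,\infty)$ it can be obtained by linearising each $G_j^*$ through a stopping time $\tau_j$ and invoking a Stein-type martingale inequality, or equivalently by interpolating the scalar weak-$(1,1)$ Doob bound in the spirit of Fefferman--Stein.

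Finally, $(G_j)_{j=0}^{N-1}$ is a martingale difference sequence: $G_j$ is $\mathcal F_{I_{j+1}}$-measurable and $\EE[G_j\mid\mathcal F_{I_j}]=0$ by the martingale property, and the partial sums telescope, $\sum_{j<K}G_j = \mathfrak f_{I_K}-\mathfrak f_{I_0}$. The scalar Burkholder square function inequality together with Doob's maximal inequality then give
\[
\norm[\Big]{\Big(\sum_{j=0}^{N-1}\abs{G_j}^2\Big)^{1/2}}_{L^p(X)}
\lesssim_p
\sup_{K\le N}\norm{\mathfrak f_{I_K}-\mathfrak f_{I_0}}_{L^p(X)}
\lesssim_p
\sup_{n\in\ZZ}\norm{\mathfrak f_n}_{L^p(X)}.
\]
Chaining the three estimates and taking the supremum over $N$ and $I$ establishes \eqref{eq:66}. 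The main obstacle is the vector-valued Doob inequality in the middle step, which packages the $L^p$-theory; once it is granted, the rest of the argument is essentially formal, and the proof is completely insensitive to the choice of $I$, delivering the uniform bound claimed in the proposition.
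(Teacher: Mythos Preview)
Your argument is correct and follows essentially the same three-step skeleton as the paper: rewrite each oscillation block as $\EE[G_j\mid\mathcal F_n]$, control the resulting maximal functions by a Doob-type inequality, and finish with the Burkholder square-function bound for the skipped martingale $(\mathfrak f_{I_K})_K$.

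The only noteworthy difference is in the middle step. You invoke the $\ell^2$-valued (Fefferman--Stein) Doob maximal inequality as a black box valid for all $p\in(1,\infty)$, which handles the full range at once. The paper instead unpacks that step: for $p\ge 2$ it dualizes with a weight $w\in L^{(p/2)'}$ and applies the \emph{weighted} scalar Doob inequality \eqref{eq:184} to each block, which is precisely the standard proof of the $\ell^2$-valued Doob bound in that range; for $p\in(1,2)$ it falls back on the weak-type $(1,1)$ estimate from \cite{jkrw} and interpolation. So your packaging is slightly cleaner and uniform in $p$, while the paper's presentation is more self-contained for $p\ge 2$ but outsources the small-$p$ case. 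Substantively the two proofs coincide.
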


This proposition was established in  \cite[Theorem 6.4, p. 930]{jkrw}. The authors first established \eqref{eq:66} for $p=2$, then proved weak type $(1,1)$ as well as $L^{\infty}\to {\rm BMO}$ variants of \eqref{eq:66}, and consequently by a simple interpolation  derived \eqref{eq:66} for all $p\in(1, \infty)$.  Here we give a slightly different and simplified  proof based on a weighted Doob's inequality, which avoids  $L^{\infty}\to {\rm BMO}$ estimates and in fact is very much in the spirit of the estimates for $p=2$ from \cite[Theorem 6.1, p. 927]{jkrw}.
\begin{proof}[Proof of Proposition \ref{prop:1}]
We fix $N \in\ZZ_+$ and a sequence $I\in\mathfrak S_N(\ZZ)$.
We first prove \eqref{eq:66} for $p\ge 2$. Since $r=p/2\ge1$ we take
a nonnegative $w\in L^{r'}(X)$ such that $\norm{w}_{L^{r'}(X)}\le 1$ and
\begin{align*}
\norm[\big]{O^2_{I,N}(\mathfrak f_n:n\in\ZZ)}_{L^p(X)}^2
&=\sum_{i=0}^{N-1}\int_X\sup_{I_i\le n<I_{i+1}}\abs{\mathfrak f_n-\mathfrak f_{I_i}}^2w\:\dif\mu.
\end{align*}
To estimate the last sum we will use a weighted version of Doob's
maximal inequality, see \cite[Theorem 3.2.3, p. 175]{Hyt},  which asserts that for every $p\in(1, \infty)$,
every function $f\in L^p(X)$ and a nonnegative measurable weight $w$
and for any $n\in\ZZ$ we have
\begin{align}
\label{eq:184}
\bigg(\int_X\sup_{m\le n}\abs{\mathfrak f_m}^pw\:\dif\mu\bigg)^{1/p}
\le p'\bigg(\int_X\abs{\mathfrak f_n}^p\sup_{m\in\ZZ}|\EE[w|\mathcal F_m]|\dif\mu\bigg)^{1/p}.
\end{align}
We will also use an unweighted Doob's inequality, see \cite[Theorem 3.2.2, p. 175]{Hyt}, which yields that for every $p\in(1, \infty)$ we have
\begin{align}
\label{eq:185}
\norm[\big]{\sup_{m\in\ZZ}\abs{\mathfrak f_m}}_{L^{p}(X)}\le p'\sup_{m\in\ZZ}\norm{\mathfrak f_m}_{L^{p}(X)}.
\end{align}
Finally we will use for every $p\in(1, \infty)$  the following bound
\begin{align*}
\sup_{(\omega_n:n\in\ZZ)\in\{-1, 1\}^{\ZZ}}\norm[\Big]{\sum_{k\in\ZZ}\omega_k(\mathfrak f_k-\mathfrak f_{k-1})}_{L^p(X)}
\lesssim_p\sup_{m\in\ZZ}\norm{\mathfrak f_m}_{L^{p}(X)},
\end{align*}
which by Khintchine's inequality ensures
\begin{align}
\label{eq:190}
\norm[\bigg]{\Big(\sum_{i=0}^{N-1}\abs[\big]{\sum_{k=I_i+1}^{I_{i+1}}(\mathfrak f_k-\mathfrak f_{k-1})}^2\Big)^{1/2}}_{L^p(X)}
\lesssim_p
\sup_{m\in\ZZ}\norm{\mathfrak f_m}_{L^{p}(X)}.
\end{align}
Then we conclude
\begin{align*}
\sum_{i=0}^{N-1}\int_X&\sup_{I_i\le n<I_{i+1}}\abs{\mathfrak f_n-\mathfrak f_{I_i}}^2w\:\dif\mu\\
&=\sum_{i=0}^{N-1}\int_X\sup_{I_i\le n<I_{i+1}}\abs{\EE[(\mathfrak f_{I_{i+1}}-\mathfrak f_{I_i})|\mathcal F_n]}^2w\:\dif\mu\\
&\le 4\sum_{i=0}^{N-1} \int_X \abs{\EE[(\mathfrak f_{I_{i+1}}-\mathfrak f_{I_i})|\mathcal F_{I_{i+1}}]}^2\sup_{n\in\ZZ}\abs{\EE[w|\mathcal F_{n}]}\dif\mu
&& \text{by \eqref{eq:184}}\\
&=4\int_X\sum_{i=0}^{N-1}\abs[\big]{\sum_{k=I_i+1}^{I_{i+1}}(\mathfrak f_k-\mathfrak f_{k-1})}^2
\sup_{n\in\ZZ}\abs{\EE[w|\mathcal F_{n}]}\dif\mu\\
&\le 4\norm[\bigg]{\Big(\sum_{i=0}^{N-1}\abs[\big]{\sum_{k=I_i+1}^{I_{i+1}}(\mathfrak f_k-\mathfrak f_{k-1})}^2\Big)^{1/2}}_{L^p(X)}^2
\norm[\big]{\sup_{n\in\ZZ}\abs{\EE[w|\mathcal F_{n}]}}_{L^{r'}(X)}
&&\text{by H{\"o}lder's inequality}\\
&\lesssim_{p, r}\sup_{m\in\ZZ}\norm{\mathfrak f_m}_{L^{p}(X)}^2\norm{w}_{L^{r'}(X)}
&&\text{by \eqref{eq:190} and \eqref{eq:185}}.
\end{align*}
This proves \eqref{eq:66} for all $p\in[2, \infty)$. To prove \eqref{eq:66} for $p\in(1, 2)$ it suffices to show the corresponding  weak type $(1,1)$ estimate. This follows from \cite[Theorem 6.2, p. 928]{jkrw} we omit the details. 
\end{proof}
\subsection{Variation semi-norms}
 We also recall the definition of $r$-variations. For any $\mathbb I\subseteq \RR$, any family $(\mathfrak a_t: t\in\mathbb I)\subseteq \CC$, and any exponent
$1 \leq r < \infty$, the $r$-variation semi-norm is defined to be
\begin{align}
\label{eq:67}
V^{r}(\mathfrak a_t: t\in\mathbb I):=
\sup_{J\in\ZZ_+} \sup_{\substack{t_{0}<\dotsb<t_{J}\\ t_{j}\in\mathbb I}}
\Big(\sum_{j=0}^{J-1}  |\mathfrak a_{t_{j+1}}-\mathfrak a_{t_{j}}|^{r} \Big)^{1/r},
\end{align}
where the latter supremum is taken over all finite increasing sequences in
$\mathbb I$.

\begin{remark}
\label{rem:2}
Some remarks about definition \eqref{eq:67}  are in order.
\begin{enumerate}[label*={\arabic*}.]
\item Clearly $V^{r}(\mathfrak a_t: t\in\mathbb I)$ defines a semi-norm.
\item The function $r\mapsto V^r(\mathfrak{a}_t: t\in \II)$ is non-increasing. Moreover, if $\II_1\subseteq \II_2$, then
\begin{equation*}
    V^r(\mathfrak{a}_t: t\in \II_1)\le V^r(\mathfrak{a}_t: t\in \II_2).
\end{equation*}

 \item  Let $\II\subseteq \RR$ be  such that $\#{\II}\ge2$. Let  $(\mathfrak a_t:t\in\RR)\subseteq \CC$ be given, and let $r\in[1, \infty)$. If $V^r(\mathfrak{a}_t: t\in \RR_+)<\infty$ then
$\lim_{t\to\infty}\mathfrak{a}_t$ exists. Moreover, for any $t_0\in\II$ one has
\begin{align}
\label{eq:68}
\sup_{t\in\II}|\mathfrak{a}_t|\le |\mathfrak{a}_{t_0}|+V^r(\mathfrak{a}_t: t\in \II).
\end{align}

\item Let $\II\subseteq \RR$ be a countable index set such that $\#{\II}\ge2$. Then 
 for any $r\ge1$, and any family $(\mathfrak a_t:t\in\II)\subseteq \CC$, any  $J\in\ZZ_+\cup\{\infty\}$, any  $I\in\mathfrak S_J(\II)$  one has
 \begin{align}
\label{eq:62}
 O_{I, J}^r(\mathfrak a_{t}: t \in \II)\le V^r(\mathfrak{a}_t: t\in \II)\le 2\Big(\sum_{t\in\II}|\mathfrak a_{t}|^r\Big)^{1/r}.
 \end{align}

\item 
The first inequality in \eqref{eq:62} allows us to
deduce the Rademacher--Menshov inequality for oscillations, which
asserts that for any  $j_0, m\in\NN$ so that $j_0< 2^m$ and any
sequence of complex numbers $(\mathfrak a_k: k\in\NN)$, any $J\in[2^m]$ and any $I\in\mathfrak S_J([j_0, 2^m])$ we have
\begin{align}
\label{eq:69}
\begin{split}
O_{I, J}^2(\mathfrak a_{j}: j_0\leq j \le 2^m)&\le V^{2}( \mathfrak a_j: j_0\leq j \le 2^m)\\
&\leq \sqrt{2}\sum_{i=0}^m\Big(\sum_{j=0}^{2^{m-i}-1}\big|\sum_{\substack{k\in U_{j}^i\\
U_{j}^i\subseteq [j_0, 2^m)}} (\mathfrak a_{k+1}-\mathfrak a_{k}) \big|^2\Big)^{1/2},
\end{split}
\end{align}
where $U_j^i:=[j2^i, (j+1)2^i)$ for any $i, j\in\NN$. The latter inequality in \eqref{eq:69} immediately follows from the proof of
\cite[Lemma 2.5, p. 534]{MSZ2}. The inequality \eqref{eq:69} will be used in Section \ref{sec:Radon}.

\item Let $(\mathfrak{a}_t(x): t\in\II)\subseteq\CC$ be a family of measurable functions on a $\sigma$-finite measure space $(X,\calB(X),\mu)$. Then for any $p\ge1$ and $\tau>0$ we have 
\begin{align}
\label{eq:70}
\begin{gathered}
\sup_{N\in\ZZ_+}\sup_{I\in\mathfrak{S}_N(\II)}\norm{O_{I,N}^2(\mathfrak{a}_t:t\in\II)}_{L^p(X)}\lesssim \sup_{N\in\ZZ_+}\sup_{I\in\mathfrak{S}_N(\DD_\tau)}\norm{O_{I,N}^2(\mathfrak{a}_{t}:t\in \DD_\tau)}_{L^p(X)}\\
+\norm[\Big]{\Big(\sum_{n\in\ZZ} V^2(\mathfrak{a}_{t}:t\in[2^{n^\tau},2^{(n+1)^\tau})\cap\II)^2\Big)^{1/2}}_{L^p(X)}.
\end{gathered}
\end{align}
The inequality \eqref{eq:70} is an analogue of \cite[Lemma 1.3, p. 6716]{jsw} for oscillation semi-norms. 
\end{enumerate}

\end{remark}

\subsection{Jumps}
The $r$-variation is closely related to the $\lambda$-jump counting function. Recall that for any $\lambda>0$ the $\lambda$-jump counting function of a function $f : \I \to \C$ is defined by
\begin{align}
\label{eq:71}
N_{\lambda} f:=N_\lambda(f(t):t\in\II):=\sup \{ J\in\N : \exists_{\substack{t_{0}<\ldots<t_{J}\\ t_{j}\in\I}}  : \min_{0 \le j \le J-1} |f(t_{j+1})-f(t_{j})| \ge \lambda \}.
\end{align}

\begin{remark}
\label{rem:3}
Some remarks about definition \eqref{eq:71}  are in order.
\begin{enumerate}[label*={\arabic*}.]
\item For any $\lambda>0$ and a function $f : \I \to \C$ let us also define the following quantity
\begin{align*}
\qquad \mathcal{N}_{\lambda} f:=\calN_\lambda(f(t):t\in\II):=\sup \{ J\in\N : \exists_{\substack{s_1 < t_{1}  \le \ldots \le s_J < t_{J} \\ s_j , t_{j}\in\I}}  : \min_{1 \le j \le J} |f(t_{j})-f(s_{j})| \ge \lambda \}.  
\end{align*}
Then on has $N_{\lambda} f \le \mathcal{N}_{\lambda} f \le N_{\lambda/2} f$.
\item  Let $(\mathfrak{a}_t(x): t\in\RR)\subseteq\CC$ be a family of measurable functions on a $\sigma$-finite measure space $(X,\calB(X),\mu)$. Let $\II\subseteq\RR$ and $\#\II\geq2$, then for every $p\in[1,\infty]$ and $r\in[1,\infty)$ we have 
\begin{align}
\label{eq:72}
\sup_{\lambda>0}\|\lambda N_{\lambda}(\mathfrak{a}_t: t\in\II)^{1/r}\|_{L^p(X)}\le \|V^r(\mathfrak{a}_t: t\in\II)\|_{L^p(X)},
\end{align}
since for all $\lambda>0$ we have the following pointwise estimate
\[
\lambda N_{\lambda}(\mathfrak{a}_t(x): t\in\II)^{1/r}\le V^r(\mathfrak{a}_t(x): t\in\II).
\]
\item  Let $(X,\mathcal{B}(X),\mu)$ be a $\sigma$-finite measure space and $\I\subseteq\RR$. Fix $p \in [1,\8]$, and $1 \le \rho < r \le \infty$. Then for every measurable function $f : X \times \I \to \C$ we have the estimate
\begin{align} \label{estt1}
\big\| V^{r}\big( f(\cdot, t) : t \in \I \big) \big\|_{L^{p,\8} (X)}
\lesssim_{p,\rho,r}
\sup_{\lambda>0} \big\| \lambda N_{\lambda}(f(\cdot, t): t\in \I)^{1/\rho} \big\|_{L^{p,\8}(X)}.
\end{align}
The inequality \eqref{estt1} can be thought of as an inversion of the inequality \eqref{eq:72}. A detailed proof of \eqref{estt1} can be found in \cite[Lemma 2.3, p. 805]{MSZ1}.
\item For every $p \in (1,\infty)$, and $\rho \in (1,\infty)$ there exists a constant $0<C<\infty$ such that for every measure space $(X,\calB(X),\mu)$, and  $\II\subseteq \RR$, there exists a (subadditive) seminorm $\vertiii{\cdot}$ such that
\begin{align}
\label{eq:55}
C^{-1} \vertiii{f} \leq \sup_{\lambda>0} \big\| \lambda N_{\lambda}(f(\cdot, t): t\in \II)^{1/\rho} \big\|_{L^{p}(X)} \leq C \vertiii{f}
\end{align}
 holds for all measurable functions $f : X \times \I \to \CC$. Inequalities in
\eqref{eq:55} were established in \cite[Corollary 2.2, p. 805]{MSZ1}. This
shows that jumps are very close to seminorms.
\end{enumerate}
\end{remark}

We close this discussion by showing that we cannot replace $L^{p,\8} (X)$  with $L^{p} (X)$ in \eqref{estt1}.
\begin{lemma} \label{lem:CEvarjum}
For a fixed $1 \le p < \infty$ there exists a function $f \colon \ZZ_+ \times \ZZ_+ \to \RR$ such that
\begin{align*}
\| V^r(f(\cdot,n):n\in\ZZ_+) \|_{\ell^p(\ZZ_+)} = \8, \qquad  2 \le r \le \infty,
\end{align*}
and
\begin{align*}
\sup_{\lambda > 0} \| \lambda N_{\lambda}(f(\cdot,n):n\in\ZZ_+)^{1/2} \|_{\ell^p(\ZZ_+)} < \8.
\end{align*}
\end{lemma}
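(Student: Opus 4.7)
The plan is to construct $f$ coordinate-by-coordinate so that each sequence $n\mapsto f(x,n)$ contains only a single jump, with the jump heights forming a sequence that lies in $\ell^{p,\infty}(\ZZ_+)$ but not in $\ell^{p}(\ZZ_+)$. The canonical separator between these spaces is $x\mapsto x^{-1/p}$, so I propose to take
\[
f(x,n):=x^{-1/p}\ind{n\ge 2},\qquad x,n\in\ZZ_+.
\]
For each fixed $x$ the map $n\mapsto f(x,n)$ takes the value $0$ at $n=1$ and the value $x^{-1/p}$ for every $n\ge 2$, producing exactly one jump of height $x^{-1/p}$.

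With this choice the $r$-variation at each coordinate is trivial to evaluate: since there is only one nonzero difference among any strictly increasing sequence of times, we get $V^{r}(f(x,\cdot):n\in\ZZ_+)=x^{-1/p}$ for every $r\in[1,\infty]$. Therefore
\[
\norm{V^{r}(f(\cdot,n):n\in\ZZ_+)}_{\ell^{p}(\ZZ_+)}^{p}=\sum_{x\in\ZZ_+}x^{-1}=\infty,
\]
which gives the first claim for all $r\in[2,\infty]$ simultaneously (and in fact for all $r\in[1,\infty]$).

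For the jump side, the elementary observation is $N_{\lambda}(f(x,\cdot):n\in\ZZ_+)=\ind{x^{-1/p}\ge\lambda}=\ind{x\le \lambda^{-p}}$, so that for every $\lambda>0$,
\[
\lambda^{p}\norm{N_{\lambda}(f(\cdot,n):n\in\ZZ_+)^{1/2}}_{\ell^{p}(\ZZ_+)}^{p}
=\lambda^{p}\,\#\{x\in\ZZ_+:x\le \lambda^{-p}\}\le 1.
\]
Taking the supremum in $\lambda$ yields $\sup_{\lambda>0}\norm{\lambda N_{\lambda}(f(\cdot,n):n\in\ZZ_+)^{1/2}}_{\ell^{p}(\ZZ_+)}\le 1<\infty$, completing the construction.

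There is no genuine obstacle here; the only conceptual point is recognizing that the uniform jump seminorm on the left of \eqref{estt1} measures a weak-type $\ell^{p}$ strength of the sequence of jump heights, whereas $r$-variation measures its strong $\ell^{p}$ strength, and the two differ exactly by the endpoint function $x\mapsto x^{-1/p}$. One might worry that the exponent $1/2$ in $N_{\lambda}^{1/2}$ (rather than $1/\rho$ for some $\rho<r$) makes the separation harder, but since the per-coordinate jump count is either $0$ or $1$, the exponent $1/2$ plays no role and the same argument would work with any $N_{\lambda}^{1/\rho}$.
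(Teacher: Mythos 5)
Your construction is correct and is essentially identical to the paper's: both use a single jump of height $x^{-1/p}$ at each coordinate $x$, so that the variation norm sees $\sum_x x^{-1}=\infty$ while the jump functional only sees the weak-$\ell^p$ bound $\sup_j j\,a_j^p$. The only cosmetic difference is that the paper places the value $x^{-1/p}$ at $n=1$ and $0$ afterwards, whereas you do the reverse; the computations are the same.
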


\begin{proof}
Take a sequence $(a_x: x\in \ZZ_+)\subseteq \CC$, to be specified later,  such that $a_1 > a_2 > \ldots > 0$ and such that $a_x \to 0$ as $x \to \infty$ (actually $a_x = x^{-1/p}$ will work). We define a function $f$ as follows
\begin{align*}
f(x,1) = a_x, \qquad f(x,n) = 0, \qquad n \in\NN_{\ge2}, \quad x \in \ZZ_+.
\end{align*}
Then, for every $2 \le r \le \infty$ we have $V^r f(x) = a_x$ and consequently we have
\begin{align*}
\| V^r(f(\cdot,n):n\in\ZZ_+) \|_{\ell^p(\ZZ_+)}^p = \sum_{j\in\ZZ_+} a_j^p, \qquad 2 \le r \le \infty.
\end{align*}
We now compute $N_{\lambda}f$. Observe that 
\begin{align*}
N_{\lambda}f (x) =
\begin{cases}
0,  
&
\lambda > a_x,\\
1,  
& \lambda \le a_x.
\end{cases}
\end{align*}
Using this we see that $N_{\lambda}f (x) = 0$ for all $x \in \ZZ_+$ if $\lambda > a_1$. Otherwise, if $a_j \ge \lambda > a_{j+1}$ for some $j \ge 1$, then 
$N_{\lambda}f (x) = \mathds{1}_{[j]} (x)$ and consequently we get
\begin{align*}
\| (N_{\lambda}f)^{1/2} \|_{\ell^p(\ZZ_+)}^p
=
\begin{cases}
0,  
&
\lambda > a_1,\\
j,  
& a_j \ge \lambda > a_{j+1}, \quad j \in\ZZ_+.
\end{cases}
\end{align*}
Therefore, we obtain
\begin{align*}
\sup_{\lambda > 0} \| \lambda (N_{\lambda}f)^{1/2} \|_{\ell^p(\ZZ_+)}^p
& =
\sup_{\lambda > 0} \lambda^p
\begin{cases}
0,  
&
\lambda > a_1,\\
j,  
& a_j \ge \lambda > a_{j+1}, \quad j \in\ZZ_+.
\end{cases} \\
& = 
\sup_{j \in\ZZ_+} j a_j^p.
\end{align*}
Taking $a_j = j^{-1/p}$ the desired conclusion follows and the proof of Lemma~\ref{lem:CEvarjum} is finished.
\end{proof}
\begin{remark}
Note that the same example can be used to show that $\lambda$-jumps with the parameter $r=2$ may not imply $r$-oscillations with $r\ge2$ in the $\ell^p$ sense. To be more precise, there exists a function $f:\ZZ_+\times\ZZ_+\to\RR$ such that  
\begin{equation*}
\sup_{N\in\ZZ_+}\sup_{I\in\mathfrak{S}_N(\ZZ_+)}\norm{O_{I,N}^r(f(\cdot,n):n\in\ZZ_+)}_{\ell^p(\ZZ_+)}
=\infty,\quad 2\le r\le\infty,
\end{equation*}
and
\begin{equation*}
\sup_{\lambda > 0} \| \lambda N_{\lambda}(f(\cdot,n):n\in\ZZ_+)^{1/2} \|_{\ell^p(\ZZ_+)} < \infty.
\end{equation*}
\end{remark}

\section{Uniform oscillation inequalities: Proof of Theorem \ref{thm:main}}\label{sec:Radon}
In this section we prove the main uniform oscillation inequality from
Theorem \ref{thm:main}. We will closely follow the ideas and notation from
\cite{MSZ3}.  We begin with   standard reductions which allow us to
simplify our arguments.  By a standard lifting argument it suffices to
prove \eqref{eq:64} for canonical polynomial mappings. Fix a nonempty $\Gamma\subset \NN^k\setminus\{0\}$  and recall that
the canonical polynomial mapping is defined by
\begin{equation*}
\RR^k\ni x=(x_1,\ldots,x_k)\mapsto(x)^\Gamma:=(x^\gamma\colon\gamma\in\Gamma)\in\RR^\Gamma,
\end{equation*}
where $x^\gamma:=x_1^{\gamma_1}\cdots x_k^{\gamma_k}$ for
$\gamma\in \Gamma$. Here $\RR^\Gamma$ denotes the space
of tuples of real numbers labeled by multi-indices
$\gamma=(\gamma_1,\dots,\gamma_k)$, so that
$\RR^\Gamma\cong\RR^{|\Gamma|}$, and similarly for
$\ZZ^\Gamma\cong\ZZ^{|\Gamma|}$.

For $t\in\XX$, $x\in X$ and $f\in L^0(X)$ define
the averaging operator
\begin{align}
\label{eq:56}
A_tf(x)
= \frac{1}{\abs{\Omega_t\cap\ZZ^k}}
\sum_{m\in \Omega_t\cap\ZZ^k}
f\Big(\prod_{\gamma\in\Gamma}T_{\gamma}^{m^{\gamma}}   x\Big),
\end{align}
where $T_\gamma:X\to X$, $\gamma \in \Gamma$, 
is a family of
commuting  invertible measure preserving transformations.
Our aim will be to prove the following ergodic theorem along the  canonical polynomial mappings.
\begin{theorem}
\label{thm:main'}
Let $k\in\ZZ_+$ and a finite nonempty set 
$\Gamma \subset \NN^{k} \setminus \{0\}$ be given. Let $(X, \calB(X), \mu)$ be a
$\sigma$-finite measure space endowed with a family
 of commuting  invertible measure
preserving transformations $\{T_{\gamma}: X\to X: \gamma\in\Gamma\}$. 
Then for every $p\in(1, \infty)$ there exists a constant
$C_{\Gamma, k, p}>0$ such that for every $f\in L^p(X)$ we have
\begin{align}
\label{eq:57}
\sup_{J\in\ZZ_+}\sup_{I\in \mathfrak S_J(\XX)}
\norm[\big]{O_{I, J}^2(A_{t}f:t\in\XX)}_{L^p(X)}
\le C_{\Gamma, k, p}\norm{f}_{L^p(X)}.
\end{align}
\end{theorem}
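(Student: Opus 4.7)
The plan is to follow the template established in \cite{MST2, MSZ3}, upgrading their non-uniform $r$-variation arguments (with $r>2$) to uniform $2$-oscillation bounds. First, by a Calder{\'o}n transference argument applied to the $\ZZ^{\Gamma}$-action generated by the commuting family $\{T_{\gamma}\}$, it suffices to prove \eqref{eq:57} for the integer translation model
\[
\calA_t f(x) = \frac{1}{\abs{\Omega_t\cap\ZZ^k}}\sum_{m\in\Omega_t\cap\ZZ^k} f(x - (m)^{\Gamma}), \qquad x\in\ZZ^{\Gamma}.
\]
This reduction is purely algebraic, so the constant it produces is independent of the coefficients of the original polynomial mapping, matching the independence claim in Theorem \ref{thm:main}.

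Fix a small parameter $\tau\in(0,1)$ and split the scale set $\XX$ into the lacunary-type set $\DD_{\tau}$ and the intervals between its points, as in \eqref{eq:70}. The \emph{short} oscillations -- those taken across $[2^{n^{\tau}},2^{(n+1)^{\tau}})\cap\XX$ and then assembled in $\ell^2$ over $n$ -- are controlled by the $r$-variational estimates with $r>2$ already proved in \cite{MST2, MSZ3}, since on each short interval the logarithmic scale span is small enough that the $J^{1/2-1/r}$ loss of such an estimate is harmless when summed with the $\tau$-power gain coming from the Rademacher--Menshov slicing. It therefore remains to prove the \emph{long} oscillation estimate
\[
\sup_{J\in\ZZ_+}\sup_{I\in\mathfrak S_J(\DD_{\tau})}\norm[\big]{O^2_{I,J}(\calA_t f: t\in \DD_{\tau})}_{\ell^p(\ZZ^{\Gamma})}\lesssim_{\Gamma,k,p}\norm{f}_{\ell^p(\ZZ^{\Gamma})},
\]
uniformly in $I$ and $J$, which is where all the genuine work lies.

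On the scales $\DD_{\tau}$ we apply the Hardy--Littlewood circle method to the Fourier symbol of $\calA_{2^{n^{\tau}}}$, decomposing it into a major-arc piece (near rationals of small denominator, approximated by Gauss sums times a continuous multiplier via Proposition \ref{aprox} and Proposition \ref{prop:lattice-boundary}) and a minor-arc piece. Weyl's inequality (Theorem \ref{thm:weyl-sums}) gives a power gain on the minor-arc piece; dominating the oscillation by the $\ell^2$-sum via the second inequality in \eqref{eq:62} then makes the minor-arc contribution summable across $n\in\NN$, hence harmless and uniform in $J$. For the major-arc piece we invoke the Ionescu--Wainger multiplier theorem (Theorem \ref{thm:IW}) to factor out the arithmetic phases, followed by the Magyar--Stein--Wainger sampling principle (Proposition \ref{prop:msw}) to transfer the remaining continuous oscillation problem from $\RR^{\Gamma}$ to $\ZZ^{\Gamma}$. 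The continuous long-oscillation square function is then split via the Rademacher--Menshov inequality \eqref{eq:69}, and each resulting block is compared to a dyadic martingale at the appropriate scale, where the uniform oscillation bound for martingales from Proposition \ref{prop:1} closes the estimate without introducing any $J$-dependence.

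The main obstacle is precisely the endpoint character of $r=2$. In the non-uniform variational setting of \cite{MST2, MSZ3} one can afford a $J^{1/2-1/r}$ loss per Rademacher--Menshov block because a subsequent summation absorbs it; here one must produce bounds genuinely independent of $J$ and of the specific increasing sequence $I\in\mathfrak S_J(\DD_{\tau})$. This forces the martingale oscillation estimate \eqref{eq:66}, the Ionescu--Wainger theory and the sampling principle to be combined with the Rademacher--Menshov decomposition in tandem, so that the endpoint square-function input is absorbed exactly once at the last step. Since every tool in this chain (Weyl, Ionescu--Wainger, Magyar--Stein--Wainger, Rademacher--Menshov, martingale oscillations) yields bounds that depend only on $k$, $\deg P$ and $p$, coefficient-independence of the final constant is preserved throughout.
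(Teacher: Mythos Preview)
Your outline matches the paper's route --- Calder{\'o}n transference to $\ZZ^{\Gamma}$, the long/short splitting \eqref{eq:70}, circle method on the long piece, and the same toolkit (Weyl, Ionescu--Wainger, Magyar--Stein--Wainger, Rademacher--Menshov, martingale oscillations). Two steps, however, are not described correctly.

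For the short piece your stated argument fails. The second term in \eqref{eq:70} is a genuine $V^2$ quantity, and passing from $V^r$ ($r>2$) to $V^2$ by H{\"o}lder costs $J_n^{1/2-1/r}$ with $J_n=\#\big([2^{n^{\tau}},2^{(n+1)^{\tau}})\cap\ZZ\big)\simeq 2^{n^{\tau}}n^{\tau-1}$, an exponential loss that is not summable. The paper (as do \cite{MST2,MSZ3}) instead bounds the $V^1$-variation of the \emph{kernels} on each short block by $n^{\tau-1}$ via the lattice-point estimate of Proposition~\ref{prop:lattice-boundary}; this dominates $V^2$ pointwise and is summable in $\ell^q$, $q=\min\{p,2\}$, once $\tau$ obeys \eqref{eq:5}. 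No $r>2$ input is used.

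Your placement of Rademacher--Menshov in the major-arc analysis is also off. After approximating by ${\bm m}_n$ and decomposing into arithmetic scales $S\in 2^{u\ZZ_+}$, the paper splits each $S$-piece into small scales $n\le 2^{\kappa_S+1}$ and large scales $n\ge 2^{\kappa_S}$. Rademacher--Menshov \eqref{eq:69} is used only on the \emph{small} scales, still on the $\ZZ^{\Gamma}$ side, followed by Theorem~\ref{thm:IW} to reduce each block to a continuous square function. On the \emph{large} scales one goes straight to the continuous oscillation \eqref{eq:46} via Proposition~\ref{prop:msw}, and \eqref{eq:46} is handled by square-function comparison to martingales (\cite[Lemma~3.2]{jsw} and Proposition~\ref{prop:1}), with no Rademacher--Menshov step. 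Your description applies these tools in the wrong order and on the wrong side of the sampling transfer.
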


We immediately see that Theorem \ref{thm:main'} is a special case of
Theorem \ref{thm:main}. On the other hand, for every polynomial
mapping $P=(P_1,\ldots, P_d):\ZZ^{k} \rightarrow \ZZ^{d}$ as in
\eqref{polymap} there exists a set of multi-indices
$\Gamma=\NN_{\le d_0}^k\setminus \{0\}$ for some $d_0\in\ZZ_+$ such that for any  $j\in[d]$
each  $P_j$ can be written as
\begin{align*}
P_{j}(m) = \sum_{\gamma\in\Gamma} a_{j,\gamma} m^{\gamma}, \qquad m\in\ZZ^k
\end{align*}
for some coefficients $a_{j,\gamma}\in\ZZ$. Setting
\begin{equation}\label{eq:56'}
    T_{\gamma} = \prod_{j=1}^{d} T_{j}^{a_{j,\gamma}},\quad \gamma\in\Gamma,
\end{equation}
where $T_1, \ldots, T_d:X\to X$ is a family of
commuting  invertible measure preserving transformations, we see
that $A^{P}_t=A_t$, since
\[
\prod_{j=1}^dT_j^{P_j(m)}
=\prod_{j=1}^d\prod_{\gamma\in\Gamma}T_j^{a_{j,\gamma} m^{\gamma}}
=\prod_{\gamma\in\Gamma}T_{\gamma}^{m^{\gamma}}.
\]
This shows that the inequality \eqref{eq:57}  from
Theorem \ref{thm:main'} implies the corresponding inequality in Theorem
\ref{thm:main} and thus it suffices to prove Theorem \ref{thm:main'}.
  For this purpose a further reduction to the
set of integers is in order. By invoking Calder{\'o}n's transference \cite{Cald}
principle the matter is reduced to proving the following theorem.

\begin{theorem}
\label{thm:main''}
Let $k\in\ZZ_+$ and a finite nonempty set 
$\Gamma \subset \NN^{k} \setminus \{0\}$ be given.
Let
\begin{align}
\label{eq:74}
M_t f(x):= \frac{1}{\card{\Omega_{t}\cap\ZZ^{k}}} \sum_{y \in\Omega_{t}\cap\ZZ^{k}} f(x-(y)^\Gamma), \qquad x\in \ZZ^{\Gamma},
\end{align}
be an integer counterpart of $A_t$ defined in \eqref{eq:56}. 
Then for every $p\in(1, \infty)$ there exists a constant
$C_{\Gamma, k, p}>0$ such that for every $f\in L^p(X)$ we have
\begin{align}
\label{eq:57'}
\sup_{J\in\ZZ_+}\sup_{I\in \mathfrak S_J(\XX)}
\norm[\big]{O_{I, J}^2(M_{t}f:t\in\XX)}_{\ell^p(\ZZ^\Gamma)}
\le C_{\Gamma, k, p}\norm{f}_{\ell^p(\ZZ^\Gamma)}.
\end{align}
\end{theorem}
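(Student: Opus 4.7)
The plan is to follow the blueprint of \cite{MST2, MSZ3}, with the crucial new feature being that the final Rademacher--Menshov step is closed up by the martingale oscillation bound from Proposition \ref{prop:1} rather than by an $\ell^2$ square-function estimate. First I would fix a small parameter $\tau\in(0,1)$ and invoke the splitting inequality \eqref{eq:70}. This reduces \eqref{eq:57'} to two tasks: a uniform $\ell^2$-oscillation bound on $\ell^p(\ZZ^{\Gamma})$ for the lacunary family $(M_{t}:t\in\DD_{\tau})$, and a summable $\ell^2$-aggregate control of the $2$-variations across the short blocks $[2^{n^{\tau}},2^{(n+1)^{\tau}})$. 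The second of these is handled by the short-variation estimates of \cite{MST2, MSZ3}: applying Rademacher--Menshov to the $r$-variation for some $r>2$ close to $2$ produces a factor $(n^{\tau})^{1/2-1/r}$ which, combined with the polynomial decay per block, is summable in $n$ once $\tau$ is sufficiently small.

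For the lacunary task I would run the Hardy--Littlewood circle method to decompose the Fourier multiplier of $M_{2^{n^{\tau}}}$ into a major-arc approximation (from Proposition \ref{aprox}) plus a minor-arc error. The error is bounded in $\ell^p(\ZZ^{\Gamma})$ by coupling Weyl's inequality (Theorem \ref{thm:weyl-sums}) with the Magyar--Stein--Wainger sampling principle (Proposition \ref{prop:msw}), producing polynomial decay in $n$. Via the Rademacher--Menshov inequality \eqref{eq:69} one replaces the oscillation seminorm over any dyadic range $[j_{0},2^{m}]\subseteq\DD_{\tau}$ by a square sum of block-consecutive differences; the polynomial decay renders the resulting sum bounded uniformly in $m$ and in the choice of $I\in\mathfrak S_{J}(\XX)$.

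For the major-arc part, the multiplier factorizes, up to lattice-point errors controlled by Proposition \ref{prop:lattice-boundary}, as a product of a Gauss sum in a rational variable and a continuous Archimedean kernel evaluated at the integer scale. Applying the Ionescu--Wainger multiplier theorem \ref{thm:IW} allows a further decomposition along ranges of rational denominators, and after telescoping the leading piece becomes, up to a small Fourier error, the conditional expectation of $f$ with respect to a dyadic filtration on $\ZZ^{\Gamma}$. At this point Proposition \ref{prop:1} supplies the required uniform $\ell^2$-oscillation bound with constant independent of $I\in\mathfrak S_{J}(\XX)$. The residual Ionescu--Wainger tail and the telescoping corrections are absorbed by a further application of \eqref{eq:69} together with the $\ell^p$-boundedness of the Ionescu--Wainger projection.

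The main obstacle is precisely this martingale reduction: oscillation seminorms are \emph{not} preserved by general bounded Fourier multipliers, so mere $\ell^p$-boundedness of the Ionescu--Wainger projection is insufficient. One has to exhibit the sampled continuous kernel as a dyadic conditional expectation plus an error whose $\ell^2$-oscillation can be dominated by the Rademacher--Menshov square sum, while tracking constants uniformly in the scale $n$. Carrying out this bookkeeping while simultaneously preserving uniformity in the coefficients implicit in the lifting (needed to pass from Theorem \ref{thm:main''} and \ref{thm:main'} back to Theorem \ref{thm:main}) constitutes the technical heart of the argument.
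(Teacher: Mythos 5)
Your proposal follows essentially the same route as the paper: the long/short splitting \eqref{eq:70}, the circle-method decomposition with Weyl's inequality on the minor arcs, the Ionescu--Wainger decomposition along denominator shells on the major arcs, Rademacher--Menshov for the small scales, and the Magyar--Stein--Wainger sampling reduction of the large scales to the continuous oscillation inequality and thence to the martingale bound of Proposition \ref{prop:1}. The only cosmetic deviation is in the short-variation step, where the paper uses a direct $\ell^1$ kernel-difference bound via the lattice-point estimate rather than an $r$-variational Rademacher--Menshov argument, but both are standard and yield the required summable decay.
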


This reduction will allows us to use Fourier methods which are 
not available in the abstract measure spaces setting. The operators from
\eqref{eq:74} are sometimes called discrete averaging Radon operators.
From now on we will closely follow the ideas from \cite{MSZ3} to
establish \eqref{eq:57'}. Its proof will be illustrated in the next
few subsections.

\subsection{Ionescu--Wainger multiplier theorem}\label{sec:IW}
The key tool, which will be used in the proof of Theorem~\ref{thm:main''} is the
Ionescu--Wainger theorem \cite{IW}. We now recall the $d$-dimensional
vector-valued Ionescu--Wainger multiplier theorem from \cite[Section 2]{MSZ3}.

\begin{theorem}\label{thm:IW}
For every $\varrho>0$, there exists a family $(P_{\leq N})_{N\in\ZZ_+}$ of subsets of $\ZZ_+$ satisfying:  
\begin{enumerate}[label*={(\roman*)}]
\item \label{IW1} One has $[N]\subseteq P_{\leq N}\subseteq\big[\max\{N, e^{N^{\varrho}}\}\big]$.
\item \label{IW2}  If $N_1\le N_2$, then $P_{\leq N_1}\subseteq P_{\leq N_2}$.
\item \label{IW3}  If $q \in P_{\leq N}$, then all factors of $q$ also lie in $P_{\leq N}$.
\item \label{IW4}  One has $\lcm{(P_N)}\le 3^N$.
\end{enumerate}

Furthermore, for every $p \in (1,\infty)$, there exists  $0<C_{p, \varrho, d}<\infty$ such that, for every $N\in\ZZ_+$, the following holds.

Let $0<\varepsilon_N \le e^{-N^{2\varrho}}$, and let $\Theta : \RR^{d} \to L(H_0,H_1)$ be a measurable function supported on $\varepsilon_{N}\mathbf Q$, where $\mathbf Q:=[-1/2, 1/2)^d$ is a unit cube, with values in the space $L(H_{0},H_{1})$ of bounded linear operators between separable Hilbert spaces $H_{0}$ and $H_{1}$.
Let $0 \leq \mathbf A_{p} \leq \infty$ denote the smallest constant such that, for every function $f\in L^2(\RR^d;H_0)\cap L^{p}(\RR^d;H_0)$, we have
\begin{align}
\label{eq:75}
\norm[\big]{T_{\RR^d}[\Theta]f}_{L^{p}(\RR^{d};H_1)}
\leq
\mathbf A_{p} \norm{f}_{L^{p}(\RR^{d};H_0)}.
\end{align}
Then the multiplier
\begin{equation}
\label{eq:IW-mult}
\Delta_N(\xi)
:=\sum_{b \in\Sigma_{\leq N}}
\Theta(\xi - b),
\end{equation}
where $\Sigma_{\leq N}$ is $1$-periodic subsets of $\TT^d$ defined by
\begin{align}
\label{eq:42}
\Sigma_{\leq N} := \Big\{ \frac{a}{q}\in\QQ^d\cap\TT^d:  q \in P_{\leq N} \text{ and } {\rm gcd}(a, q)=1\Big\},
\end{align}
satisfies for every $f\in L^p(\ZZ^d;H_0)$ the following inequality
\begin{align}
\label{eq:76}
\norm[\big]{ T_{\ZZ^d}[\Delta_{N}]f}_{\ell^p(\ZZ^{d};H_1)}
\le C_{p,\varrho,d}
(\log N) \mathbf A_{p}
\norm{f}_{\ell^p(\ZZ^{d};H_0)}.
\end{align}
\end{theorem}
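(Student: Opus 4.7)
The plan is to split the proof into two essentially independent tasks: the combinatorial construction of the sets $P_{\le N}$ satisfying (i)--(iv), and the analytic multiplier estimate \eqref{eq:76}, which is the heart of the theorem. For the combinatorial part, I would take $P_{\le N}$ to consist of integers of controlled arithmetic complexity; for instance, integers of the form $q = p_1^{\alpha_1}\cdots p_s^{\alpha_s}$ in which the prime powers $p_i^{\alpha_i}$ and the number $s$ of distinct primes are constrained so that (i), (ii), and (iii) are built into the definition, while (iv) follows from Chebyshev's estimate $\log\lcm(P_N) \le \sum_{p^k \le N}\log p \lesssim N$. A careful balancing of the two constraints simultaneously ensures the size bound $P_{\le N}\subseteq[\max\{N,e^{N^\varrho}\}]$ and divisor-closure.

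The starting point of the analytic estimate is the Magyar--Stein--Wainger sampling principle (Proposition \ref{prop:msw}): for a single rational $a/q \in \Sigma_{\le N}$, the hypothesis $\varepsilon_N \le e^{-N^{2\varrho}}$ together with (iv) guarantees $\varepsilon_N \ll 1/q$, and hence sampling yields the uniform bound $\|T_{\ZZ^d}[\Theta(\cdot - a/q)]\|_{\ell^p\to\ell^p}\lesssim_{p,d}\mathbf A_p$. Moreover, the supports of the translates $\Theta(\cdot - a/q)$ are pairwise disjoint in $\TT^d$, so at $p = 2$ Plancherel immediately produces the clean bound $\mathbf A_2$ without any logarithmic loss. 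The naive $\ell^p$ triangle inequality over $\Sigma_{\le N}$ is however useless, since $|\Sigma_{\le N}|$ is exponentially large in $N$.

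The substantive difficulty, and the main obstacle, is extending the $\ell^2$ bound to general $p \in (1, \infty)$. The remedy, due to Ionescu and Wainger, is an arithmetic decomposition of $\Sigma_{\le N}$. Split each $q \in P_{\le N}$ as $q = q_0 q_1$ where $q_0$ is built only from primes $\le N^\varrho$ and $q_1$ from larger primes, and group the fractions $a/q \in \Sigma_{\le N}$ into dyadic blocks indexed by the magnitude of $q_1$. For each fixed $q_0$, a translation trick exploiting the coprimality of $q_0$ and $q_1$ rewrites the contribution from a dyadic block in terms of the small-denominator multiplier $\sum_{a/q_0}\Theta(\xi - a/q_0)$ composed with an $\ell^p$-bounded projection; the small-denominator piece is then controlled on $\ell^2$ by sampling and Plancherel, and the passage from $\ell^2$ to $\ell^p$ is carried out by complex interpolation against a trivial bound. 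Finally, summing the $O(\log N)$ dyadic blocks via the triangle inequality produces the logarithmic factor $\log N$ in \eqref{eq:76}. The structural properties (iii) and (iv) of $P_{\le N}$ are precisely what keep this arithmetic decomposition uniform in the coefficients of $\Theta$.
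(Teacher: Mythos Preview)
The paper does not prove this theorem; it records it as a known result, citing \cite{IW} for the original version (with loss $(\log N)^D$), \cite{M10} and \cite[Section~2]{MSZ3} for the version stated here with loss $\log N$, and \cite{TaoIW} for the removal of the logarithm altogether. So there is no in-paper argument to compare yours against.

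That said, your sketch is vague precisely at the decisive step. After splitting $q=q_0q_1$ you assert that the large-prime part becomes ``an $\ell^p$-bounded projection'' and that the small-denominator piece is then handled by ``complex interpolation against a trivial bound,'' but you do not say why the projection is $\ell^p$-bounded with a constant independent of $N$, nor what trivial bound you interpolate against. Both pieces still involve sums over exponentially many rationals, and the naive bound on either scales like a power of $|\Sigma_{\le N}|\lesssim e^{(d+1)N^\varrho}$; interpolating the Plancherel estimate against that cannot produce a logarithmic loss. The mechanism that actually controls these sums---named explicitly by the paper as ``super-orthogonality phenomena'' and ``square functions and strong-orthogonalities''---is absent from your outline. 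Concretely, in \cite{IW,M10,MSZ3} one organizes the fractions according to \emph{sets} of large prime factors (not dyadic size of $q_1$), proves that the associated pieces are strongly orthogonal on $\ell^2$ because the characters $\ex((a/q)\cdot x)$ with distinct reduced $a/q$ are orthogonal on $(\ZZ/Q\ZZ)^d$ for $Q=\lcm(P_{\le N})$, upgrades this orthogonality to $\ell^p$ via a vector-valued Khintchine/square-function argument, and only then passes to individual pieces by sampling. Without this super-orthogonality ingredient, the argument does not close.
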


An important feature of  Theorem~\ref{thm:IW} is that one can
directly transfer square function estimates from the continuous to the
discrete setting. The constant $\mathbf A_{p}$ in \eqref{eq:75} remains unchanged
 when $\Theta $ is replaced by $\Theta(A\cdot)$ for
any invertible linear transformation $A:\RR^d\to \RR^d$. By property \ref{IW1} we see
\begin{align}
\label{eq:43}
|\Sigma_{\leq N}|\lesssim e^{(d+1)N^\varrho}.
\end{align}

A scalar-valued version of Theorem \ref{thm:IW} was originally
established by Ionescu and Wainger \cite{IW} with the factor
$(\log N)^D$ in place of $\log N$ in \eqref{eq:76}. Their proof was
based on a delicate inductive argument that exploited
super-orthogonality phenomena arising from disjoint supports in the definition of \eqref{eq:IW-mult}. A somewhat different proof with factor
$\log N$ in \eqref{eq:76} was given in \cite{M10}. The latter proof,
instead of induction as in \cite{IW}, used certain recursive
arguments, which clarified the role of the underlying square functions
and strong-orthogonalities that lie behind of the proof of the Ionescu--Wainger multiplier theorem. A much broader
context of the super-orthogonality phenomena were recently
discussed in the survey article of Pierce \cite{Pierce}.  A detailed proof of Theorem \ref{thm:IW} (in the spirit of
\cite{M10}) can be found in \cite[Section 2]{MSZ3}. We also refer to
the recent remarkable paper of Tao \cite{TaoIW}, where the factor $\log N$ was
removed from \eqref{eq:76}.

An important ingredient in the proof of Theorem \ref{thm:IW} is the
sampling principle of Magyar--Stein--Wainger from \cite{MSW}. We recall this principle  as it will  play an essential role in our further discussion.

\begin{proposition}
\label{prop:msw}
Let $d\in\ZZ_+$ be fixed. There exists an absolute constant $C>0$ such that the following holds.
Let $p \in [1,\infty]$ and $q\in\ZZ_+$, and let
$B_1, B_2$ be finite-dimensional Banach spaces.  Let
$\mathfrak m : \RR^d \to L(B_1, B_2)$ be a bounded operator-valued
function supported on $q^{-1}\mathbf Q$ and denote the associated
Fourier multiplier operator over $\RR^d$ by $T_{\RR^d}[\mathfrak m]$.  Let
$\mathfrak m^{q}_{\mathrm{per}}$ be the periodic multiplier
\[
\mathfrak m^{q}_{\mathrm{per}}(\xi) : = \sum_{n\in\ZZ^d} \mathfrak m(\xi-n/q),\qquad \xi\in\TT^d.
\]
Then
\[
\|T_{\ZZ^d}[\mathfrak m^{q}_{\mathrm{per}}]\|_{\ell^{p}(\ZZ^d;B_1)\to \ell^{p}(\ZZ^d;B_2)}\le
C\|T_{\RR^d}[\mathfrak m]\|_{L^{p}(\RR^d;B_1)\to L^{p}(\RR^d;B_2)}.
\]
\end{proposition}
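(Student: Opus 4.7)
The plan is to reduce the bound from $\ZZ^d$ to $\RR^d$ in three stages, exploiting the lacunary support of $\mathfrak m$.

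\emph{Stage one: residue-class decomposition.} The multiplier $\mathfrak m^q_{\mathrm{per}}$ is by construction $(1/q)\ZZ^d$-periodic, so a direct Fourier computation shows that the convolution kernel $K_d:=\mathcal F_{\ZZ^d}^{-1}[\mathfrak m^q_{\mathrm{per}}]$ is supported on $q\ZZ^d$ (from $\hat K_d(\xi+e_k/q)=\hat K_d(\xi)$ one deduces $(e^{2\pi i x_k/q}-1)K_d(x)=0$ for every $k$ and $x$). Hence $T_{\ZZ^d}[\mathfrak m^q_{\mathrm{per}}]$ preserves the decomposition $\ZZ^d=\bigsqcup_{r\in\{0,\dots,q-1\}^d}(r+q\ZZ^d)$: writing $f=\sum_r f_r$ with $f_r$ supported in $r+q\ZZ^d$, the outputs $T_{\ZZ^d}[\mathfrak m^q_{\mathrm{per}}]f_r$ have disjoint supports, so
\[
\|T_{\ZZ^d}[\mathfrak m^q_{\mathrm{per}}]f\|_{\ell^p(\ZZ^d;B_2)}^p=\sum_r\|T_{\ZZ^d}[\mathfrak m^q_{\mathrm{per}}]f_r\|_{\ell^p(\ZZ^d;B_2)}^p,
\]
and the analogous identity holds for $f$ itself. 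By translation invariance the problem reduces to $f$ supported on $q\ZZ^d$.

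\emph{Stage two: subsampling and rescaling.} For such $f$ set $g(m):=f(qm)$, so $\|f\|_{\ell^p}=\|g\|_{\ell^p}$. A change of variable $\eta=q\xi$ in the Fourier integral, combined with the identity $\mathfrak m^q_{\mathrm{per}}(\eta/q)=\sum_{n\in\ZZ^d}\tilde{\mathfrak m}(\eta-n)$ where the rescaled multiplier $\tilde{\mathfrak m}(\zeta):=\mathfrak m(\zeta/q)$ is supported in $\mathbf Q$, yields
\[
T_{\ZZ^d}[\mathfrak m^q_{\mathrm{per}}]f(qm)=T_{\ZZ^d}[\tilde{\mathfrak m}]g(m),\qquad m\in\ZZ^d.
\]
Since the $L^p(\RR^d)$-multiplier norm is invariant under the dilation $\mathfrak m\mapsto\tilde{\mathfrak m}$, the problem reduces to bounding, for $\tilde{\mathfrak m}$ supported in $\mathbf Q$,
\[
\|T_{\ZZ^d}[\tilde{\mathfrak m}]\|_{\ell^p(\ZZ^d;B_1)\to\ell^p(\ZZ^d;B_2)}\le C_d\,\|T_{\RR^d}[\tilde{\mathfrak m}]\|_{L^p(\RR^d;B_1)\to L^p(\RR^d;B_2)}.
\]

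\emph{Stage three: sampling / lifting.} Fix a scalar Schwartz $\phi\in\mathcal S(\RR^d)$ with $\hat\phi\equiv1$ on $\mathbf Q$ and $\hat\phi$ compactly supported. For compactly supported $g\in\ell^p(\ZZ^d;B_1)$, define the lift $G(x):=\sum_{n\in\ZZ^d}g(n)\phi(x-n)$; the rapid decay of $\phi$ and H\"older's inequality give $\|G\|_{L^p(\RR^d;B_1)}\le C_d\|g\|_{\ell^p(\ZZ^d;B_1)}$ for every $p\in[1,\infty]$. Because $\hat G=\hat\phi\,\hat g$ and $\hat\phi\equiv1$ on $\supp\tilde{\mathfrak m}$, the function $H:=T_{\RR^d}[\tilde{\mathfrak m}]G$ has Fourier support in $\mathbf Q$ and satisfies $H(m)=T_{\ZZ^d}[\tilde{\mathfrak m}]g(m)$ for all $m\in\ZZ^d$. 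Writing $H=H*\eta$ for a scalar Schwartz $\eta$ with $\hat\eta\equiv1$ on $\mathbf Q$ and applying H\"older's inequality pointwise delivers the Plancherel--P\'olya sampling bound $\|H|_{\ZZ^d}\|_{\ell^p(\ZZ^d;B_2)}\le C_d\|H\|_{L^p(\RR^d;B_2)}$, and chaining the three estimates completes the proof. The main obstacle is ensuring that every implied constant is absolute, independent of $q$ and of the finite-dimensional Banach spaces $B_1,B_2$; the first two stages are exact identities with no dependence on $q$ or on $B_i$, while in stage three the convolution kernels $\phi,\eta$ are \emph{scalar} Schwartz functions, so Minkowski's inequality transfers the scalar bounds to the vector-valued setting with a constant depending only on $d$.
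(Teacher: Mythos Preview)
The paper does not supply its own proof of this proposition; it simply records the statement and defers to \cite[Corollary~2.1, p.~196]{MSW}. Your three-stage argument is correct and is in fact the standard Magyar--Stein--Wainger proof: the $1/q$-periodicity of $\mathfrak m^q_{\mathrm{per}}$ forces the discrete convolution kernel onto $q\ZZ^d$, which gives the coset reduction; the dilation $\tilde{\mathfrak m}(\zeta)=\mathfrak m(\zeta/q)$ reduces to the case $q=1$ without changing the continuous multiplier norm; and the lift-and-sample step via a scalar Schwartz function with $\hat\phi\equiv 1$ on $\mathbf Q$ (together with the Plancherel--P\'olya inequality for band-limited functions) transfers the $L^p(\RR^d)$ bound to $\ell^p(\ZZ^d)$ with a constant depending only on $d$. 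Your remark that the auxiliary kernels $\phi,\eta$ are scalar, so the vector-valued estimates follow from Minkowski's inequality with constants independent of $B_1,B_2$, is exactly the point needed for the Banach-space-valued formulation used here.
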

The proof can be found in \cite[Corollary 2.1, p. 196]{MSW}.
We also refer to \cite{MSZ1} for generalization of Proposition \ref{prop:msw} to real interpolation spaces, which in particular covers the case of jump inequalities.

\subsection{Proof of inequality \eqref{eq:57'} from Theorem~\ref{thm:main''}}
Fix $p\in(1,\infty)$ and let $f\in \ell^p(\ZZ^\Gamma)$ be a finitely supported function. We fix $p_0>1$, close to $1$ such that $p\in(p_0,p_0')$. We take $\tau\in(0,1)$ such that
\begin{equation}\label{eq:5}
    \tau<\frac{1}{2}\min\{p_0-1,1\}.
\end{equation}
Now by \eqref{eq:70} one can split \eqref{eq:57'}  into long oscillations and short variations respectively
\begin{align}\label{eq:4}
\begin{split}
\sup_{N\in\ZZ_+}\sup_{I\in\mathfrak{S}_N(\XX)}\norm{O_{I,N}^2(M_tf&: t\in\XX)}_{\ell^p(\ZZ^\Gamma)}
\\&\lesssim\sup_{N\in\ZZ_+}\sup_{I\in\mathfrak{S}_N(\DD_\tau)}\norm[\big]{O_{I,N}^2(M_tf:t\in\DD_\tau)}_{\ell^p(\ZZ^\Gamma)}\\
&\,\,+\norm[\Big]{\Big(\sum_{n=0}^\infty V^2\big(M_{t}f:t\in[2^{n^\tau},2^{(n+1)^\tau})\big)^2\Big)^{1/2}}_{\ell^p(\ZZ^\Gamma)}.
\end{split}
\end{align}

To handle the short variations we may proceed as  in \cite{MSZ3} (see also \cite{zk}) and conclude that
\begin{align}
\label{eq:8}
\MoveEqLeft
\norm[\Big]{ \Big( \sum_{n=0}^\infty V^2\big(M_{2^t}f:t\in[n^\tau,(n+1)^\tau)\big)^2\Big)^{1/2}}_{\ell^p(\ZZ^\Gamma)}
\lesssim 
\Big( \sum_{n=0}^\infty n^{-q(1-\tau)} \Big)^{1/q}\norm{f}_{\ell^p(\ZZ^\Gamma)}
 \lesssim
\norm{f}_{\ell^p(\ZZ^\Gamma)},
\end{align}
since $q(1-\tau)>1$ by \eqref{eq:5}; here $q=\min\{p,2\}$. The first inequality in \eqref{eq:8} follows from a simple observation that  for any finite sequence $t_0<t_1<\dotsc<t_J$ contained in $[n^\tau,(n+1)^\tau)$ one has 
\begin{equation*}
\sum_{j=1}^J\norm[\big]{{(M_{2^{t_j}}-M_{2^{t_{j-1}}})\ind{\{0\}}}}_{\ell^1(\ZZ^\Gamma)}
\lesssim 2^{-kn^\tau}\big|\ZZ^k\cap(\Omega_{2^{(n+1)^\tau}}\setminus\Omega_{2^{n^\tau}})\big|.
\end{equation*}
The latter quantity is controlled by
\begin{equation*}
2^{-kn^\tau}|\ZZ^k\cap(\Omega_{2^{(n+1)^\tau}}\setminus\Omega_{2^{n^\tau}})|
\lesssim n^{\tau-1},
\end{equation*}
which immediately follows by invoking the following proposition.
\begin{proposition}[{\cite[Proposition 4.16, p. 45]{MSZ3}, see also \cite[Lemma A.1, p. 554]{MSZ2}}] \label{prop:lattice-boundary}
Let $\Omega\subset\RR^{k}$ be a bounded and convex set and let
$1\leq s\leq \diam(\Omega)$. Then
\begin{equation}\label{eq:6}
\#\{x \in \ZZ^{k} : \dist(x,\partial\Omega)<s\}\lesssim_k s \diam(\Omega)^{k-1}.
\end{equation}
The implicit constant depends only on the dimension $k$, but not on the convex set $\Omega$. 
\end{proposition}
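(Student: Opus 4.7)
The plan is to pass from lattice point counting to a continuous tube volume around $\partial\Omega$, and then bound that tube volume using standard convex-geometry facts.

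\emph{Discretization step.} To each lattice point $x \in \ZZ^k$ with $\dist(x, \partial\Omega) < s$, associate the unit cube $Q_x := x + [-1/2, 1/2)^k$. These cubes are pairwise disjoint, each has unit Lebesgue measure, and every point of $Q_x$ lies within distance $s + \sqrt{k}/2$ of $\partial\Omega$. Since $s \ge 1$, the additive error $\sqrt{k}/2$ is absorbed into a dimensional constant $C_k$, so
$$\#\{x \in \ZZ^k : \dist(x, \partial\Omega) < s\} \;\le\; \bigl| T_{C_k s} \bigr|, \qquad T_r := \bigl\{y \in \RR^k : \dist(y, \partial\Omega) \le r\bigr\}.$$

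\emph{Tube volume step.} Set $D := \diam(\Omega)$ and let $B$ denote the Euclidean unit ball. For a convex body one has $T_r \subseteq (\Omega + rB) \setminus (\Omega \ominus rB)$, where $\Omega \ominus rB$ is the inner parallel set (empty once $r$ exceeds the inradius). Expanding $|\Omega + rB|$ by Steiner's formula gives
$$|T_r| \;\le\; |\Omega + rB| \,-\, |\Omega \ominus rB| \;\lesssim_k\; \sum_{j=1}^{k} r^j W_{k-j}(\Omega),$$
where the $W_j$ are the quermassintegrals of $\Omega$. Enclosing $\Omega$ in a Euclidean ball of radius $D$ and using monotonicity of quermassintegrals on convex bodies yields $W_j(\Omega) \lesssim_k D^{k-j}$, whence $|T_r| \lesssim_k \sum_{j=1}^k r^j D^{k-j} \lesssim_k r \cdot D^{k-1}$ provided $r \le D$. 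Applying this with $r = C_k s$ (note $s \le D$ by hypothesis, so $C_k s \lesssim_k D$ and the tube bound applies up to enlarging the implicit constant) completes the argument.

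The main obstacle is getting the tube bound cleanly; a fully elementary alternative that avoids Steiner's formula is to first bound the surface area by $\mathcal{H}^{k-1}(\partial\Omega) \lesssim_k D^{k-1}$ (enclose $\Omega$ in a ball of radius $D$ and use monotonicity of surface area for convex bodies), then cover $\partial\Omega$ by $\lesssim_k (D/s)^{k-1}$ Euclidean balls of radius $s$, and finally cover $T_{C_k s}$ by the corresponding concentric balls of radius $(C_k+1)s$; each contributes $\lesssim_k s^k$ volume, giving $|T_{C_k s}| \lesssim_k s \cdot D^{k-1}$, which is precisely \eqref{eq:6}.
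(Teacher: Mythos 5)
The paper itself does not prove this proposition; it is quoted verbatim from \cite[Proposition 4.16]{MSZ3} (see also \cite[Lemma A.1]{MSZ2}), so your argument has to stand on its own. Your discretization step is fine, and your final covering argument does give a correct elementary proof, with one step to be justified more carefully: passing from $\mathcal{H}^{k-1}(\partial\Omega)\lesssim_k D^{k-1}$ to a covering of $\partial\Omega$ by $\lesssim_k (D/s)^{k-1}$ balls of radius $s$ is not automatic (a total measure bound does not control covering numbers); you need in addition the lower density estimate $\mathcal{H}^{k-1}(\partial\Omega\cap B(z,\rho))\gtrsim_k\rho^{k-1}$ for $z\in\partial\Omega$, $\rho\le D$, valid for convex boundaries, or more simply the observation that the nearest-point projection onto $\overline{\Omega}$ is $1$-Lipschitz and maps the sphere of radius $2D$ centered in $\Omega$ onto $\partial\Omega$, so a covering of that sphere by $\lesssim_k(D/s)^{k-1}$ balls of radius $s$ pushes forward to one of $\partial\Omega$. (One should also dispose of the degenerate case where $\Omega$ has empty interior, so that $\partial\Omega=\overline{\Omega}$; the covering argument still applies there.)

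The primary route through Steiner's formula, as written, has a genuine gap. Steiner's formula expands $|\Omega+rB|$ and therefore controls only the outer band $|\Omega+rB|-|\Omega|$; there is no Steiner-type expansion for the inner parallel body, and your display silently bounds $|\Omega|-|\Omega\ominus rB|$ by the same polynomial. Since roughly half of the tube $T_r$ lies inside $\Omega$, the inner band needs a separate argument: for instance, the coarea formula applied to $u(y)=\dist(y,\Omega^{c})$, which satisfies $|\nabla u|=1$ a.e.\ on $\{u>0\}$, gives $|\{0<u\le r\}|=\int_0^r\mathcal{H}^{k-1}(\{u=t\})\,\mathrm{d}t\le r\,\mathcal{H}^{k-1}(\partial\Omega)\lesssim_k rD^{k-1}$ using $\{u=t\}\subseteq\partial(\Omega\ominus tB)$ and monotonicity of surface area under inclusion of convex bodies; alternatively, your covering argument handles both sides of $\partial\Omega$ at once, which is another reason to prefer it. There is also an indexing slip in the Steiner expansion: the terms should read $r^{j}W_{j}(\Omega)$ with $W_{j}$ homogeneous of degree $k-j$ (your $r^{j}W_{k-j}(\Omega)$ is not homogeneous of degree $k$), although this is cosmetic and the subsequent bound $W_{j}(\Omega)\lesssim_k D^{k-j}$ shows what you intended.
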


Now we have to bound long oscillations from \eqref{eq:4}. By Davenport's result \cite{Dav} we know that
\begin{equation*}
    \#(\Omega_{2^{n^\tau}}\cap\ZZ^k)=|\Omega_{2^{n^\tau}}|+\calO(2^{n^\tau(k-1)}).
\end{equation*}
Consequently, one has the following estimate
\begin{equation*}
\norm[\big]{M_{2^{n^\tau}}f-T_{\ZZ^\Gamma}[m_{2^{n^\tau}}]f}_{\ell^p(\ZZ^\Gamma)}\lesssim 2^{-n^\tau}\norm{f}_{\ell^p(\ZZ^\Gamma)},
\end{equation*}
where
\begin{equation*}
m_{t}(\xi):=\frac{1}{|\Omega_{t}|}\sum_{y \in\Omega_{t}\cap\ZZ^{k}}\ex(\xi\cdot (y)^\Gamma),\quad\xi\in\TT^\Gamma,\quad t\in \RR_+.
\end{equation*}
Thus we are reduced to prove
\begin{equation}\label{eq:49}
    \sup_{N\in\ZZ_+}\sup_{I\in\mathfrak S_N(\DD_\tau)}\norm[\big]{O_{I,N}^2(T_{\ZZ^\Gamma}[m_{t}]f:t\in\DD_{\tau})}_{\ell^p(\ZZ^\Gamma)}\lesssim\norm{f}_{\ell^p(\ZZ^\Gamma)}.
\end{equation}

\subsection{Proof of inequality \eqref{eq:49}}
Now, let $\chi\in(0,1/10)$. The proof of \eqref{eq:49} will require several appropriately chosen parameters. We choose $\alpha>0$ such that
\begin{equation*}
    \alpha>\left(\frac{1}{p_0}-\frac{1}{2}\right)\left(\frac{1}{p_0}-\frac{1}{\min\{p,p'\}}\right)^{-1}.
\end{equation*}
Let $u\in\ZZ_+$ be a large natural number to be specified later. We set
\begin{equation}
    \varrho:=\min\left\{\frac{1}{10u},\frac{\delta}{8\alpha}\right\},
\end{equation}
where $\delta>0$ is the exponent from the Gauss sum estimates \eqref{gaussum}. Let $S_0:=\max(2^{u\ZZ_+}\cap [1,n^{\tau u}])$.  We shall, by convenient abuse of notation, write
\begin{equation*}
    \Sigma_{\le n^{\tau u}}:=\Sigma_{\leq S_0}.
\end{equation*}
Next, for dyadic integers $S\in2^{u\ZZ_+}$ we define 
\begin{equation*}
    \Sigma_S:=\begin{cases}
    \Sigma_{\leq S},& {\rm if}\quad S=2^u,\\
    \Sigma_{\leq S}\setminus\Sigma_{\leq S/2^u},& {\rm if}\quad S>2^u.
    \end{cases}
\end{equation*}
Then it is easy to see that
\begin{equation}\label{eq:52}
     \Sigma_{\le n^{\tau u}}=\bigcup_{\substack{S\le S_0,\\S\in 2^{u\ZZ_+}}}\Sigma_S.
\end{equation}
Now we define the Ionescu--Wainger projection multipliers.
For this purpose, we introduce a diagonal matrix $A$ of size $|\Gamma| \times |\Gamma|$  such that
$(A v)_\gamma := \abs{\gamma} v_\gamma$ for any $\gamma \in \Gamma$ and $v\in\RR^\Gamma$, and for any $t > 0$ we also define
corresponding dilations by setting $t^A x=\big(t^{|\gamma|}x_{\gamma}: \gamma\in \Gamma\big)$ for
every $x\in\RR^\Gamma$. Let $\eta:\RR^\Gamma\to[0,1]$ be a smooth function such that
\begin{equation*}
    \eta(x)=\begin{cases}
    1, & |x|\le1/(32|\Gamma|), \\
    0, & |x|\ge1/(16|\Gamma|).\end{cases}
\end{equation*}

For any $n\in\ZZ_+$ we define 
\begin{equation}\label{eq:50}
\Pi_{\le n^\tau,\,n^\tau(A-\chi {\rm Id})}(\xi):=\sum_{a/q\in\Sigma_{\le n^{\tau u}}}\eta\big(2^{n^\tau(A-\chi {\rm Id})}(\xi-a/q)\big),\quad \xi\in\TT^\Gamma,
\end{equation}
as well as
\begin{equation}\label{eq:51}
\Pi_{S,\, n^\tau(A-\chi {\rm Id})}(\xi):=\sum_{a/q\in\Sigma_{S}}\eta\big(2^{n^\tau(A-\chi {\rm Id})}(\xi-a/q)\big),\quad \xi\in\TT^\Gamma,\quad S\in 2^{u\ZZ_+}.
\end{equation}
Note that Theorem~\ref{thm:IW} applies for \eqref{eq:50} and \eqref{eq:51} since 
$2^{-n^\tau(|\gamma|-\chi)}\le e^{-n^{\tau/10}}\le e^{-S_0^{2\varrho}}$ for sufficiently large $n\in\ZZ_+$. Using \eqref{eq:50} one see that
\begin{align}
\nonumber    \sup_{N\in\ZZ_+}\sup_{I\in\mathfrak S_N(\DD_\tau)}&\norm[\big]{O_{I,N}^2(T_{\ZZ^\Gamma}[m_{t}]f:t\in\DD_{\tau})}_{\ell^p(\ZZ^\Gamma)}\\
&\lesssim\sup_{N\in\ZZ_+}\sup_{I\in\mathfrak{S}_N(\NN)}\norm[\big]{O_{I,N}^2(T_{\ZZ^\Gamma}[m_{2^{n^\tau}}(1-\Pi_{\le n^\tau,\,n^\tau(A-\chi {\rm Id})})]f:n\in\NN)}_{\ell^p(\ZZ^\Gamma)}\label{eq:11}\\
    &\qquad+\sup_{N\in\ZZ_+}\sup_{I\in\mathfrak{S}_N(\NN)}\norm[\big]{O_{I,N}^2(T_{\ZZ^\Gamma}[m_{2^{n^\tau}}\Pi_{\le n^\tau,\,n^\tau(A-\chi {\rm Id})}]f:n\in\NN)}_{\ell^p(\ZZ^\Gamma)}\label{eq:12}.
\end{align}
The first and second term in the above inequality corresponds to minor and major arcs, respectively. 
\subsection{Minor arcs: estimates for \eqref{eq:11}}
A key ingredient in estimating \eqref{eq:11} will be a multidimensional Weyl's inequality.

\begin{theorem}[{\cite[Theorem A.1, p. 49]{MSZ3}}]
\label{thm:weyl-sums}
For every $k\in\ZZ_+$ and a finite nonempty set 
$\Gamma \subset \NN^{k} \setminus \{0\}$, there exists $\epsilon>0$ such that, for every polynomial
\[
P(n) = \sum_{\gamma\in\Gamma} \xi_{\gamma} n^{\gamma},
\]
every $N>1$, convex set $\Omega\subseteq B(0,N) \subset \RR^{k}$, function $\phi : \Omega\cap\ZZ^{k}\to\CC$, multi-index $\gamma_{0}\in\Gamma$, and integers $0\leq a<q$ with ${\rm gcd}(a, q) = 1$ and
\begin{align*}
\abs[\Big]{\xi_{\gamma_0} - \frac{a}{q}}
\leq
\frac{1}{q^2},
\end{align*}
we have
\begin{equation}
\label{eq:79}
\abs[\Big]{\sum_{n \in \Omega \cap \ZZ^k} \ex(P(n))\phi(n)}
\lesssim_{\Gamma,k}
N^k \kappa^{-\epsilon} \log (N+1) \norm{\phi}_{L^\infty(\Omega)} + N^{k} \sup_{\abs{x-y}\leq N\kappa^{-\epsilon}} \abs{\phi(x)-\phi(y)},
\end{equation}
where
\[
\kappa := \min \{q,N^{\abs{\gamma_{0}}}/q\}.
\]
The implicit constant in \eqref{eq:79} is independent on the coefficients of  $P$ and the numbers $a$, $q$, and $N$.
\end{theorem}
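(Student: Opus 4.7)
The plan is to split the proof into two stages: a partition-and-reference-point reduction that replaces $\phi$ by a constant on small pieces, and then the multidimensional Weyl inequality on those pieces via iterated Weyl differencing. First I would tile $\RR^k$ by disjoint cubes $Q_j$ of side length $L := N \kappa^{-\epsilon}$ for some small $\epsilon>0$ to be chosen, pick any lattice reference point $n_j \in Q_j$, and write
\[
\sum_{n \in \Omega \cap \ZZ^k} \ex(P(n))\phi(n)
=
\sum_j \phi(n_j)\sum_{n \in Q_j \cap \Omega \cap \ZZ^k} \ex(P(n))
+ \sum_j\sum_{n \in Q_j \cap \Omega \cap \ZZ^k} \ex(P(n))(\phi(n)-\phi(n_j)).
\]
The second double sum is bounded in absolute value by $|\Omega\cap\ZZ^k|\cdot\sup_{|x-y|\le L\sqrt k}|\phi(x)-\phi(y)| \lesssim N^k\sup_{|x-y|\le N\kappa^{-\epsilon}}|\phi(x)-\phi(y)|$, i.e.\ the second error term in \eqref{eq:79}. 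So matters reduce to proving the unweighted bound
\[
\Bigl|\sum_{n \in Q \cap \Omega \cap \ZZ^k} \ex(P(n))\Bigr|
\lesssim L^k \kappa^{-\epsilon'}\log(N+1)
\]
uniformly in cubes $Q$ of side $L$, where $\epsilon'>\epsilon$ is another small exponent; summing over the $\lesssim(N/L)^k$ cubes yields the first term of \eqref{eq:79}.

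For the unweighted piece I would run the classical Weyl differencing argument adapted to the multi-index $\gamma_0$. After relabeling coordinates we may assume that $\gamma_{0,1}\ge 1$, and by slicing in $n_1$ the convex set $Q\cap\Omega$ becomes an iterated sum of $(n_2,\dots,n_k)\in Q'\cap\Omega'$ over an interval $I(n_2,\dots,n_k)\subseteq[-N,N]$ in $n_1$. Applying Cauchy--Schwarz and differencing in $n_1$ iteratively $d-1$ times (where $d=\deg P$, with $\gamma_{0,1}$ of those differencings needed to deplete the $n_1$-degree of the $\xi_{\gamma_0}$-monomial down to linear, and the rest to eliminate the other monomials) produces, as is standard, an estimate of the shape
\[
\Bigl|\sum_{n\in Q\cap\Omega\cap\ZZ^k} \ex(P(n))\Bigr|^{2^{d-1}}
\lesssim
L^{k\cdot 2^{d-1}-1}
\sum_{|h_1|,\dots,|h_{d-1}|\le L}
\Bigl|\sum_{n_1 \in I_{h,n'}} \ex(\alpha(h,n')\,n_1)\Bigr|,
\]
where the linear phase $\alpha(h,n')$ is, up to lower-order terms, a nonzero integer multiple of $\xi_{\gamma_0}\cdot h_1\cdots h_{\gamma_{0,1}-1}\cdot (n')^{\gamma_0'}$ with $\gamma_0'=(\gamma_{0,2},\dots,\gamma_{0,k})$. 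The geometric estimate $|\sum_{n_1\in I}\ex(\alpha\,n_1)|\le\min(|I|,\|\alpha\|^{-1})$ combined with the hypothesis $|\xi_{\gamma_0}-a/q|\le q^{-2}$, the Dirichlet box principle applied to the products $h_1\cdots h_{\gamma_{0,1}-1}(n')^{\gamma_0'}$, and the standard estimate $\sum_{1\le m\le M}\min(N,\|m\alpha\|^{-1})\lesssim(M+N)\log N$ give the required gain $\kappa^{-\epsilon'}$ (with $\epsilon'$ depending only on $d$ and $k$) together with the $\log(N+1)$ factor, once $\epsilon$ is chosen small enough relative to $\epsilon'$.

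The main obstacle is the bookkeeping of the Weyl differencing in the multi-index setting: one must iterate differencing in each coordinate $n_i$ exactly $\gamma_{0,i}$ times in an order that kills the other monomials first, track that the discarded lower-order polynomial contributions can indeed be absorbed into negligible averaged terms, and confirm that the convexity of $\Omega$ (via its one-dimensional slices being intervals) is enough for both the interval-sum geometric bound and for the lattice point count in Dirichlet's argument. A secondary subtlety is the appearance of $\kappa=\min(q,N^{|\gamma_0|}/q)$ rather than $q$ alone: the ``$N^{|\gamma_0|}/q$'' alternative arises when the conjugate approximation $|\xi_{\gamma_0}-a'/q'|\le (q')^{-2}$ with $q'\sim N^{|\gamma_0|}/q$ gives a better estimate, which requires swapping the roles of $q$ and $N^{|\gamma_0|}/q$ in the count of small denominators after differencing.
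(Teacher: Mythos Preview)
The paper does not contain a proof of this theorem at all: it is quoted verbatim from \cite[Theorem~A.1]{MSZ3} and used as a black box in the minor arcs estimate. So there is no ``paper's own proof'' to compare against; your sketch is in effect a proposal for the argument that appears in the appendix of \cite{MSZ3}.

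As a plan, your two-stage reduction is the standard one and is sound in outline. The partition into cubes of side $L=N\kappa^{-\epsilon}$ correctly isolates the modulus-of-continuity term; just note that the diameter of a cube is $L\sqrt{k}$, so you should take $L=N\kappa^{-\epsilon}/\sqrt{k}$ (or absorb $\sqrt{k}$ into the implicit constant by slightly shrinking $\epsilon$) to literally match the second term of \eqref{eq:79}. The unweighted Weyl estimate on each cube is where the real work lies, and your description there is a bit inconsistent: you first say ``difference in $n_1$ iteratively $d-1$ times'', which would only handle the case $\gamma_0=(d,0,\dots,0)$, but then in the obstacle paragraph you correctly say one must difference $\gamma_{0,i}$ times in each coordinate $n_i$. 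The latter is what is actually needed, and carrying it out requires care because monomials $\xi_\gamma n^\gamma$ with $\gamma\neq\gamma_0$ but the same degree in some variable can survive the differencing and contaminate the linear phase; one has to order the differencings so as to kill those first, or else control their contribution via divisor-type counting. Your final remark about a ``conjugate approximation'' with $q'\sim N^{|\gamma_0|}/q$ is not how the $N^{|\gamma_0|}/q$ branch of $\kappa$ arises: it comes directly from the $q/L^{|\gamma_0|}$ term in the post-differencing count $\sum_m \min(L,\|m\xi_{\gamma_0}\|^{-1})\lesssim (M/q+1)(L+q)\log q$, not from passing to a different rational approximation. None of these are fatal, but they are the points where a complete write-up would need to be precise.
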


To estimate \eqref{eq:11} we first observe, using \eqref{eq:62}, that
\begin{align}
\label{eq:78}
\begin{gathered}
\sup_{N\in\ZZ_+}\sup_{I\in\mathfrak{S}_N(\NN)}\norm[\big]{O_{I,N}^2(T_{\ZZ^\Gamma}[m_{2^{n^\tau}}(1-\Pi_{\le n^\tau,\,n^\tau(A-\chi {\rm Id})})]f:n\in\NN)}_{\ell^p(\ZZ^\Gamma)}\\
\lesssim
\sum_{n=0}^\infty\norm{T_{\ZZ^\Gamma}[m_{2^{n^\tau}}(1-\Pi_{\le n^\tau,\,n^\tau(A-\chi {\rm Id})})]f}_{\ell^p(\ZZ^\Gamma)}.
\end{gathered}
\end{align}
Using Theorem \ref{thm:weyl-sums} and  proceeding as in \cite[Lemma 3.29, p. 34]{MSZ3} we may conclude that 
\begin{equation}\label{eq:9}
\norm{T_{\ZZ^\Gamma}[m_{2^{n^\tau}}(1-\Pi_{\le n^\tau,\,n^\tau(A-\chi {\rm Id})})]f}_{\ell^p(\ZZ^\Gamma)}
\lesssim(n+1)^{-2}\|f\|_{\ell^p(\ZZ^\Gamma)},
\end{equation}
provided that $u$ is large.
Combining \eqref{eq:78} with \eqref{eq:9} we obtain the desired bound for \eqref{eq:11}.

\subsection{Major arcs: estimates for \eqref{eq:12}}
Our aim is to prove 
\begin{align}\label{eq:44}
   \sup_{N\in\ZZ_+}\sup_{I\in\mathfrak S_N(\NN)}\norm[\big]{O_{I,N}^2(T_{\ZZ^\Gamma}[m_{2^{n^\tau}}\Pi_{\le n^\tau,\,n^\tau(A-\chi {\rm Id})}]f:n\in\NN)}_{\ell^p(\ZZ^\Gamma)}\lesssim\norm{f}_{\ell^p(\ZZ^\Gamma)}.
\end{align}
For $n\in\NN$ and $\xi\in\TT^{\Gamma}$ define a new multiplier
\begin{equation}\label{eq:38}
    {\bm m}_n(\xi):=\sum_{a/q\in\Sigma_{\le n^{\tau u}}}G(a/q)\Phi_{2^{n^\tau}}(\xi-a/q)\eta(2^{n^\tau(A-\chi {\rm Id})}(\xi-a/q)),
\end{equation}
where $\Phi_{N}$ is a continuous version of the multiplier $m_{N}$ given by
\begin{align*}
\Phi_N(\xi):=\frac{1}{|\Omega_{N}|}\int_{\Omega_{N}}{\ex(\xi\cdot(t)^\Gamma){\rm d}t},
\end{align*}
and $G(a/q)$ is the Gauss sum defined by
\begin{equation*}
G(a/q):=\frac{1}{q^k}\sum_{r\in[q]^k}{\ex((a/q)\cdot(x)^\Gamma)}.
\end{equation*}
It is well known from the multidimensional Van der Corput lemma \cite[Proposition 5, p. 342]{bigs} that
\begin{equation}\label{Phi}
    |\Phi_N(\xi)|\lesssim |N^{A}\xi|_\infty^{-1/|\Gamma|},\qquad\text{and}\qquad|\Phi_N(\xi)-1|\lesssim |N^{A}\xi|_\infty.
\end{equation}
By Theorem \ref{thm:weyl-sums} (see also \cite[Lemma 4.14, p. 44]{MSZ3}) one can easily find   $\delta\in(0, 1)$ such that 
\begin{equation}\label{gaussum}
    |G(a/q)|\lesssim_k q^{-\delta}
\end{equation}
for every $q\in\ZZ_+$ and $a=(a_{\gamma}:\gamma\in \Gamma)\in[q]^{\Gamma}$ such that ${\rm gcd}(a, q)=1$.

We claim that
\begin{align}
\label{eq:7}
    \norm[\big]{T_{\ZZ^\Gamma}[m_{2^{n^\tau}}\Pi_{\le n^\tau,\,n^\tau(A-\chi {\rm Id})}-{\bm m}_n]f}_{\ell^p(\ZZ^\Gamma)}\lesssim 2^{-n^{\tau/2}}\norm{f}_{\ell^p(\ZZ^\Gamma)}.
\end{align}
To establish \eqref{eq:7} one can proceed as in \cite[Lemma 3.38, p. 36]{MSZ3} by appealing to the proposition stated below and the estimate \eqref{eq:43}, Theorem~\ref{thm:IW}  and Proposition~\ref{prop:msw}. 

\begin{proposition}[{\cite[Proposition 4.18, p. 47]{MSZ3}}]\label{aprox}
Let $\Omega\subseteq B(0,N)\subset\RR^k$ be a convex set and  $\mathcal{K}\colon\Omega\to\CC$ be a continuous function. Then for any $q\in\ZZ_+$, $a=(a_{\gamma}:\gamma\in \Gamma)\in[q]^{\Gamma}$ such that ${\rm gcd}(a, q)=1$ and $\xi = a/q + \theta \in \RR^{\Gamma}$ we have
\begin{align*}
    &\big|\sum_{y\in\Omega\cap\ZZ^k}{\ex(\xi\cdot(y)^\Gamma)\mathcal{K}(y)}-G(a/q)\int_{\Omega}{\ex(\theta\cdot(t)^\Gamma)\mathcal{K}(t){\rm d}t}\big|\\
    &\lesssim_k\frac{q}{N}N^k\|\mathcal{K}\|_{L^\infty(\Omega)}+N^k\|\mathcal{K}\|_{L^\infty(\Omega)}\sum_{\gamma\in\Gamma}{(q|\theta_\gamma|N^{|\gamma|-1})^{\varepsilon_\gamma}}+N^k\sup_{x,y\in\Omega\colon|x-y|\le q}|\mathcal{K}(x)-\mathcal{K}(y)|,
\end{align*}
for any sequence $(\varepsilon_\gamma\colon\gamma\in\Gamma)\subseteq[0,1]$. The implicit constant is independent of $a,q,N,\theta$ and the kernel $\mathcal{K}$.
\end{proposition}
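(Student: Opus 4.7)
The plan is to reduce the oscillatory sum to residue classes modulo $q$, producing the Gauss sum factor, and then compare the remaining sums to the integral by a Riemann-sum approximation, splitting the resulting error into a boundary part (handled by Proposition~\ref{prop:lattice-boundary}) and an interior part (handled by the smoothness of the phase and the modulus of continuity of $\mathcal{K}$).

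First I would write every $y\in\ZZ^k$ uniquely as $y=qz+r$ with $r\in[q]^k$ and $z\in\ZZ^k$. Since $(qz+r)^\gamma\equiv r^\gamma\pmod q$ by the binomial expansion and $a_\gamma\in\ZZ$, one has $\ex(\xi\cdot(qz+r)^\Gamma)=\ex((a/q)\cdot(r)^\Gamma)\,\ex(\theta\cdot(qz+r)^\Gamma)$. Setting $\phi(t):=\ex(\theta\cdot(t)^\Gamma)\mathcal{K}(t)$ and using the definition $G(a/q)=q^{-k}\sum_{r\in[q]^k}\ex((a/q)\cdot(r)^\Gamma)$, the quantity to be estimated equals
\[
\sum_{r\in[q]^k}\ex((a/q)\cdot(r)^\Gamma)\Big[\sum_{z\in\ZZ^k:\,qz+r\in\Omega}\phi(qz+r)-q^{-k}\int_\Omega\phi(t)\,\dif t\Big].
\]
By the triangle inequality the problem reduces to bounding the inner Riemann-type error $E_r$ and summing $|E_r|$ over $r\in[q]^k$.

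For each fixed $r$ I would partition $\RR^k$ into tiling cubes $Q_z^{(r)}:=qz+r+[0,q)^k$ and write $q^{-k}\int_\Omega\phi=\sum_z q^{-k}\int_{Q_z^{(r)}\cap\Omega}\phi$. On a cube wholly contained in $\Omega$ the contribution is $\phi(qz+r)-q^{-k}\int_{Q_z^{(r)}}\phi$, bounded by $\sup_{t\in Q_z^{(r)}}|\phi(qz+r)-\phi(t)|$; on a cube meeting $\partial\Omega$ it is trivially at most $2\|\mathcal{K}\|_{L^\infty(\Omega)}$. Once one sums over $r\in[q]^k$, the boundary integer points $y=qz+r$ are exactly those within $q\sqrt{k}$ of $\partial\Omega$, so Proposition~\ref{prop:lattice-boundary} (applied with $s\asymp q$, and a trivial bound $\lesssim N^k\|\mathcal{K}\|_{L^\infty(\Omega)}$ in the degenerate range $q\gtrsim N$) produces the first term $(q/N)N^k\|\mathcal{K}\|_{L^\infty(\Omega)}$.

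The main obstacle is the interior contribution, where the tunable exponents $\varepsilon_\gamma$ must appear. For $y=qz+r$ and $t\in Q_z^{(r)}$ one has $|y-t|\lesssim q$, so
\[
|\phi(y)-\phi(t)|\le\|\mathcal{K}\|_{L^\infty(\Omega)}|\ex(\theta\cdot((y)^\Gamma-(t)^\Gamma))-1|+\sup_{|x-x'|\le q}|\mathcal{K}(x)-\mathcal{K}(x')|.
\]
The mean value theorem yields $|y^\gamma-t^\gamma|\lesssim_k N^{|\gamma|-1}q$ for $y,t\in\Omega\subseteq B(0,N)$, whence $|\theta\cdot((y)^\Gamma-(t)^\Gamma)|\lesssim_k\sum_{\gamma\in\Gamma}q|\theta_\gamma|N^{|\gamma|-1}$. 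Combining $|e^{iA}-1|\le 2\min(1,|A|)$ with the elementary interpolation $\min(1,x)\le x^{\varepsilon_\gamma}$ valid for all $x\ge 0$ and $\varepsilon_\gamma\in[0,1]$, and distributing across the sum over $\gamma$, one gets $|\ex(\theta\cdot((y)^\Gamma-(t)^\Gamma))-1|\lesssim\sum_{\gamma\in\Gamma}(q|\theta_\gamma|N^{|\gamma|-1})^{\varepsilon_\gamma}$. Since the total number of interior integer points summed over $r$ is at most $|\Omega\cap\ZZ^k|\lesssim N^k$, this produces the second and third terms and completes the bound. The essence of the difficulty is exactly this interpolation step: each exponent $\varepsilon_\gamma$ must be free to be tuned independently depending on whether the frequency $q|\theta_\gamma|N^{|\gamma|-1}$ is small or large, which is what allows the estimate to plug into the Weyl inequality and the Ionescu--Wainger projection scheme in Theorem~\ref{thm:main''}; maintaining the correct powers of $q$, $N$, and $N^{|\gamma|-1}$ through the mean value theorem is the most error-prone piece of bookkeeping.
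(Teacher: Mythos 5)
Your argument is correct and is essentially the same as the proof of the cited result (Proposition 4.18 in [MSZ3]): reduction to residue classes mod $q$ to extract the Gauss sum, a Riemann-sum comparison cube by cube, the lattice-point count near $\partial\Omega$ for the term $qN^{k-1}$, and the mean value theorem combined with $\min(1,x)\le x^{\varepsilon_\gamma}$ for the oscillatory and continuity terms. The only (harmless) cosmetic points are that your modulus-of-continuity term appears at scale $q\sqrt{k}$ rather than $q$, which is absorbed by a chaining argument along segments in the convex set $\Omega$, and that the degenerate regime $q\gtrsim N$ is handled by the trivial bound exactly as you indicate.
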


Using \eqref{eq:7} and \eqref{eq:62} the proof of the inequality \eqref{eq:44} is reduced to showing
\begin{align}
\label{eq:44'}
   \sup_{N\in\ZZ_+}\sup_{I\in\mathfrak S_N(\NN)}\norm[\big]{O_{I,N}^2(T_{\ZZ^\Gamma}[{\bm m}_{n}]f:n\in\NN)}_{\ell^p(\ZZ^\Gamma)}\lesssim\norm{f}_{\ell^p(\ZZ^\Gamma)}.
\end{align}
\subsection{Major arcs: estimates for \eqref{eq:44'}}
For $n\in\ZZ_+$, $S\in 2^{u\ZZ_+}$ and $\xi\in\TT^{\Gamma}$  we define a new multiplier
\begin{equation}\label{eq:20}
{\bm m}_n^S(\xi):=\sum_{a/q\in\Sigma_S}G(a/q)\Phi_{2^{n^\tau}}(\xi-a/q)\eta\big(2^{n^\tau(A-\chi {\rm Id})}(\xi-a/q)\big).
\end{equation}
Then using \eqref{eq:52} and \eqref{eq:20} we see that to prove \eqref{eq:44'}
it suffices to show that
\begin{align}
\sup_{N\in\ZZ_+}\sup_{I\in\mathfrak{S}_N(\NN_{\ge S^{1/(\tau u)}})}\norm[\big]{O_{I,N}^2(T_{\ZZ^\Gamma}[{\bm m}_{n}^S]f:n\in \NN_{\ge S^{1/(\tau u)}})}_{\ell^p(\ZZ^\Gamma)}\lesssim S^{-4\varrho}\norm{f}_{\ell^p(\ZZ^\Gamma)},\label{eq:21}
\end{align}
since $S^{-4\varrho}$ is summable in $S\in 2^{u\ZZ_+}$.

To establish \eqref{eq:21} we define $\tilde{\eta}(x):=\eta(x/2)$ and  two new multipliers
\begin{align*}
\nu_n^S(\xi)&:=\sum_{a/q\in\Sigma_S}\Phi_{2^{n^\tau}}(\xi-a/q)\eta\big(2^{n^\tau(A-\chi {\rm Id})}(\xi-a/q)\big),\\
\mu_S(\xi)&:=\sum_{a/q\in\Sigma_S}G(a/q)\Tilde{\eta}\big(2^{S^{1/u}(A-\chi {\rm Id})}(\xi-a/q)\big).
\end{align*}
Obviously, we have ${\bm m}_n^S=    \nu_n^S\mu_S$ and we see that the estimate \eqref{eq:21}  will follow if we show that
\begin{align}
\norm[\big]{T_{\ZZ^\Gamma}[\mu_S]f}_{\ell^p(\ZZ^\Gamma)}&\lesssim S^{-7\varrho}\norm{f}_{\ell^p(\ZZ^\Gamma)},\label{eq:22}\\
\sup_{N\in\ZZ_+}\sup_{I\in\mathfrak{S}_N(\NN_{\ge S^{1/(\tau u)}})}\norm[\big]{O_{I,N}^2(T_{\ZZ^\Gamma}[\nu_n^S]f:n\in \NN_{\ge S^{1/(\tau u)}})}_{\ell^p(\ZZ^\Gamma)}
&\lesssim S^{3\varrho}\norm{f}_{\ell^p(\ZZ^\Gamma)}\label{eq:23}.
\end{align}
Using Proposition \ref{aprox}, Theorem \ref{thm:IW} and the Gauss sum estimates \eqref{gaussum} we can easily establish \eqref{eq:22}, we refer to \cite[Lemma 3.47 and estimate (3.49) pp. 38--39]{MSZ3} for more details.  Now we return to \eqref{eq:23}. We define $\kappa_S:=\ceil{S^{2\varrho}}$ and split \eqref{eq:23} into small and large scales respectively
\begin{align*}
\text{LHS of }\eqref{eq:23}\lesssim&\sup_{N\in\ZZ_+}\sup_{I\in\mathfrak S_N(\DD^\tau_{\le S})}\norm[\big]{O_{I,N}^2(T_{\ZZ^\Gamma}[\nu_n^S]f:n\in \DD^\tau_{\le S})}_{\ell^p(\ZZ^\Gamma)}\\
&+\sup_{N\in\ZZ_+}\sup_{I\in\mathfrak S_N(\DD^\tau_{\ge S})}\norm[\big]{O_{I,N}^2(T_{\ZZ^\Gamma}[\nu_n^S]f:n\in \DD^\tau_{\ge S})}_{\ell^p(\ZZ^\Gamma)},
\end{align*}
where $\DD^\tau_{\le S}:=\{n\in\ZZ_+:n\in[S^{1/(\tau u)},2^{\kappa_S+1}]\}$ and  $\DD^\tau_{\ge S}:=\{n\in\ZZ_+:n \ge2^{\kappa_S}\}$. 
\subsection{Major arcs: small scales estimates}
Our aim is to prove 
\begin{equation}\label{eq:14}
\sup_{N\in\ZZ_+}\sup_{I\in\mathfrak S_N(\DD^\tau_{\le S})}\norm[\big]{O_{I,N}^2(T_{\ZZ^\Gamma}[\nu_n^S]f:n\in \DD^\tau_{\le S})}_{\ell^p(\ZZ^\Gamma)}\lesssim \kappa_S\log(S)\norm{f}_{\ell^p(\ZZ^\Gamma)}.
\end{equation}
Using the Rademacher--Menschov inequality from \eqref{eq:69} one has
\begin{equation*}
    \text{ LHS of }\eqref{eq:14}\lesssim\sum_{i=0}^{\kappa_S+1}\norm[\bigg]{\Big(\sum_{j}\big|\sum_{n\in U_j^i}T_{\ZZ^\Gamma}[\nu_{n+1}^S-\nu_n^S]f\big|^2\Big)^{1/2}}_{\ell^p(\ZZ^\Gamma)},
\end{equation*}
where $j\in\ZZ_+$ runs over the set of integers such that $U_j^i=[j2^i,(j+1)2^i)\subseteq [S^{1/(\tau u)},2^{\kappa_S+1}]$.
By Theorem \ref{thm:IW} it suffices to justify that uniformly in $0\le i \le \kappa_S + 1$ we have
\begin{align}
\begin{gathered}
\norm[\bigg]{\Big(\sum_{j}\big|\sum_{n\in U_j^i}T_{\RR^d}\big[\Phi_{2^{(n+1)^\tau}}\eta(2^{(n+1)^\tau(A-\chi {\rm Id})}\cdot)-\Phi_{2^{n^\tau}}\eta(2^{n^\tau(A-\chi {\rm Id})}\cdot)\big]f\big|^2\Big)^{1/2}}_{L^p(\RR^\Gamma)}\\
\lesssim \norm{f}_{L^p(\RR^\Gamma)}.
\end{gathered}
\end{align}
This in turn is a fairly straightforward matter by  appealing to \eqref{Phi} and standard arguments from the Littlewood--Paley theory. We refer to \cite{MSZ2} for more details, see also discussion below \cite[Theorem 4.3, p. 42]{MSZ3}.  

\subsection{Major arcs: large scales estimates}
Finally our aim is to prove
\begin{equation}\label{eq:30}
\sup_{N\in\ZZ_+}\sup_{I\in\mathfrak S_N(\DD^\tau_{\ge S})}\norm[\big]{O_{I,N}^2(T_{\ZZ^\Gamma}[\nu_n^S]f:n\in \DD^\tau_{\ge S})}_{\ell^p(\ZZ^\Gamma)}\lesssim\log(S)\norm{f}_{\ell^p(\ZZ^\Gamma)}.
\end{equation}
Proceeding as in \cite[Section 3.6, pp. 40--41]{MSZ3} and invoking Proposition \ref{prop:msw} the estimate \eqref{eq:30} is reduced to showing for every $p\in(1, \infty)$ and $f\in L^p(\RR^{\Gamma})$ the following  uniform oscillation inequality
\begin{equation}\label{eq:46}
\sup_{N\in\ZZ_+}\sup_{I\in\mathfrak S_N(\RR_+)}\norm[\big]{O_{I,N}^2(T_{\RR^d}[\Phi_t]f:t\in\RR_+)}_{L^p(\RR^d)}\lesssim\norm{f}_{L^p(\RR^d)}.
\end{equation}
The inequality \eqref{eq:46} can be reduced to the martingale setting from
Proposition \ref{prop:1} by invoking square function arguments
\cite[Lemma 3.2, p. 6722]{jsw} and the standard Littlewood--Paley
theory. The details may be found in \cite{MSZ2}. This completes the proof of Theorem
\ref{thm:main''}.

\section{Main counterexample: Proof of Theorem \ref{thm:counter}} \label{sec:basic}
In this section we prove our main counterexample.
We introduce the definition of the sequential
$\lambda$-jump counting function. Let $\II\subseteq\RR$, and for a
given increasing sequence $I=(I_j: j\in\NN)\subseteq\II$ and any
$\lambda>0$ the sequential $\lambda$-jump counting function of a
function $f : \I \to \C$ is defined by
\begin{align}
\label{eq:15}
N_{\lambda,I}f:=N_{\lambda,I}(f(t):t\in\II)
:= 
\# \big\{ k \in\N : \sup_{\substack{I_k\le t< I_{k+1}\\t\in\II}}  |f(t) - f(I_k)| \ge \lambda \big\}.
\end{align} 
Then it is easy to see that $N_{\lambda, I} f \le 
\mathcal{N}_{\lambda, I} f \le N_{\lambda/2,I} f$, where 
\begin{align*}
\calN_{\lambda,I}f:=\mathcal{N}_{\lambda, I}(f(t):t\in\II)
:= 
\# \{ k \in\N : \sup_{\substack{I_k\le s,t<I_{k+1}\\s,t\in\II}}  |f(t) - f(s)| \ge \lambda \}.
\end{align*}

Let $\II\subseteq\RR$, and 
an increasing sequence $I=(I_j: j\in\NN)\subseteq\II$ be given. 
For any function $f : \I \to \C$ and $\lambda>0$ we record the following inequality 
\begin{align}
\label{eq:16}
N_{\lambda,I}f\le \calN_{\lambda-\varepsilon}f, \qquad 0 < \varepsilon < \lambda,
\end{align}
with $\calN_{\lambda}f$ defined in Remark~\ref{rem:3}.
\begin{remark}
Note that the sequential $\lambda$-jump counting function \eqref{eq:15} corresponds to the $r$-oscillations in a similar way as the standard $\lambda$-jump counting function \eqref{eq:71}  corresponds to the $r$-variations. Namely,  for any $\lambda>0$, and  $I\in\mathfrak{S}_\infty(\II)$ and $r\ge1$ one has the following pointwise inequality
\begin{equation}\label{eq:200}
\lambda N_{\lambda,I} (f(t) : t \in\II)^{1/r}  \le O^r_{I,\infty} (f(t) : t \in\II),
\end{equation}
which is a straightforward consequence of \eqref{eq:13} and the definition of $N_{\lambda,I}f$. 
\end{remark}
We have the following counterpart of the inequality \eqref{estt1} but stated in terms of the sequential $\lambda$-jump counting function and the oscillation seminorm.
\begin{lemma} \label{lem:oscmodjum}
Let $(X,\mathcal{B}(X),\mu)$ be a $\sigma$-finite measure space and $\I\subseteq\RR$. Fix $p \in [1,\8]$, and $1 \le \rho < r \le \infty$. Then for every measurable function $f : X \times \I \to \C$ we have the estimate
\begin{align} \label{est6}
\sup_{I\in\mathfrak{S}_\infty(\II)}
\big\| O^{r}_{I,\infty} \big( f(\cdot, t) : t \in \I \big) \big\|_{L^{p,\8} (X)}
\lesssim_{p,\rho,r}\sup_{I\in\mathfrak{S}_\infty(\II)} \sup_{\lambda>0} \big\| \lambda N_{\lambda,I}(f(\cdot, t): t\in \I)^{1/\rho} \big\|_{L^{p,\8}(X)}.
\end{align}
\end{lemma}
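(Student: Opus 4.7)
The plan is to fix $I \in \mathfrak{S}_\infty(\II)$, prove the per-$I$ weak-type bound
$\|O^r_{I,\infty}(f(\cdot,t):t\in\II)\|_{L^{p,\infty}(X)} \lesssim_{p,\rho,r} \tilde M_I$ where
$\tilde M_I:=\sup_{\lambda>0}\|\lambda N_{\lambda,I}(f(\cdot,t):t\in\II)^{1/\rho}\|_{L^{p,\infty}(X)}$,
and conclude by taking the supremum over $I$ on both sides. Setting
$\alpha_j(x):=\sup_{t\in[I_j,I_{j+1})\cap\II}|f(x,t)-f(x,I_j)|$, the identities $O^r_{I,\infty}f(x)^r=\sum_j\alpha_j(x)^r$ and $N_{\lambda,I}f(x)=\#\{j:\alpha_j(x)\ge\lambda\}$ identify $N_{\lambda,I}f(x)$ as the literal distribution function of the sequence $(\alpha_j(x))_j$; a dyadic decomposition of the layer-cake integral $\int_0^\infty r\lambda^{r-1} N_{\lambda,I}f(x)\,\dif\lambda$ then yields the pointwise comparison
\begin{equation*}
\tfrac{1}{2^r-1}\,O^r_{I,\infty}f(x)^r \;\le\; \sum_{k\in\ZZ}2^{kr}N_{2^k,I}f(x) \;\le\; \tfrac{2^r}{2^r-1}\,O^r_{I,\infty}f(x)^r,
\end{equation*}
while the definition of $\tilde M_I$ gives the distributional estimate $\mu(\{N_{\lambda,I}f>t\})\le \tilde M_I^p\,\lambda^{-p}\,t^{-p/\rho}$ for every $\lambda,t>0$.

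For a fixed $s>0$, the target $s^p\mu(\{O^r_{I,\infty}f>s\})\lesssim \tilde M_I^p$ is obtained by splitting $\{O^r_{I,\infty}f>s\}\subseteq E_1(s)\cup E_2(s)$, where $E_1(s):=\{\max_j\alpha_j\ge cs\}$ for a small constant $c$ to be fixed. Since $E_1(s)=\{N_{cs,I}f\ge 1\}$, the distributional estimate directly gives $\mu(E_1(s))\le \tilde M_I^p(cs)^{-p}$. On $E_2(s):=\{O^r_{I,\infty}f>s\}\setminus E_1(s)$ every $\alpha_j(x)<cs$, so $N_{2^k,I}f(x)=0$ whenever $2^k\ge cs$; setting $K_0:=\lfloor\log_2(cs)\rfloor$, the dyadic comparison therefore forces $\sum_{k\le K_0}2^{kr}N_{2^k,I}f(x)\ge s^r/(2^r-1)$ on $E_2(s)$.

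To control $\mu(E_2(s))$, I would introduce geometric weights $\beta_k:=(1-2^{-\gamma})\,2^{(k-K_0)\gamma}$ for $k\le K_0$, parametrized by some $\gamma\in(0,r-\rho)$, so that $\sum_{k\le K_0}\beta_k=1$. If the truncated sum exceeds $s^r/(2^r-1)$ at $x$, then there exists $k\le K_0$ with $2^{kr}N_{2^k,I}f(x)>\beta_k s^r/(2^r-1)$; a union bound combined with the distributional estimate yields
\begin{equation*}
\mu(E_2(s))\;\lesssim\; \tilde M_I^p\, s^{-pr/\rho}\sum_{k\le K_0}\beta_k^{-p/\rho}\,2^{kp(r-\rho)/\rho}.
\end{equation*}
Substituting $\beta_k$ turns this into a one-sided geometric series with ratio $2^{-(r-\rho-\gamma)p/\rho}<1$, summed toward $k\to-\infty$; its total is $\sim 2^{K_0 p(r-\rho)/\rho}\sim s^{p(r-\rho)/\rho}$, producing $\mu(E_2(s))\lesssim \tilde M_I^p s^{-p}$. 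Adding the bounds on $E_1(s)$ and $E_2(s)$ and taking the supremum over $s$ then over $I$ finishes the proof.

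The main technical obstacle is the dyadic weight bookkeeping: the family $(\beta_k)_{k\le K_0}$ must simultaneously sum to one, produce a convergent geometric tail as $k\to-\infty$, and leave behind exactly the power of $s$ required to cancel $s^{-pr/\rho}$ down to $s^{-p}$. This is possible precisely because $r>\rho$, which is what makes the ratio $2^{-(r-\rho-\gamma)p/\rho}$ strictly less than one for a suitable $\gamma\in(0,r-\rho)$; at the endpoint $r=\rho$ the argument collapses, mirroring the corresponding obstruction in the proof of the variational analogue \eqref{estt1} from \cite{MSZ1}.
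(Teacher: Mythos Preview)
Your argument is correct and is precisely the adaptation of \cite[Lemma~2.3]{MSZ1} that the paper invokes: for a fixed sequence $I$, the identities $O^r_{I,\infty}f(x)^r=\sum_j\alpha_j(x)^r$ and $N_{\lambda,I}f(x)=\#\{j:\alpha_j(x)\ge\lambda\}$ put the oscillation seminorm and the sequential jump counting function in exactly the same relationship as $V^r$ and $N_\lambda$ in that lemma, after which the dyadic layer-cake comparison and the weighted union-bound argument go through verbatim. The paper itself omits the details and simply refers to \cite{MSZ1}, so your write-up is in fact more explicit than the original.
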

\begin{proof}
The proof is a repetition of the arguments from \cite[Lemma 2.3, p. 805]{MSZ1}. We omit the details.
\end{proof}
Now we can state the main result of this section.
\begin{lemma}\label{lem:1}
Let $1 \le p < \infty$ and $1 < \rho \le r < \infty$ be fixed. It is \textbf{not true} that the estimate
\begin{align} \label{est21}
\sup_{\lambda > 0} \| \lambda N_{\lambda}(f(\cdot,t):t\in\NN)^{1/r} \|_{\ell^{p,\infty}(\ZZ)} 
\le
C_{p,\rho,r}
\sup_{I\in\mathfrak S_\infty (\NN)} 
 \| O_{I,\infty}^{\rho}(f(\cdot,t):t\in\NN) \|_{\ell^{p}(\ZZ)}
\end{align}
holds uniformly
for every measurable function $f \colon \ZZ \times \NN \to \RR$.
\end{lemma}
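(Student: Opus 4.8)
The plan is to construct an explicit counterexample that defeats \eqref{est21} by exploiting the crucial asymmetry: the left-hand side measures jumps \emph{globally} across the whole sequence, whereas the right-hand side measures oscillations only \emph{relative to the fixed sampling times} $I_j$. The key idea is to build $f(x,\cdot)$ so that, for each $x$, the sequence visits a large value exactly once and then returns to (and stays at) zero. Such a profile has a nontrivial $\rho$-oscillation only in the single block containing the spike, so $O^\rho_{I,\infty}$ stays bounded uniformly in $I$, while $N_\lambda$ records one full jump at the relevant scale $\lambda$; summing these over many values of $x$ across geometrically-spaced amplitudes will make the left side diverge. This is precisely the mechanism of Lemma \ref{lem:CEvarjum}, transplanted from the $V^r$ versus $N_\lambda$ comparison to the $N_\lambda$ versus $O^\rho$ comparison, but now over a two-dimensional index set $\ZZ\times\NN$ so that one can stack infinitely many single-spike profiles.

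Concretely, I would take a partition of $\ZZ$ into consecutive blocks $(B_m: m\in\ZZ_+)$ with $\#B_m \simeq 2^m$, assign to all $x\in B_m$ a common amplitude $a_m$ (to be chosen, say $a_m = 2^{-m/(p\rho)}$ or a suitable power so the bookkeeping works out), and define $f(x,1)=a_m$, $f(x,n)=0$ for $n\ge 2$, when $x\in B_m$ — exactly as in the proof of Lemma \ref{lem:CEvarjum}, but with the amplitude now depending on the block. For the right-hand side: for any increasing sequence $I\in\mathfrak S_\infty(\NN)$ and any $x$, the quantity $\sup_{t\in[I_j,I_{j+1})}|f(x,t)-f(x,I_j)|$ is at most $a_m$, and is nonzero for at most one or two indices $j$ (depending on whether $1$ is itself a sampling time), so $O^\rho_{I,\infty}(f(x,\cdot)) \lesssim a_m$ for $x\in B_m$; hence $\|O^\rho_{I,\infty}(f(\cdot,t):t\in\NN)\|_{\ell^p(\ZZ)}^p \lesssim \sum_m \#B_m\, a_m^p \simeq \sum_m 2^m a_m^p$, uniformly in $I$. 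For the left-hand side one computes, exactly as in Lemma \ref{lem:CEvarjum}, that $N_\lambda(f(x,\cdot)) = \mathds 1\{\lambda \le a_m\}$ for $x\in B_m$, so taking $\lambda$ slightly below a fixed $a_{m_0}$ gives $\|\lambda N_\lambda^{1/r}\|_{\ell^{p,\infty}(\ZZ)} \gtrsim a_{m_0}\big(\sum_{m\le m_0}\#B_m\big)^{1/p} \gtrsim a_{m_0} 2^{m_0/p}$. Choosing amplitudes so that $\sum_m 2^m a_m^p < \infty$ (convergent) while $\sup_{m_0} a_{m_0}^p 2^{m_0} = \infty$ (unbounded) is impossible with a \emph{single} sequence, so instead — mirroring the endgame of Lemma \ref{lem:CEvarjum} — I would fix the right-hand side to be finite by taking $a_m = 2^{-m/p}m^{-2/p}$, say, whence $\sum_m 2^m a_m^p = \sum_m m^{-2} < \infty$, while $a_{m_0}^p 2^{m_0} = m_0^{-2}\to 0$; this does \emph{not} yet give a contradiction, which signals that the clean amplitude choice must instead make the jump side win. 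The correct normalization is to arrange $2^m a_m^p \to 0$ summably but with partial sums $\sum_{m\le m_0} 2^m a_m^p$ essentially constant in scale, e.g. by spreading: let the amplitude $a$ take value $2^{-j}$ on a family of blocks whose sizes are tuned so that the jump contribution at level $\lambda\approx 2^{-j}$ is $\gtrsim 1$ for every $j$ while the oscillation contribution of each level is summable in $j$; then the left side of \eqref{est21} is $\gtrsim \sup_j 1 = 1$ times an unbounded constant obtained by letting the number of levels tend to infinity, and the right side stays $O(1)$, contradicting any fixed $C_{p,\rho,r}$.

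The main obstacle is precisely this last balancing act: one must choose the block sizes and amplitude levels so that each dyadic jump level $\lambda \simeq 2^{-j}$ individually forces $\|\lambda N_\lambda^{1/r}\|_{\ell^{p,\infty}}\gtrsim c$ for a constant $c$ \emph{independent of $j$}, while the total $\ell^p$-mass of oscillations, $\sum_j (\text{mass at level }j)$, remains bounded — the tension being that $\ell^{p,\infty}$ on the left sees only the single worst level whereas $\ell^p$ on the right must pay for \emph{all} of them. This is exactly why the weak-type norm on the left is essential (just as $L^{p,\infty}$ was essential in \eqref{estt1} and Lemma \ref{lem:CEvarjum}), and the construction is a two-parameter elaboration of the one-parameter example already given. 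The remaining verifications — that $O^\rho_{I,\infty}$ is controlled by the single active block uniformly over $I\in\mathfrak S_\infty(\NN)$, and the explicit evaluation of $N_\lambda$ for a single-spike profile — are routine and identical in spirit to the computations in the proof of Lemma \ref{lem:CEvarjum}. Finally, \eqref{eq:19} follows from the negation of \eqref{est21} together with the pointwise bound \eqref{eq:200} (applied with the roles of the parameters as in Theorem \ref{thm:counter}) and the elementary inequality $N_\lambda f \le \mathcal N_\lambda f \le N_{\lambda/2}f$ from Remark \ref{rem:3}, so no separate argument is needed there.
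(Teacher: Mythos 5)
Your opening diagnosis of the asymmetry is correct: the left side counts jumps globally while the right side only sees deviations from the fixed reference points $I_j$. But your construction never exploits it. For a single-spike profile ($f(x,1)=a_m$, $f(x,n)=0$ otherwise) one has $N_\lambda(f(x,\cdot))\le 2$ for every $\lambda$, so $\sup_{\lambda>0}\lambda N_\lambda(f(x,\cdot))^{1/r}\simeq a_m \simeq O^{\rho}_{I,\infty}(f(x,\cdot))$ \emph{pointwise in $x$, for every $I$}. Hence the left side of \eqref{est21} is dominated by $\|O^{\rho}_{I,\infty}f\|_{\ell^{p,\infty}}\le\|O^{\rho}_{I,\infty}f\|_{\ell^{p}}$, and \eqref{est21} holds trivially for your entire class of examples, no matter how the amplitudes and block sizes are tuned. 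Your own computation already shows this: the left side is $\sup_{m_0}a_{m_0}^p 2^{m_0}$ and the right side is $\sum_m a_m^p 2^m$, i.e.\ a supremum of terms versus the sum of the same terms. The "spreading" fix you sketch does not escape this, since it keeps $N_\lambda\le 2$ pointwise; a short Abel-summation check shows that forcing the jump contribution to be $\gtrsim 1$ at every dyadic level $\lambda\simeq 2^{-j}$ forces the oscillation sum to diverge. The weak-type/strong-type mismatch alone cannot produce the counterexample when the two pointwise quantities are comparable.

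The missing idea is that $f(x,\cdot)$ must make \emph{many} jumps that a single sequence $I$ cannot individually register: the paper uses sawtooth profiles of amplitude $2^{-k}$ and period $2^{k+1}$ oscillating $\sim 2^{j+k}$ times over $[0,2^{M+j}]$, so that $N_{2^{-k}}f\simeq 2^{j+k}$ is huge, while any block $[I_l,I_{l+1})$ contributes only one term $\le 2^{-k}$ to the oscillation regardless of how many full oscillations it contains. The heart of the proof is then a counting/budget argument (the paper's Lemma \ref{lem:2}): detecting $2^W$ sequential jumps of height $2^{-N}$ at scale $k$ forces $\sim 2^W$ gaps $I_{l+1}-I_l\ge 2^{k-N}$, and since the total length of $I$ inside $[0,2^{M+j}]$ is at most $2^{M+j}$, a single $I$ cannot afford to detect many jumps at many scales $k$ simultaneously; this is where the stratified measure space ($\mu(\{(2^k,n)\})=2^k$) and the three cases $p=\rho$, $p>\rho$, $p<\rho$ enter. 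None of this machinery is present or replaceable in your sketch, so the argument as proposed cannot be completed. (Your last remark, that \eqref{eq:19} follows from the failure of \eqref{est21} via \eqref{eq:72}, is fine and matches the paper's Corollary \ref{cor:1}.)
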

Before we prove Lemma~\ref{lem:1} let us state its consequences.
\begin{corollary} \label{cor:1}
Let $1 \le p < \infty$ and $1 < \rho \le r < \infty$ be fixed. Then the following estimates are \textbf{not true} uniformly
for all measurable function $f \colon \ZZ \times \NN \to \RR$:
\begin{align*}
    \| V^r(f(\cdot,t):t\in\NN) \|_{\ell^{p,\infty}(\ZZ)} 
&\lesssim_{p,\rho,r}
\sup_{I\in\mathfrak S_\infty (\NN)} 
 \| O_{I,\infty}^{\rho} (f(\cdot,t):t\in\NN) \|_{\ell^{p}(\ZZ)},\\
 \| V^r(f(\cdot,t):t\in\NN) \|_{\ell^{p,\infty}(\ZZ)} 
&\lesssim_{p,\rho,r}
\sup_{I\in\mathfrak S_\infty (\NN)}  
\sup_{\lambda > 0} \| \lambda (N_{\lambda,I}(f(\cdot,t):t\in\NN)^{1/\rho} \|_{\ell^{p}(\ZZ)},\\
\sup_{\lambda > 0} \| \lambda (N_{\lambda}(f(\cdot,t):t\in\NN)^{1/r} \|_{\ell^{p,\infty}(\ZZ)} 
&\lesssim_{p,\rho,r}
\sup_{I\in\mathfrak S_\infty (\NN)} 
\sup_{\lambda > 0} \| \lambda (N_{\lambda,I}( f(\cdot,t):t\in\NN)^{1/\rho} \|_{\ell^{p}(\ZZ)}.
\end{align*}
\end{corollary}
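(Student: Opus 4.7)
The plan is to disprove \eqref{est21} by exhibiting an explicit sequence of measurable functions $f_N \colon \ZZ \times \NN \to \RR$ such that the ratio $\mathrm{LHS}(f_N)/\mathrm{RHS}(f_N)$ tends to infinity with $N$. The underlying conceptual observation is that, at the pointwise level, the quantity $\sup_{\lambda>0} \lambda N_\lambda(f(x,\cdot))^{1/r}$ is always comparable (up to constants) to $\sup_I O^\rho_{I,\infty}(f(x,\cdot))$ through the $\rho$-variation $V^\rho$. Thus a genuinely pointwise chain of inequalities cannot fail. The obstruction to \eqref{est21} must come from the asymmetry in the norms: on the right-hand side the supremum over sequences $I\in\mathfrak S_\infty(\NN)$ is taken \emph{outside} the $\ell^p$ norm, producing a quantity that is typically much smaller than $\|\sup_I O^\rho_{I,\infty}(f(\cdot,t):t\in\NN)\|_{\ell^p(\ZZ)}$, whereas the left-hand side is comparable (after taking $\ell^{p,\infty}$) to a single pointwise jump size.

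First, I would choose $f_N$ so that for each $x$ in a set of cardinality $\sim N$ the function $f_N(x,\cdot)$ is a ``local oscillation pattern'' whose pointwise jump norm $\sup_\lambda \lambda N_\lambda(f_N(x,\cdot))^{1/r}$ is bounded \emph{below} by a positive constant independent of $x$; a natural candidate is a small cluster of up-down spikes of fixed height, shifted by a parameter $t_x$ depending on $x$. This immediately yields $\mathrm{LHS}(f_N) \gtrsim N^{1/p}$ by estimating the weak-$\ell^{p,\infty}$ norm of a vector that takes a uniform value on a set of cardinality $N$.

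Next I would arrange the shifts $t_x$ (or a more elaborate internal structure of each $f_N(x,\cdot)$) so that the \emph{optimal} sequence $I$ giving maximal $O^\rho_{I,\infty}(f_N(x,\cdot))$ depends strongly on $x$ in a mutually incompatible way. The target is that for every single $I\in\mathfrak S_\infty(\NN)$ the set of indices $x$ for which $O^\rho_{I,\infty}(f_N(x,\cdot))$ is essentially maximal has cardinality $o(N)$, so that $\sup_I \|O^\rho_{I,\infty}\|_{\ell^p(\ZZ)}=o(N^{1/p})$ while $\|\sup_I O^\rho_{I,\infty}\|_{\ell^p(\ZZ)}\simeq N^{1/p}$. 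Comparing these two sides then gives $\mathrm{LHS}(f_N)/\mathrm{RHS}(f_N)\to\infty$ as $N\to\infty$, which is exactly the required non-existence of a uniform constant in \eqref{est21}.

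The main technical obstacle is realising step two concretely: to produce oscillation patterns whose best-$I$ requirements genuinely \emph{conflict} across different indices $x$, rather than being just shifted copies of one another (which an adversarial $I$ with enough points could capture simultaneously). The natural mechanism is to build $f_N(x,\cdot)$ from a pair of opposite-sign spikes at consecutive integers $t_x,t_x+1$: the optimal $I$ for $x$ must contain $t_x$ but \emph{not} $t_x+1$, while for an index $y$ with $t_y=t_x+1$ the optimal $I$ must contain $t_x+1$ but \emph{not} $t_x$. Choosing $t_x=x$ (or an analogous dense pattern) produces an incompatibility graph where any fixed $I$ can satisfy only a limited fraction of constraints; calibrating the density of these conflicts against the required gap between $N^{1/p}$ and $o(N^{1/p})$ is the delicate point. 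Once the construction is fixed, the rest is a direct computation of $N_\lambda$ on each column together with a case analysis of how $I$ intersects the conflict graph, yielding the desired blow-up.
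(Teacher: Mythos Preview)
Your reduction to disproving \eqref{est21} is the right move, and once that is done the three inequalities in the corollary fall immediately via the pointwise bounds \eqref{eq:72} and \eqref{eq:200}. Your diagnosis of the mechanism --- the supremum over $I$ sitting outside the $\ell^p$ norm --- is also correct. But the construction you propose does not produce a counterexample. Take $f_N(x,t)=\ind{t=x}-\ind{t=x+1}$ for $x\in[N]$ and test the single sequence $I=(0,2,4,\ldots)$. For even $x$ the interval $[x,x+2)$ contains both spikes and gives $O_{I,\infty}^{\rho}(f_N(x,\cdot))=2$; for odd $x$ the two adjacent intervals each contribute $1$ and give $O_{I,\infty}^{\rho}=2^{1/\rho}$. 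Either way the oscillation is $\gtrsim 1$ for every $x$, hence $\sup_I\|O_{I,\infty}^{\rho}\|_{\ell^p}\gtrsim N^{1/p}$, the same order as your lower bound on the left-hand side. The conflict you set up is real but far too mild: violating the preferred constraint for a given $x$ only costs a constant factor in its oscillation, never drives it to $o(1)$. In fact already the two-term sequence $I=(0,M)$ with $M>N+1$ gives $O_{I,\infty}^{\rho}(f_N(x,\cdot))=\sup_{t<M}|f_N(x,t)|=1$ for every $x\in[N]$, so no arrangement of $O(1)$ spikes per column can separate the two sides.

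The paper's construction is genuinely multi-scale, and this is not optional. One works on a weighted space $X=\{(2^k,n):k\in\NN,\,n\in\ZZ_+\}$ with $\mu(\{(2^k,n)\})=2^k$; at level $k$ the function $f_{j,M}(2^k,n,\cdot)$ is a piecewise-linear sawtooth of slope $\pm 2^{-k}$ supported in an interval of length $2^{k+j}$ inside $[0,2^{M+j}]$. The decisive step is a budget argument (Lemma~\ref{lem:2}): for a fixed $I$ to register $\ge 2^W$ sequential jumps of height $2^{-N}$ at a point of level $k\ge N$, it must contain $\gtrsim 2^W$ consecutive pairs with spacing $\ge 2^{k-N}$, and summing these length constraints against the total available length $2^{M+j}$ caps the measure of the super-level set uniformly in $I$. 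After integrating (Lemma~\ref{lem:3}) one finds that the ratio of the two sides of \eqref{est21} can be made arbitrarily large by taking $M$ enormous relative to $j$ and then $j\to\infty$. That gain comes from the interaction of many scales and has no analogue in a single-scale spike picture.
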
 

\begin{proof}
The result is a simple consequence of Lemma~\ref{lem:1} and inequalities \eqref{eq:72}, \eqref{eq:200}.
\end{proof}

We will produce a counterexample to \eqref{est21} on a bit different space than $\ZZ$ equipped with the counting measure. However, our counterexample can be easily  transferred to the set of integers  $\ZZ$. 

Let $X:=\{ (2^k,n) : k \in \NN, \, n \in \ZZ_+ \}$ be equipped with the measure $\mu$ given by 
\begin{align*}
\mu(\{(2^k,n)\}) := 2^k, \qquad k \in \NN, \quad n \in \ZZ_+.
\end{align*}
For $j,M \in \ZZ_+$ we define a sequence $f_{j,M} \colon X \times \NN \to [0,1]$ as follows. If $k\in\NN_{\le M}$, $n \in [2^{M-k}]$ and $t \in \NN$ we set
\begin{align*}
&f_{j,M} (2^k,n, t):
 =
\sum_{0 \le m < 2^{j-1} } \big( t-2^k((n-1) 2^{j} +2m) \big) 2^{-k} 
\mathds{1}_{\big[ 2^k((n-1) 2^{j} + 2m),\,\, 2^k((n-1) 2^{j} + 2m+1) \big) } (t) \\
&\qquad +\sum_{0 \le m < 2^{j-1} } \big( -t + 2^k((n-1) 2^{j} + (2m+2)) \big) 2^{-k}\mathds{1}_{\big[2^k((n-1) 2^{j} + (2m+1)),\,\, 2^k((n-1) 2^{j} + (2m+2)) \big) } (t).
\end{align*}
Further, 
we put $f_{j,M} (2^k,n,t) = 0$ if $k\in\NN_{>M}$ or $k\in\NN_{\le M}$ but $n\in \NN_{>2^{M-k}}$.

Let us observe that for every $j,M \in \ZZ_+$ and any sequence $\bar{I}=(I_n:n\in\NN)$ we have
\begin{align} \label{est22}
\support N_{2^{-N}} f_{j,M} 
& \subseteq \big\{ (2^k,n) : k\in\NN_{\le M}, \,  n\in [2^{M-k}] \big\}, \quad N \in \NN, \\ \label{est19}
N_{2^{-N} } f_{j,M} (x) 
& =
N_{2^{-M}} f_{j,M} (x), \qquad N \in \NN_{\ge M}, \quad x \in X, \\ \label{est23}
N_{2^{-N}, \bar{I}} f_{j,M} (x) 
& =
N_{2^{-M}, \bar{I}} f_{j,M} (x), \quad N \in \NN_{\ge M}, \quad x \in X, \\ \label{est24}
N_{2^{-N}} f_{j,M} (2^k,n) 
& =
\begin{cases}
2^{j+k},  
& \quad
k\in\NN_{\le N}, \\
2^{j+N},  
& \quad
k\in \NN_{\le M}\setminus \NN_{< N}, \\
\end{cases}
\qquad N\in\NN_{\le M}, \quad  n\in [2^{M-k}], \\ \label{est25}
N_{2^{-N},\bar{I} } f_{j,M} (x) 
& \le 
N_{2^{-N}} f_{j,M} (x) \le 2^{j+N}, \quad N \in \NN, \quad x \in X.
\end{align}

The next lemma will be a key ingredient in the proof 
of Lemma~\ref{lem:1}, allowing to control the right-hand side of \eqref{est21} from above. The latter estimates after appropriate choice of the parameters $j, M$ will lead us to the desired conclusion in Lemma \ref{lem:1}. 

\begin{lemma} \label{lem:2}
For $j,M \in \ZZ_+$,  $N\in \NN_{\le M}$ and $W\in \NN_{\le j+N}$ one has 
\begin{align} \label{est26}
\begin{split}
&\sup_{\bar{I}\in\mathfrak S_\infty (\NN)} 
\mu\big(\{ x \in X : N_{2^{-N}, \bar{I}} f_{j,M} (x) \ge 2^W
\}\big) \\
&\qquad\qquad  \le 
2^{M+j+2+N-W} + (N+1)2^M
+ (M+1) 2^M\mathds{1}_{\{0\}}(W).
\end{split}
\end{align}
\end{lemma}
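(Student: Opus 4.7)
The plan is to bound the measure on the left-hand side by decomposing $X$ according to the value of $k$ in $x=(2^k,n)$ and applying a Markov-type argument. First observe that when $W=0$ the right-hand side already contains the term $(M+1)2^M\mathds{1}_{\{0\}}(W)=(M+1)2^M$, which is precisely the total $\mu$-measure of the support of $f_{j,M}$ in $X$ (by the direct computation $\sum_{k=0}^M 2^{M-k}\cdot 2^k=(M+1)2^M$), so the estimate is trivial in this case.

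For $W\ge 1$, split the measure into the two regions $\{k\le N\}$ and $\{k> N\}$. The first region has $\mu$-measure $\sum_{k=0}^N 2^{M-k}\cdot 2^k=(N+1)2^M$, which absorbs the corresponding term on the right-hand side of the claim. For the second region, the crucial geometric observation is that whenever $x=(2^k,n)$ with $k>N$ and $\sup_{I_i\le t<I_{i+1}}|f_{j,M}(2^k,n,t)-f_{j,M}(2^k,n,I_i)|\ge 2^{-N}$, the length $L_i:=I_{i+1}-I_i$ must satisfy $L_i\ge 2^{k-N}$, because $f_{j,M}(2^k,n,\cdot)$ has slope $\pm 2^{-k}$ on its support.

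By Markov's inequality it now suffices to show
\[
    \sum_{k>N}\sum_{n\in [2^{M-k}]} 2^k\, N_{2^{-N},\bar I}\big(f_{j,M}(2^k,n,\cdot)\big) \;\lesssim\; 2^{M+j+N+2}.
\]
I would interchange the triple sum, viewing the left side as a weighted count of good pairs $(i,x)$, so that the problem reduces to estimating $\sum_i c_i$, where $c_i:=\sum_{k>N,\,n} 2^k\mathds{1}(i\text{ is good for }(2^k,n))$. For a fixed index $i$ of length $L_i\in[2^s,2^{s+1})$, the good levels are $k\in\{N+1,\dots,s+N\}$, and at each such level the interval $[I_i,I_{i+1})$ intersects at most $L_i/2^{k+j}+2$ supports at that level (since the level-$k$ supports are disjoint intervals of length $2^{k+j}$). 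Grouping the indices $i$ by the dyadic scale of $L_i$ and using that, outside an interval of length at most $2^{M+j}$ the function $f_{j,M}$ vanishes (so effectively $\sum_i L_i\le 2^{M+j}$), the desired bound should follow after a careful summation together with the pigeonhole $\sum_s B_s 2^s\le 2^{M+j}$, where $B_s:=\#\{i: L_i\in[2^s,2^{s+1})\}$.

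The main obstacle is precisely this final double summation. A naive execution picks up an extraneous factor of order $M-N$ (or $\log L_i$) from summing $2^k\cdot L_i/2^{k+j}=L_i/2^j$ over the relevant range of $k$ for each fixed $i$, producing a $(M+j)2^M$ correction rather than a multiple of $(N+1)2^M$. Eliminating this factor, which is essential to reach the sharp main term $2^{M+j+N+2-W}$, appears to require a more refined combinatorial or telescoping argument---for instance, by observing that contributions of a single long interval to different levels $k$ are highly correlated, or by separating the two regimes $L_i\ge 2^{k+j}$ and $L_i<2^{k+j}$ and exploiting cancellations between them.
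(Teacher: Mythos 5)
Your treatment of the case $W=0$ and of the levels $k\le N$ is correct and matches the paper, as is the key geometric observation that a detected jump of height $2^{-N}$ at level $k$ forces a gap $I_{i+1}-I_i\ge 2^{k-N}$ (since the slope is $\pm2^{-k}$). The gap is in the reduction you make next: passing to Markov's inequality, i.e.\ trying to prove the first-moment bound
\[
\sup_{\bar I}\sum_{k>N}\sum_{n\in[2^{M-k}]}2^k\,N_{2^{-N},\bar I}\big(f_{j,M}(2^k,n,\cdot)\big)\lesssim 2^{M+j+N},
\]
is not merely technically awkward --- this intermediate estimate is false in general, so no refinement of your final double summation can rescue it. By letting $\bar I$ use gaps of length $\approx 2^{k-N}$ on different portions of $[0,2^{M+j}]$ for different levels $k$, one can arrange $\mu(\{N_{2^{-N},\bar I}f_{j,M}\ge 2^W\})\gtrsim 2^{M+j+N-W}$ for about $\log_2(M+1)$ values of $W$ simultaneously, and the layer-cake formula then makes the first moment at least of order $\log_2(M+1)\,2^{M+j+N}$; this is consistent with Lemma \ref{lem:3}, whose $q=1$ bound carries the unavoidable factor $j+N$. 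The underlying reason is that Markov's inequality uses only that each point of the superlevel set contributes \emph{one} long gap, whereas the statement needs the full multiplicity: each $x$ with $N_{2^{-N},\bar I}f_{j,M}(x)\ge 2^W$ forces at least $2^W-1$ \emph{distinct} gaps of length $\ge 2^{k-N}$ in $\bar I$, and it is exactly this factor $2^W$ that produces the decay $2^{-W}$ in the main term.

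The paper's proof keeps this multiplicity and never takes a first moment. With $a_k:=\#\{(2^k,n):N_{2^{-N},\bar I}f_{j,M}(2^k,n)\ge 2^W\}$, it sets $b_k:=\max_{k\le m\le M}(2^W-1)a_m$ (so $b_k$ is non-increasing and $\bar I$ contains at least $b_k$ consecutive pairs with gap $\ge 2^{k-N}$), and telescopes:
\[
2^{M+j}=\sum_l(I_{l+1}-I_l)\ \ge\ \sum_{k=N}^M 2^{k-N}(b_k-b_{k+1})\ \ge\ 2^{W-1}\sum_{k=N+1}^M 2^{k-1-N}a_k,
\]
which yields $\sum_{k>N}2^ka_k\le 2^{M+j+2+N-W}$ directly. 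The monotone rearrangement $b_k$ is precisely the device that resolves your worry about a single long gap being counted at several levels $k$: the telescoped sum only charges each gap once, at the largest level for which it is long. I would replace the Markov step by this argument.
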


\begin{proof}
Observe first that the case of $W = 0$ is trivial because by using \eqref{est25} and \eqref{est22} we see that the left-hand side of \eqref{est26} is controlled by
\begin{align} \label{est31}
\mu \big( \big\{ (2^k,n) : k\in\NN_{\le M}, \,  n\in [2^{M-k}] \big\} \big)
= (M+1) 2^M.
\end{align}
Thus from now on we may assume that $W \ge 1$ and consequently $2^W \ge 2$.

Let us fix $\bar{I}\in\mathfrak S_\infty (\NN)$.
For $k\in\NN_{\le M}$ let us define $A_k := \big\{ (2^k,n) : N_{2^{-N}, \bar{I}} f_{j,M} (2^k,n) \ge 2^W \big\}$ and $a_k := \# A_k$. The number $a_k$ means that the sequence $\bar{I}$ detects at least $2^W$ jumps of height at least $2^{-N}$ on $a_k$ elements in  the set $\{ (2^k,n) : n\in [2^{M-k}] \}$. Note that
\begin{align} \label{est28}
\mu\big( \{x\in X : N_{2^{-N}, \bar{I}} f_{j,M} (x) \ge 2^W
\}\big) 
= \sum_{k\in\NN_{\le M}} 2^k a_k.
\end{align}
Observe that $\support f_{j,M} (2^k,n, \cdot) \subseteq [0,2^{M+j}]$ and consequently without any loss of generality we may assume that $\bar{I}$ has a finite length with the first term equal to $0$ and the last term  equal to $2^{M+j}$. This shows that
\begin{align} \label{est27}
2^{M+j} = \sum_{l} (I_{l+1} - I_{l}).
\end{align}
Let us assume that $k\in \NN_{\le M}\setminus \NN_{< N}$. Observe that for every element from $A_k$ there exist at least $2^W$ jumps of height at least $2^{-N}$, which are detected by the sequence $\bar{I}$. Hence, for each element of $A_k$ we can always find  $2^W - 1\ge1$ distinct jumps. Furthermore, the existence of  a jump implies the existence of two consecutive terms of $\bar{I}$, say $I_l$ and $I_{l+1}$, satisfying $I_{l+1} - I_l \ge 2^{k-N}$. Consequently, we see that for every $k\in \NN_{\le M}\setminus \NN_{< N}$ there exist $(2^W - 1) a_k$ pairs of consecutive terms of $\bar{I}$ whose difference is at least $2^{k-N}$. Setting $b_{M+1} = 0$ and 
$b_k = \max_{k \le m \le M} (2^W - 1) a_m$ for $N \le k \le M$, we see that the sequence $b_k$ is non-increasing and there are $b_k$ pairs of consecutive terms of $\bar{I}$ whose difference is at least $2^{k-N}$. Thus one obtains 
\begin{align*}
\sum_{l} (I_{l+1} - I_l) 
\ge 
\sum_{k=N}^M 2^{k-N} \big( b_k - b_{k+1} \big),
\end{align*}
as the pairs that were counted at levels $ \ge k+1$ are not counted at level $k$.
This, together with \eqref{est27}, implies
\begin{align*}
2^{M+j}
& \ge 
\sum_{k=N}^M 2^{k-N} b_k
-
\sum_{k=N+1}^M 2^{k-N-1} b_k=
b_N + \sum_{k=N+1}^M 2^{k-N-1} b_k\ge 
2^{W-1} \sum_{k=N+1}^M 2^{k-1-N} a_k,
\end{align*}
where in the last inequality we have used the fact that 
$b_k \ge (2^{W} - 1) a_k \ge 2^{W-1} a_k$.
Combining this with \eqref{est28} and the trivial bound $a_k \le 2^{M-k}$ we obtain
\begin{align*} 
\mu\big( \{x\in X : N_{2^{-N}, \bar{I}} f_{j,M} (x) \ge 2^W\}
\big)
\le  \sum_{k\in \NN_{\le M}} 2^k a_k
\le 
2^{M+j+2+N-W} + (N+1)2^M,
\qquad W \ge 1.
\end{align*}
This completes the proof of Lemma~\ref{lem:2}.
\end{proof}
A simple consequence of Lemma~\ref{lem:2} is the following useful estimate, which will be used in the proof of Lemma~\ref{lem:1} only with $q \ge 1$.
\begin{lemma} \label{lem:3}
Let $q \in \RR_+$ be fixed. Then
\begin{align*}
\sup_{\bar{I}\in\mathfrak S_\infty (\NN) }  \int_X \big( N_{2^{-N}, \bar{I}} f_{j,M} (x) \big)^q \, {\rm d}\mu(x)
\lesssim
(M+1)2^M + 
\begin{cases}
(N+1) 2^{M + (j+N)q },  & \quad q > 1, \\
(j + N) 2^{M + j + N }, & \quad q = 1, \\
2^{M + j + N },  & \quad 0 < q < 1, 
\end{cases}
\end{align*}
uniformly in $j,M \in \ZZ_+$ and $N\in\NN_{\le M}$.
\end{lemma}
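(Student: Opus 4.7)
The plan is to apply the layer cake (dyadic distribution function) decomposition to the $L^q$ integral and then invoke Lemma~\ref{lem:2} to control each level set. I would set $X := N_{2^{-N}, \bar I} f_{j,M}$ and record from \eqref{est25} and \eqref{est22} that $X$ is a non-negative integer-valued function with $\|X\|_{L^\infty} \le 2^{j+N}$ and $\mu(\support X) \le (M+1)2^M$. Writing $\mu_W := \mu(\{x \in X : X(x) \ge 2^W\})$, the standard dyadic form of the layer cake formula gives
\begin{equation*}
\int_X X^q \, d\mu \lesssim_q \mu_0 + \sum_{W=1}^{j+N} 2^{Wq} \mu_W,
\end{equation*}
where the upper summation limit comes from $\mu_W = 0$ for $W > j+N$.

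Next I would insert the bound $\mu_W \le 2^{M+j+N+2-W} + (N+1)2^M$ for $1 \le W \le j+N$ from Lemma~\ref{lem:2} (after taking the supremum over $\bar I$), together with the trivial estimate $\mu_0 \le (M+1)2^M$ coming from \eqref{est22}. Substitution produces
\begin{equation*}
\sup_{\bar I \in \mathfrak S_\infty(\NN)} \int_X X^q \, d\mu \lesssim_q (M+1)2^M + 2^{M+j+N} \sum_{W=1}^{j+N} 2^{W(q-1)} + (N+1) 2^M \sum_{W=1}^{j+N} 2^{Wq}.
\end{equation*}

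From here I would split into three cases according to the sign of $q-1$, each reducing to a routine geometric series computation. For $q > 1$, both sums are dominated by their last terms $\sim 2^{(j+N)(q-1)}$ and $\sim 2^{(j+N)q}$, which combine to $(N+1) 2^{M+(j+N)q}$ (the stray term $2^{M+(j+N)q}$ is absorbed). For $q = 1$, the first sum is arithmetic of length $j+N$ and contributes $(j+N) 2^{M+j+N}$, which dominates the second. For $0 < q < 1$, the first sum converges to a $q$-dependent constant, producing the $2^{M+j+N}$ piece, while the second is $\sim (N+1)2^{M+(j+N)q}$.

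I expect the main technical subtlety to lie in the sub-critical case $0 < q < 1$, where the residual contribution $(N+1) 2^{M+(j+N)q}$ has to be absorbed into $(M+1)2^M + 2^{M+j+N}$. This calls for a more delicate balancing between the Lemma~\ref{lem:2} bound and the trivial support estimate $\mu_W \le (M+1)2^M$, switching between them at the dyadic level $2^{W} \sim 2^{j+N}/(M+1)$ where they cross. The cases $q \ge 1$ --- which are the only ones actually invoked in the proof of Lemma~\ref{lem:1} --- are immediate from the geometric sum computations above.
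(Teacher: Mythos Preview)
Your proposal is correct and follows essentially the same route as the paper: dyadic layer cake over the levels $W\in\NN_{\le j+N}$, application of Lemma~\ref{lem:2} on each level, and summation of the resulting geometric series according to the sign of $q-1$.

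The one place you diverge is the $0<q<1$ case, where you anticipate needing a ``more delicate balancing'' and a crossover argument. In fact no such refinement is needed: the residual term $(N+1)2^{M+(j+N)q}$ is absorbed directly into $2^{M+j+N}$. Indeed, since $j\ge 1$ we have $j+N\ge N+1$, so
\[
(N+1)\,2^{(j+N)q-(j+N)}=(N+1)\,2^{-(j+N)(1-q)}\le (N+1)\,2^{-(N+1)(1-q)}\le \sup_{x\ge 1} x\,2^{-x(1-q)}=:C_q<\infty,
\]
and hence $(N+1)2^{M+(j+N)q}\le C_q\,2^{M+j+N}$. The paper simply writes ``This gives the desired estimates'' at this point; the absorption is immediate once you remember the implicit constant may depend on $q$.
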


\begin{proof}
Using \eqref{est25} and then Lemma~\ref{lem:2} we see that
\begin{align*}
& \int_X \big( N_{2^{-N}, \bar{I}} f_{j,M} (x) \big)^q \, {\rm d}\mu(x) \\
& \quad =
\sum_{W\in\NN_{\le j+N}} 
\int_{ \{ x \in X : N_{2^{-N}, \bar{I}} f_{j,M} (x) \in [2^W, 2^{W+1} )
\} } 
\big( N_{2^{-N}, \bar{I}} f_{j,M} (x) \big)^q \, {\rm d}\mu(x) \\
& \quad \lesssim
\sum_{W\in\NN_{\le j+N}}  2^{Wq} 
\mu \big( \{x \in X : N_{2^{-N}, \bar{I}} f_{j,M} (x) \ge 2^W \}\big) \\
& \quad \lesssim
\sum_{W\in\NN_{\le j+N}}  2^{Wq} 
\big( 2^{M+j+N-W} + (N+1)2^M + (M+1)2^M\mathds{1}_{\{0\}}(W)\big) \\
& \quad \simeq
(M+1) 2^M + (N+1)2^{M+ (j + N)q } + 
2^{M+ j + N }
\begin{cases}
2^{(j+N)(q-1)},&\quad q > 1, \\
j+N,&\quad q = 1, \\
1,&\quad 0 < q < 1. \\
\end{cases}
\end{align*}
This gives the desired estimates.
\end{proof}
Now we are able to prove Lemma~\ref{lem:1}.

\begin{proof}[Proof of Lemma~\ref{lem:1}]
At first we deal with the left-hand side of \eqref{est21}. Let us denote
\begin{align*} 
L_{j,M} := 
\sup_{\lambda > 0} \| \lambda (N_{\lambda}f_{j,M})^{1/r} \|_{L^{p,\infty}(X)},
\qquad j,M \in \ZZ_+. 
\end{align*}
By changing the variable $\alpha \mapsto \lambda a^{1/r}$ we obtain
\begin{align*}
L_{j,M} 
& = 
\sup_{\alpha, \lambda > 0} \alpha 
\mu\big(\{ (2^k,n) : \lambda  \big( N_{\lambda} f_{j,M} (2^k,n) \big)^{1/r} \ge \alpha\}
\big)^{1/p} \\
& =
\sup_{a, \lambda > 0} \lambda a^{1/r}
\mu\big(\{ (2^k,n) :  N_{\lambda} f_{j,M} (2^k,n)  \ge a
\}\big)^{1/p} \\
& \simeq
\sup_{N \in \NN} \sup_{a > 0} 2^{-N} a^{1/r}
\mu\big( \{(2^k,n) : N_{2^{-N}} f_{j,M} (2^k,n) \ge a\}
\big)^{1/p}.
\end{align*}
Further, using \eqref{est19} and \eqref{est24} we get
\begin{align} \nonumber
L_{j,M} 
& \simeq
\sup_{N\in\NN_{\le M} } \sup_{a > 0} 2^{-N} a^{1/r}
\mu\big(\{ (2^k,n) : N_{2^{-N}} f_{j,M} (2^k,n) \ge a\}
\big)^{1/p} \\ \nonumber
& =
\sup_{N\in\NN_{\le M} } \sup_{l\in\NN_{ \le N}} 2^{-N + (j+l)/r} 
\mu\big(\{ (2^k,n) : N_{2^{-N}} f_{j,M} (2^k,n) \ge 2^{j+l}\}
\big)^{1/p} \\  \nonumber
& =
\sup_{N\in\NN_{\le M}} \sup_{l\in\NN_{ \le N}} 2^{-N + (j+l)/r} 
\big( 2^M (M+1-l) \big)^{1/p} \\  \label{est30}
& =
2^{ j/r + M/p } (M+1)^{1/p}.
\end{align}

We now focus on the right-hand side of \eqref{est21}.
Let
\begin{align*}
R_{j,M} := 
\sup_{\bar{I}\in\mathfrak S_\infty (\NN)} 
 \| O_{\bar{I},\infty}^{\rho}(f_{j,M}(\cdot,t): t\in\NN) \|_{L^{p}(X)}, 
\qquad j,M \in \ZZ_+.
\end{align*}
Since $|f_{j,M} (x,t) - f_{j,M} (x,s)| \ge 2^{-M}$, $x \in X$, $s,t \in \NN$, provided that $f_{j,M} (x,t) \ne f_{j,M} (x,s)$, we obtain
\begin{align*}
\big( O_{\bar{I},\infty}^{\rho}( f_{j,M}(x,t): t\in\NN) \big)^\rho
& =
\sum_{N \in [M+1]} \sum_{k=0}^\infty \sup_{ I_k\le t< I_{k+1}} 
|f_{j,M} (x,t) - f_{j,M} (x,I_k)|^{\rho} \\
& \qquad \qquad \qquad \times
\mathds{1}_{\big\{ \sup_{ I_k\le t< I_{k+1}} 
|f_{j,M} (x,t) - f_{j,M} (x,I_k)| \in (2^{-N}, 2^{-N + 1} ]\big\} } \\
& \le 
\sum_{N \in [M+1]} 2^{-N\rho + \rho } N_{2^{-N}, \bar{I}} f_{j,M} (x).
\end{align*}
Using \eqref{est23} we see that the terms corresponding to $N=M+1$ and $N=M$ are comparable and therefore we get 
\begin{align} \label{est29}
O_{\bar{I},\infty}^{\rho}(f_{j,M}(x,t):t\in\NN) 
\lesssim
\Big( \sum_{N \in [M]} 2^{-N\rho} N_{2^{-N}, \bar{I}} f_{j,M} (x) \Big)^{1/\rho}, 
\end{align}
uniformly in $j,M \in \ZZ_+$, $x \in X$ and any sequence $\bar{I}\in\mathfrak{S}_{\infty}(\NN)$.

In what follows we distinguish three cases.

\paragraph{\bf Case 1:} $p = \rho$. 
Using \eqref{est29} and then Lemma~\ref{lem:3} (with $q = 1$) we arrive at
\begin{align*} 
(R_{j,M})^p
& \lesssim
\sup_{\bar{I}\in\mathfrak{S}_{\infty}(\NN)} 
\sum_{N \in [M]} 2^{-N\rho} \int_X N_{2^{-N}, \bar{I}} f_{j,M} (x) \, {\rm d}\mu(x) \\
& \lesssim
\sum_{N \in [M]} 2^{-N\rho} \Big( (M+1) 2^M + (j+N) 2^{M+j+N} \Big) \\
& \simeq 
(M+1) 2^M + j 2^{M+j}.
\end{align*}
This implies
\begin{align*} 
R_{j,M}
\lesssim
(M+1)^{1/p} 2^{M/p} + j^{1/p} 2^{(M+j)/p}, \qquad j,M \in \ZZ_+.
\end{align*}
Consequently, if \eqref{est21} were true, then combining the above estimate with \eqref{est30} we would have
\begin{align*} 
2^{j/r + M/p} (M+1)^{1/p}
\lesssim
(M+1)^{1/p} 2^{M/p} + j^{1/p} 2^{(M+j)/p}, \qquad j,M \in \ZZ_+,
\end{align*}
which is equivalent to
\begin{align*} 
2^{j/r} 
\lesssim
1 + j^{1/p} 2^{j/p} (M+1)^{-1/p}, \qquad j,M \in \ZZ_+.
\end{align*}
Taking $M = 2^{100j}$, we see that
\begin{align*} 
2^{j/r} 
\lesssim
1 + j^{1/p} 2^{-99j/p}, \qquad j \in \ZZ_+.
\end{align*}
Letting $j \to \infty$ we get the contradiction. This finishes the proof of Case 1.

\paragraph{\bf Case 2:} $p > \rho$.
Using \eqref{est29}, the triangle inequality for the $L^{p/\rho} (X)$ norm and Lemma~\ref{lem:3} (with $q = p/\rho$) we obtain
\begin{align*} 
R_{j,M}
& \lesssim
\sup_{\bar{I}\in\mathfrak{S}_{\infty}(\NN)} 
\Big\| \sum_{N \in [M]} 2^{-N\rho} N_{2^{-N}, \bar{I}} f_{j,M} 
\Big\|_{ L^{p/\rho} (X) }^{1/\rho} \\
& \le 
\sup_{\bar{I}\in\mathfrak{S}_{\infty}(\NN)}  \Big(
\sum_{N \in [M]} 2^{-N\rho} \| N_{2^{-N}, \bar{I}} f_{j,M} 
\|_{ L^{p/\rho} (X) } \Big)^{1/\rho} \\
& \lesssim
\Big( 
\sum_{N \in [M]} 2^{-N\rho} \Big( (M+1) 2^M + (N+1) 2^{M+(j+N)p/\rho} \Big)^{\rho/p}  \Big)^{1/\rho} \\
& \simeq 
\Big( 
(M+1)^{\rho/p} 2^{M\rho/p} +  2^{M\rho/p + j}  \Big)^{1/\rho} \\
& \simeq 
(M+1)^{1/p} 2^{M/p} +  2^{M/p + j/\rho}.
\end{align*}
Therefore, if \eqref{est21} were true, then the above estimate together with \eqref{est30} would lead us to the estimate
\begin{align*} 
2^{j/r + M/p} (M+1)^{1/p}
\lesssim
(M+1)^{1/p} 2^{M/p} +  2^{M/p + j/\rho}, \qquad j,M \in \ZZ_+,
\end{align*}
which is equivalent to
\begin{align*} 
2^{j/r} 
\lesssim
1 + 2^{j/\rho} (M+1)^{-1/p}, \qquad j,M \in \ZZ_+.
\end{align*}
Taking $M = \lfloor 2^{100jp/\rho} \rfloor$, we get
\begin{align*} 
2^{j/r} 
\lesssim
1 + 2^{-99j/\rho}, \qquad j \in \ZZ_+,
\end{align*}
which leads to the contradiction if we let $j \to \infty$. 
This finishes the proof of Case 2.

\paragraph{\bf Case 3:} $p < \rho$. 
Using \eqref{est29} and then applying the H\"older inequality (here we use \eqref{est22} and \eqref{est31} as well) and
Lemma~\ref{lem:3} (with $q = 1$) we infer that
\begin{align*} 
(R_{j,M})^p
& \lesssim
\sup_{\bar{I}\in\mathfrak{S}_{\infty}(\NN)} \int_X
\Big( \sum_{N \in [M]} 2^{-N\rho} N_{2^{-N}, \bar{I}} f_{j,M} (x) \Big)^{p/\rho} \, {\rm d}\mu(x) \\
& \le 
\sup_{\bar{I}\in\mathfrak{S}_{\infty} (\NN)}
\Big( \int_X \sum_{N \in [M]} 2^{-N\rho} N_{2^{-N}, \bar{I}} f_{j,M} (x)
\, d\mu(x) \Big)^{p/\rho} \big( (M+1) 2^M \big)^{1-p/\rho} \\
& \lesssim
\big( (M+1) 2^M \big)^{1-p/\rho}
\Big( \sum_{N \in [M]} 2^{-N\rho} \Big( (M+1) 2^M + (j+N) 2^{M+j+N} \Big) \Big)^{p/\rho} \\
& \simeq 
(M+1) 2^M + \big( (M+1) 2^M \big)^{1-p/\rho} \big( j 2^{M+j} \big)^{p/\rho}.
\end{align*}
This shows that
\begin{align*} 
R_{j,M}
& \lesssim
(M+1)^{1/p} 2^{M/p} + 2^{M/p + j/\rho} j^{1/\rho}  (M+1)^{1/p - 1/\rho}.
\end{align*}
Consequently, if \eqref{est21} were true, then combining the above estimate with \eqref{est30} we would get
\begin{align*} 
2^{j/r + M/p} (M+1)^{1/p}
\lesssim
(M+1)^{1/p} 2^{M/p} + 2^{M/p + j/\rho} j^{1/\rho}  (M+1)^{1/p - 1/\rho}, 
\qquad j,M \in \ZZ_+,
\end{align*}
which is equivalent to
\begin{align*} 
2^{j/r} 
\lesssim
1 + 2^{j/\rho} j^{1/\rho} (M+1)^{-1/\rho}, \qquad j,M \in \ZZ_+.
\end{align*}
Taking $M = 2^{100j}$, we see that
\begin{align*} 
2^{j/r} 
\lesssim
1 + j^{1/\rho} 2^{-99j/\rho}, \qquad j \in \ZZ_+.
\end{align*}
Letting $j \to \infty$ we get the contradiction. This finishes the proof of Case 3. Consequently, the proof of Lemma~\ref{lem:1} is finished.
\end{proof}

\end{document}